\newtheorem{Theorem}{Theorem}[section]
\newtheorem{TheoremA}{Theorem}
\newtheorem{Lemma}[Theorem]{Lemma}
\newtheorem{Proposition}[Theorem]{Proposition}
\newtheorem{Corollary}[Theorem]{Corollary}
\newtheorem{Remark}[Theorem]{Remark}
\newtheorem{Conclusion}[Theorem]{Conclusion}
 \definecolor{darkgreen}{rgb}{0,0.4,0}
\definecolor{light}{gray}{.9}
\newcommand{\cC}{\ensuremath{\mathcal C}}
\newcommand{\cE}{\ensuremath{\mathcal E}}
\newcommand{\cG}{\ensuremath{\mathcal G}}
\newcommand{\cL}{\ensuremath{\mathcal L}}
\newcommand{\cT}{\ensuremath{\mathcal T}}
\newcommand{\cV}{\ensuremath{\mathcal V}}
\newcommand{\bbE}{{\ensuremath{\mathbb E}} }
\newcommand{\bbN}{{\ensuremath{\mathbb N}} }
\newcommand{\bbP}{{\ensuremath{\mathbb P}} }
\newcommand{\bbR}{{\ensuremath{\mathbb R}} }
\newcommand{\bbZ}{{\ensuremath{\mathbb Z}} }
\let\a=\alpha \let\b=\beta   \let\d=\delta  \let\e=\varepsilon
 \let\g=\gamma       \let\l=\lambda
   \let\n=\nu         
\let\r=\rho  \let\s=\sigma \let\t=\tau   
\let\D=\Delta   \let\G=\Gamma  \let\L=\Lambda 
 \let\P=\Pi     
\newcommand{\ds}{\displaystyle}
\author[A.\ Faggionato]{Alessandra Faggionato}
\address{Alessandra Faggionato.
  Dipartimento di Matematica, Universit\`a di Roma `La Sapienza'
  P.le Aldo Moro 2, 00185 Roma, Italy}
\email{faggiona@mat.uniroma1.it}
\author[V. Silvestri]{Vittoria Silvestri}
\address{Vittoria Silvestri. 
DAMTP, Centre for Mathematical Sciences,
Wilberforce Road,
Cambridge,
CB3 0WA,
United Kingdom.}
\email{V.Silvestri@maths.cam.ac.uk}
\begin{document}

\begin{abstract}    We consider random walks on quasi one dimensional  lattices, as introduced in \cite{FS}. This mathematical setting covers a large class of discrete kinetic models for non--cooperative molecular motors on periodic tracks.  We derive general formulas for the asymptotic velocity and diffusion coefficient, and we  show how to reduce their computation to  suitable linear systems of the same degree of a single  fundamental cell, with possible linear chain removals. We apply the above results to special families of kinetic models, also 
catching some errors in the biophysics literature.

\noindent {\em Keywords}: Markov chain, Law of large numbers, Invariance principle, Molecular motors.

\end{abstract}

\title[Discrete kinetic models for molecular motors]{Discrete kinetic models for molecular motors: asymptotic velocity and gaussian fluctuations}

\maketitle

\section{Introduction}
Molecular motors are proteins working   inside the cell as nanomachines \cite{H}. They usually convert chemical energy coming from ATP  to produce mechanical work fundamental for cargo transport, cell division, genetic transcription, muscle contraction,... We concentrate here on the large family of  molecular motors working in a non--cooperative way and  moving along cytoskeletal filaments, which are given by polarized homogeneous polymers.

Molecular motors have been and still are object of intensive study in biology and biophysics. Two fundamental paradigms have been proposed for their modelization. In the so called \emph{Brownian ratchet} model \cite{JAP,PJA,Re} the dynamics of the molecular motor is given by a one--dimensional diffusion in a periodic but typically asymmetric potential, which can switch to a different potential at random times (switching diffusion). 
 The other paradigm, on which we concentrate here,   is given by continuous time random walks (also with non exponential waiting times)  on quasi linear graphs having a periodic structure \cite{FK1,FK2,K2,KF1,KF2,KF3}.  As in \cite{FS} we call these graphs \emph{quasi 1d lattices}, they are obtained by gluing together several copies of a fundamental cell
in a linear fashion. The geometric  complexity of the fundamental cell reflects the possible conformational transformations of the molecular motor in its mechanochemical cycle. The simplest example is given by a   random walk\footnote{If not stated  otherwise, by ``random walk'' we mean a Markovian random walk, hence  with exponential waiting times.} on $\bbZ$ with periodic jump rates (in this case the fundamental cell is given by an interval with $N$ sites, $N$ being the periodicity), while random walks on  other classes of   quasi 1d lattices (parallel--chain models and divided--pathway models)  and also with non--exponential waiting times have been studied motivated by experimental evidence of a richer structure (cf. e.g. \cite{DK1,DK2,K1,KF1}).

 Still before the study of molecular motors,  both the asymptotic velocity and  gaussian fluctuations for the  random walk on $\bbZ$ with periodic jump rates have been obtained from \cite{D} under a  suitable \emph{Ansatz}.  Generalizing the same Ansatz, formulas have been given in \cite{DK1,DK2,K1} for the asymptotic velocity and  gaussian fluctuations of  parallel--chain models and divided--pathway models.
In \cite{TF} the authors have  considered  random walks on a quasi 1d lattice and, by first--passage time arguments,   have investigated the asymptotic velocity.

 We treat random walks (also with non--exponential waiting times) on quasi 1d lattices in full generality and show that  their analysis reduces to the study of random time changes of sums of i.i.d. random variables.  Indeed,  all relevant information concerning the position of the random walk are encoded in a suitable random walk on $\bbZ$ (that  we call \emph{skeleton process}) with nearest neighbor jumps, spatially homogenous rates and  typically non--exponential holding times.   The skeleton process belongs to a larger class of stochastic processes  $(Z_t)_{t \in \bbR_+}$ obtained by 
considering a sequence $( w_i, \t_i )_{i \geq 1}$ of i.i.d.   2d vectors with values in
$\bbR \times (0,+\infty)$, defining $W_m:= \sum _{i=1}^m w_i$, $
\cT_m:=\sum_{i=1}^m \t_i$ for $m \geq 0$ integer, and setting  $Z_t:= W _{ \max \{ m\geq 0 :\, \cT_m \leq t \}}$. 
 Sums of i.i.d. random variables have many nice properties  and, assuming $\bbE(\t_i ^2)<\infty$,  random time changes behave well for  what concerns  law of large numbers  and invariance principle (while they are troublesome for what concerns    large deviations  \cite{FS}).  
Indeed, we derive the law of large numbers  and  the invariance principle
    for the above processes  $(Z_t)_{t \in \bbR_+}$ (see Appendix \ref{codina}), and   therefore Theorem \ref{zecchino} (law of large numbers  and invariance principle
 for the skeleton process) is a corollary of this general result.

We also treat computability issues. In Section \ref{marina} we   show that in the computation of the velocity one can simplify the  quasi 1d lattice by removing linear chains and substituting them with a single edge. 
A similar  reduction has been derived in \cite{TF}  by   different arguments based on 
expected occupation times.  Moreover, in Section \ref{1prigione}  we show that both the velocity $v$ and  the diffusion coefficient $\s^2$   
 can be computed by means of linear systems of  the same degree of the fundamental cell 
 of the quasi 1d lattice.
  
  Finally, we apply the above results to derive the asymptotic velocity and diffusion coefficient in specific classes of 1d quasi lattices: the $N$--periodic random walk  on $\bbZ$ \cite{D} (cf. Section \ref{esempi1}) and the parallel--chain model \cite{K1} (cf. Section \ref{esempi3}). 
Our formulas for the asymptotic velocity confirm the expressions obtained in \cite{D,K1}. Formulas for the asymptotic diffusion coefficient are rather complex.  For the $2$--periodic random walk on $\bbZ$  we show that our formula is in agreement with the expression derived in \cite{D}.  Our formula of the asymptotic diffusion coefficient   contradicts the expression derived in \cite{K1}  for the parallel--chain model by generalizing Derrida's Ansatz. In Section \ref{confronto} we consider  special parallel--chain models and explain why the expression obtained in \cite{K1} contradicts some general principle.  Our discussion has led to a revision of \cite{K1} from the author who has indicated \cite{K} that formulas (26) and (28) should be corrected by taking   the second sum in  the third  addendum of (26) and (27) starting from  $i=1$ and not from $i=0$.
We also point out that in the last sum of  (28) in \cite{K1}  the term $b_j^{(1)}$ should be replaced by $ b_i^{(1)}$. For small chains, making these  corrections, our general formulas confirm the corrected version of 
\cite{K1}.

 The random walk on $\bbZ$ with $N$--periodic rates treated in \cite{D} has been considered as a reference model  in the study of several discrete models for molecular motors. We point out   that one cannot expect that the mathematical features of this random walk are  universal. For example, as discussed in \cite{FS,FS1}, this random walk has the property that, starting from $0$, the time needed to hit the set $\{-N,N\}$ is independent from the precise site, $-N$  or $N$, visited when arriving at $\{-N,N\}$. This mathematical property, which corresponds to  a  Gallavotti--Cohen type symmetry for the  large deviations of the random walk,  is shared only inside a special class of discrete kinetic models (not  including for example the parallel-chain model or the divided-pathway  model).
 For more results  on fluctuation theorems in discrete kinetic model of molecular motors we refer the interested reader to  \cite{FS,FS1} and references therein.

\section{Random walks on quasi one dimensional lattices} \label{quasi}

In this section we define the quasi one dimensional lattices and the stochastic processes we will focus on. 

\subsection{Quasi one dimensional lattices} We start by defining  quasi 1d lattices.
Consider a finite oriented graph $G= (V,E)$, $V$ being the set of vertices and $E$ being the set of oriented edges,  $E\subset \{ (v,w)\,:\, v \not = w \text{ in } V \}$.  We fix  in $V$ two vertices $\underline{v}, \overline{v}$. 
 We assume that  the oriented graph $G$  is connected, i.e. for any $v,w \in V$ there is an oriented path in $G$ from $v $ to $w$. 
 The quasi 1d lattice $\cG$ associated to the triple $\bigl(G,\underline{v},\overline{v}\bigr)$ is the oriented graph obtained by gluing together countable copies of $G$ such that   the point $\overline{v}$ of one copy is identified with the point $\underline{v}$ of the next copy. To formalize the definition,  we define $\cG$ as $\cG=(\cV, \cE)$ with vertex set $\cV$ and edge set $\cE$ as follows
 (see Fig. \ref{pocoyo100}):
\begin{align*}
& \cV:=\left\{ v_n :=(v,n) \in (V \setminus \{\overline{v}\})\times \bbZ\right\} \,\\
& \cE :=\cE_1 \cup \cE_2 \cup \cE_3\,,
\end{align*}
where 
\begin{align*}
& \cE_1:= \left\{ (v_n, w_n)\,:\, (v,w) \in E \,,\; n \in \bbZ \right \} \,,\\
& \cE_2:= \cup _{n \in \bbZ} \left\{ (v_n, \underline{v}_{n+1} )\,:\, (v,\overline{v}) \in E \right\} \,,\\
&\cE_3:= \cup_{n \in \bbZ}  \left\{ (\underline{v}_{n+1},v_n )\,:\, (\overline{v},v) \in E \right\} \,.
\end{align*}

\begin{figure}[!ht]
    \begin{center}
     \centering
  \mbox{\hbox{
  \includegraphics[width=0.9\textwidth]{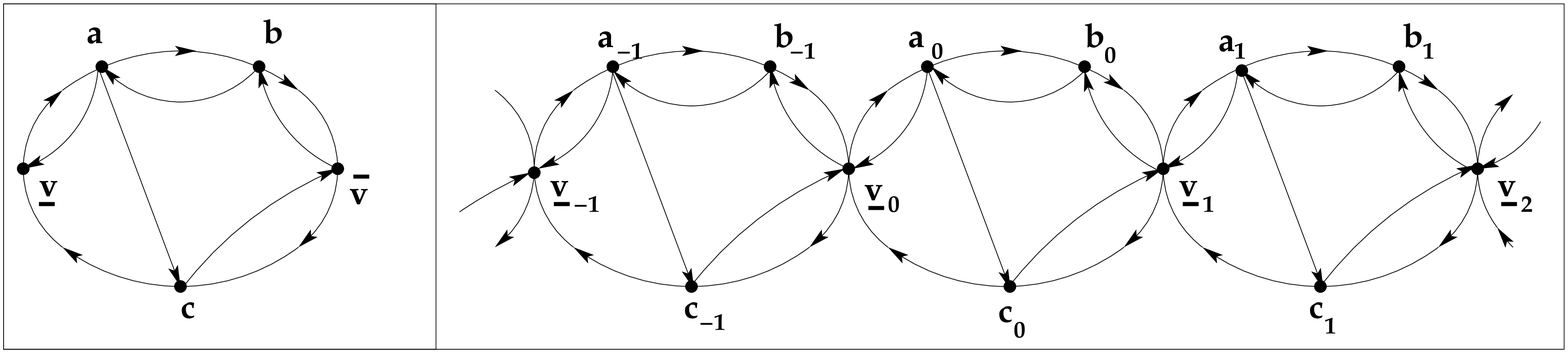}}}
            \end{center}
            \caption{The  graph $G=(V,E)$ with vertices $\underline{v}, \overline{v}$ (left) and the associated quasi 1d lattice $\cG= (\cV, \cE)$ (right) }\label{pocoyo100}
  \end{figure}
In what follows, it will be convenient to  simplify the notation setting 
\begin{equation*}
n_*:= \underline{v}_n\,, \qquad n \in \bbZ\,.
\end{equation*}

\subsection{The process  $X$ and the skeleton process $X^*$}

 Given a  quasi 1d lattice $\cG$ associated to the triple $\bigl(G,\underline{v},\overline{v}\bigr)$,  we consider  a generic stochastic process 
  $X=( X_t)_{t \in \bbR_+}$  with state space $\cV$, jumping only along the oriented edges in $\cE$, 
whose dynamical rules are $\cT$--invariant and such that, when arrived at a site $n_*$, it  looses memory of its past. We refer to   \cite{FS}[Def. 2.1] for a precise mathematical definition. The  key example  is given by a  continuous time random walk    $X=( X_t)_{t \in \bbR_+}$
    with state space $\cV$, starting at $0_*= \underline{v}_0$ and with positive  jump rates $r(x,y)$,  $(x,y) \in \cE$, invariant by cell shift:
 \begin{equation}\label{simm1}
r(x,y)= r( \cT x, \cT y)\,, \qquad \qquad \cT(v_n):= v_{n+1}\,.
 \end{equation}
In particular, the random walk $X$ waits at $x$ an exponential time with inverse mean $r(x):= \sum _{y:(x,y) \in \cE} r(x,y)$ and then   jumps to a neighboring vertex $y$ with probability $r(x,y)/r(x)$. One can consider also non exponential holding times,  as in \cite{KF1}.

  In the applications to molecular motors, 
each site  $n_*$ corresponds to a spot in the $n$--monomer of the polymeric filament where the molecular motor can bind. The other states $v_n$ correspond to intermediate conformational states that  the molecular motor
achieves in its mechanochemical transformations, which are described by jumps along edges in $\cE$.  In particular,  states  $v_n$  do not  encode only a spatial position and jumps do not necessarily correspond to spatial jumps.
\smallskip

We   now introduce  a coarse--grained version of $X$, given by the so--called \emph{skeleton process} $X^*= (X_t^*)_{t \in \bbR_+}$, which 
 has values in   $\bbZ$ and  records the last visited 
state of the form  $n_*$ up to time $t$.
More precisely, 
it is  defined as  $ X_t^*:= \Phi(X_\tau)$ where $\Phi(n_*)=n$ and 
 $$\tau := \sup \left\{ s \in [0,t]: X_t=n_* \text{ for some } n \in \bbZ\right\}$$
 (see Fig. \ref{esempietto_fig}). In the applications to molecular motors,  the skeleton process contains  all the relevant information, indeed it allows to determine the position of the molecular motor up to an error of the same order of the monomer size. 
 
 \begin{figure}[!ht]
     \begin{center}
     \centering
  \mbox{\hbox{
  \includegraphics[width=0.7\textwidth]{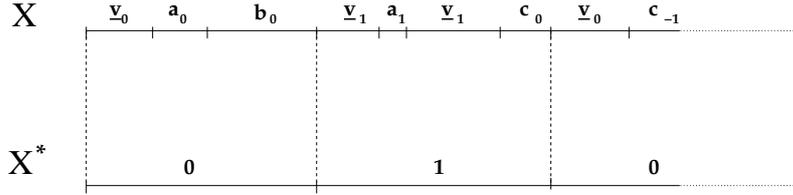}}}
         \end{center}
         \caption{Example of a trajectory  $X= (X_t)_{t \in \bbR_+}$ and the associated trajectory  $X^*= (X_t^*)_{t \in \bbR_+}$ referred to the quasi 1d lattice $\cG$ of Fig. \ref{pocoyo100}}\label{esempietto_fig}
  \end{figure}

\section{Asymptotic velocity and gaussian fluctuations}\label{panettone}
Considering the process $X$ starting at $0_*$
 let  $S$ be the random time defined as \begin{equation}\label{pietro}S
  := \inf \left\{ t \geq 0 \,:\, X_t  \in \{-1_*,1_*\}    \right \}\,.
  \end{equation}
  In what follows we assume that   $\bbE(S^2)< \infty$ (this assumption is always verified for continuous time random walks with exponential waiting times \cite{FS}[Lemma 2.2]).
 
\begin{TheoremA} \label{zecchino}
Consider the process  $( X_t )_{t \in \bbR_+}$ starting at $0_*$. Then   the skeleton process $(X^*_t )_{t \in \bbR_+}$ satisfies the following properties:
\begin{itemize}

\item[(i)] ({\sl Asymptotic velocity})  Almost surely 
\begin{equation}
\lim _{t \to \infty} \frac{X^*_t}{t}=
 \frac{  \bbP (X_S=1_*)-\bbP (X_S=-1_*)}{\bbE(S)}=:v\,.\label{trenino}
\end{equation}

\item[(ii)] ({\sl  Gaussian fluctuations})
Given $n\in \bbN $ define the rescaled process 
$$ B^{(n)} _t := \frac{ X_{ nt }^* - vnt}{\sqrt{n}}  \,, \qquad t \in \bbR_+\,,
$$ with paths in the Skohorod space\footnote{The Skohorod space   $ D( \bbR_+; \bbR)$ is given by paths from $\bbR_+$ to $\bbR$, which are  right continuous and have left limits. It is endowed with the so called Skohorod topology \cite{B}. 
Alternatively,   one could define $B_t^{(n)}$ by rescaling and linear interpolation, and replace $D( \bbR_+;\bbR) $ by  the space of continuous paths $C (\bbR_+; \bbR)$ where convergence corresponds to uniform convergence on compact intervals. The invariance principle remains true in this alternative setting.}
  $ D( \bbR_+; \bbR)$.  Then as $n\to \infty$ the rescaled process $\bigl(B^{(n)}_t\bigr)_{t \in \bbR_+}$ weakly converges to a Brownian motion on $\bbR$ with diffusion constant
\begin{equation}\label{diffondo1}
\s^2:= \frac{ {\rm Var} (X^*_S-v S)}{ \bbE (S) }\,.
\end{equation}
\end{itemize}
\end{TheoremA}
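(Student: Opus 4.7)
The plan is to exhibit the skeleton process $X^*$ as a random time change of a sum of i.i.d.\ two-dimensional vectors, exactly of the form $(Z_t)_{t\in\bbR_+}$ introduced just before the statement. Once this is done, Theorem \ref{zecchino} becomes a direct specialization of the general law of large numbers and invariance principle proved in Appendix \ref{codina}.

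I would first set $T_0:=0$ and, for $k\geq 1$, define
$$T_k:=\inf\bigl\{t>T_{k-1}\,:\,X_t\in\bigl\{(X^*_{T_{k-1}}-1)_*,\,(X^*_{T_{k-1}}+1)_*\bigr\}\bigr\},$$
together with $w_k:=X^*_{T_k}-X^*_{T_{k-1}}\in\{-1,+1\}$ and $\tau_k:=T_k-T_{k-1}$. The geometry of $\cG$ (cells glued only at the $n_*$--points) forces any path between $n_*$ and $m_*$ with $m\neq n$ to visit every intermediate $j_*$, so $T_k$ is also the $k$--th time at which $X^*$ actually changes value, and $X^*$ is constant on $[T_{k-1},T_k)$. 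By the memory--loss property of $X$ at sites $n_*$ (\cite{FS}[Def.~2.1]) combined with the cell--shift invariance \eqref{simm1}, conditionally on the past up to $T_{k-1}$ the process $(X_{T_{k-1}+t})_{t\geq 0}$ has the same law as $X$ started at $0_*$, spatially translated by $X^*_{T_{k-1}}$. Hence $(w_k,\tau_k)_{k\geq 1}$ is i.i.d.\ with common law $(X^*_S,S)$; in particular $\bbE(\tau_1^2)=\bbE(S^2)<\infty$ and $|w_1|\leq 1$.

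Setting $W_m:=\sum_{i=1}^m w_i$, $\cT_m:=\sum_{i=1}^m\tau_i$ and $N_t:=\max\{m\geq 0\,:\,\cT_m\leq t\}$, the previous step yields $X^*_t=W_{N_t}$ for every $t\geq 0$. The LLN of Appendix \ref{codina} then gives $X^*_t/t\to\bbE(w_1)/\bbE(\tau_1)$ almost surely, which—after identifying $\bbE(w_1)=\bbP(X_S=1_*)-\bbP(X_S=-1_*)$ and $\bbE(\tau_1)=\bbE(S)$—is exactly \eqref{trenino}. The corresponding invariance principle of Appendix \ref{codina} gives weak convergence of $B^{(n)}$ to a Brownian motion with diffusion constant $\var(w_1-v\tau_1)/\bbE(\tau_1)$; since $w_1-v\tau_1=X^*_S-vS$, this matches the $\s^2$ of \eqref{diffondo1}.

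The main obstacle is the regeneration step: extracting a genuine i.i.d.\ sequence $(w_k,\tau_k)_{k\geq 1}$ from the loss--of--memory at $n_*$ together with the translation invariance \eqref{simm1} requires a careful use of the precise definition of admissible dynamics in \cite{FS}[Def.~2.1]. All the remaining analytical work—Anscombe--type control of the random index $N_{nt}$, negligibility after $\sqrt{n}$--rescaling of the overshoot $\cT_{N_{nt}}-nt$, and joint convergence in the Skorohod topology via the decomposition
$$W_{N_{nt}}-vnt=\sum_{i=1}^{N_{nt}}(w_i-v\tau_i)+v\bigl(\cT_{N_{nt}}-nt\bigr),$$
—is already subsumed in Appendix \ref{codina} and need not be reproduced here.
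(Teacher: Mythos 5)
Your proposal is correct and follows essentially the same route as the paper: the paper reduces Theorem \ref{zecchino} to the general Theorem \ref{teo1} via Lemma \ref{torlonia} (which identifies the law of $X^*$ with that of the generic cumulative process $Z$), and your regeneration-time construction of the i.i.d.\ pairs $(w_k,\tau_k)\sim(X^*_S,S)$ is precisely the argument that the paper declares "very simple and therefore omitted" in the proof of Lemma \ref{torlonia}. The only cosmetic difference is that you work pathwise inside $X$ rather than building a fresh process with the same law, but the content is identical.
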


The above result is a corollary of Theorem \ref{teo1} (presented in Appendix \ref{codina} together with its proof) concerning the law of large numbers and the invariance principle for random time changes of  cumulated processes.

\begin{Remark}
Considering random walks with exponential holding times, the random variables $X_S^*$ and $S$ are independent if the fundamental graph $G$ is $(\underline{v}, \overline{v})$--minimal as defined in \cite{FS}[Section 3.2]. For such random walks (including nearest--neighbor random walks on $\bbZ$ with periodic rates) the variance in \eqref{diffondo1} becomes  ${\rm Var} (X^*_S)+v^2  {\rm Var} ( S)$.
\end{Remark}
\section{Computation of the asymptotic velocity $v$ }\label{marina}

In this section we explain how the asymptotic velocity $v$ can be obtained by solving suitable 
linear systems, whose degree order can be reduced by removal of linear chains, when restricting to  Markovian random walks (hence in this section $X$ is a random walk with exponential waiting times satisfying \ref{simm1}).
\subsection{Reduction to a 2-cells random walk}\label{sanremo} We
define $\tilde{X}:=(\tilde X_t)_{t \in \bbR_+}$ by
$ \tilde{X}_t= X_{S \wedge t}$. Simply,  $\tilde X$ differs from $X$ by the fact that the states $-1_*,1_*$ are  absorbing (recall that $\tilde{X}_0= X_0=0_*$). 
Trivially $\tilde{X}$ is again a random walk with state space given by the finite set $\tilde V:=\{ v_n \in \cV \,:\, v \in V\,, n=-1,0 \} \cup \{1_*\}$,  obtained by gluing two fundamental cells. The associated  graph $\tilde G =( \tilde V, \tilde E)$ is such that no arrows exits from $-1_*,1_*$, while for all other edges  the  jump rates are the same as for $X$ (see Fig. \ref{pocoyo2}).
\begin{figure}[!ht]    \begin{center}
     \centering
  \mbox{\hbox{
  \includegraphics[width=0.4\textwidth]{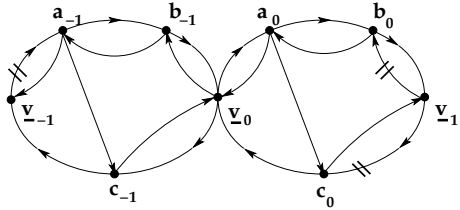}}}
            \end{center}
            \caption{Graph $\tilde G= (\tilde V, \tilde E)$ associated to the random walk $\tilde X$,  edges 
           marked with double lines have to be suppressed. For all other edges the jump rates are given by $r(\cdot, \cdot)$}\label{pocoyo2}
  \end{figure}

  We note that 
$ \bbP  (X_S=\pm 1_*) = \bbP \bigl( \tilde{X}_t = \pm 1_* \text{ eventually in } t \bigr)$.
Being a hitting probability, $ \bbP  (X_S=\pm 1_*)$ can be computed by solving the finite linear system in Theorem 3.3.1 of \cite{N}. In particular, setting 
\[ \phi(x):=  \bbP \bigl( \tilde{X}_t =   1_* \text{ eventually in } t \, | \, \tilde X_0= x\bigr)\,, \qquad x \in \tilde V\,,
\]
the family of  hitting probabilities $ \{ \phi(x) \}_{x \in \tilde V}$  is the minimal nonnegative solution of the linear system
\begin{equation}\label{febbre1}
\begin{cases}
\sum _{y \in \tilde V: (x,y) \in \tilde E}\tilde  r(x,y)\bigl[ \phi(x)- \phi(y)\bigr]=0 \,, \qquad x \in \tilde V\setminus\{-1_*,1_*\}\,,\\
\phi(-1_*)=0\,,\\
\phi(1_*)=1\,,
\end{cases}
\end{equation}
where $\tilde r(x,y):= r(x,y)$ for all $(x,y) \in \tilde E $.  Then the term $\bbP(X_S=1_*)- \bbP(X_S=-1_*)$ appearing in formula \eqref{trenino} for the asymptotic velocity equals  $2\phi(0_*)-1$.

  Moreover we note that $S$ has the same law of 
$\tilde S:= \inf \bigl\{ t \geq 0 \,:\, \tilde{X}_t  \in \{-1_*,1_*\}    \bigr \}$.
Hence, the expected value $\bbE(S)$ (appearing in formula \eqref{trenino} for the asymptotic velocity) can be computed by   solving the finite linear system in Theorem 3.3.3 of \cite{N}.  
Then, setting 
\[ \psi (x):= \bbE \bigl( \tilde S\,\big |\,  \tilde X_0=x\bigr) \,, \qquad x \in \tilde V\,,
\] we  have $\bbE(S) = \psi(0_*)$ and
the family of  expected hitting times $ \{ \psi(x) \}_{x \in \tilde V}$  is the minimal nonnegative solution of the linear system
\begin{equation}\label{febbre2}
\begin{cases}
 \sum _{ y \in \tilde V\,:\, (x,y) \in \tilde E} \tilde r(x,y)\bigl[ \psi(x)-\psi(y) \bigr]  =1  \,, \qquad x \in \tilde V\setminus\{-1_*,1_*\}\,,\\
\psi(-1_*)=\psi(1_*)=0\,.
\end{cases}
\end{equation}

\subsection{Removal of linear chains}\label{rimozione}
We present a method to reduce the dimension of the above  linear systems \eqref{febbre1} and \eqref{febbre2} by removing linear chains from the graph $\tilde G$. This removal is particularly useful for studying the asymptotic velocity of several kinetic models for molecular motors  presenting linear chains inside (cf. the periodic linear model, parallel--chain model, divided--path model discussed below).  A similar  reduction has been derived in \cite{TF}  by   different arguments based on 
expected occupation times. Propositions \ref{virulino1} and  \ref{virulino2} below  give a direct method to compute quantities that in \cite{TF} are derived by solving more linear systems.
 The method is completely general, and works for any Markov chain with absorbing states (as the one presented in Section \ref{1prigione}, anyway for simplicity we refer only to the random walk $\tilde X$ on $\tilde G$).

 We say that the sequence of  vertices  $\g=(x_0,x_1,\dots, x_n)$ in $\tilde V$  forms a linear chain in $\tilde G$  if $n\geq 2$ and,  for any $i=0,1, \dots, n-1$, 
 the only edges in $\tilde E$ exiting from $x_i$ are $(x_i, x_{i\pm 1})$ and  the only edges in $\tilde E$  entering into $x_i$ are $(x_{i\pm 1}, x_i)$ (see Fig. \ref{uovo}).  Note that we have excluded the trivial case $n=1$. The vertices $x_1, x_2, \dots, x_{n-1}$ are called \emph{intermediate vertices}, while 
  the  vertices $x_0,x_n$ are called \emph{extremal vertices} ($x_0$ initial, $x_n$ final).  Moreover, the edges $(x_i, x_{i+1
})$ and  $(x_{i+1}, x_i)$, as $i $ varies among $0,1, \dots, n-1$, are called \emph{edges of the chain}.  Note that if  the sequence $\g=(x_0,x_1,\dots, x_n)$ forms a  linear chain in $\tilde G$, then also  the inverted one, i.e. 
 $\bar \g:=(x_n,x_{n-1},\dots, x_0)$, forms a  linear chain in $\tilde G$.  
 Introducing the short notation  \begin{equation}\label{aereo}
   r_i^- := r(x_i, x_{i-1}) \,, \qquad r_i^+ := r(x_i,x_{i+1}) \,, \qquad 1\leq i \leq n-1\,,
 \end{equation}
 given a linear chain $\g$ as above we define
 \begin{align}
 \G (\g)& := 1+\sum_{i=1}^{n-1} \frac{ r_1^-}{r_1^+} \frac{ r_2^-}{r_2^+}  \cdots\frac{ r_i^-}{r_i^+}\,, \label{alexian}\\
 c(\g)& :=
  \sum_{1\leq k \leq j \leq n-1} \frac{1}{r_k^+} \frac{r_{k+1}^-}{r_{k+1}^+} \cdots \frac{r_j-}{r_j^+}  
 \,,\label{rachele}
 \end{align} 
 where we set $\frac{1}{r_k^+} \frac{r_{k+1}^-}{r_{k+1}^+} \cdots \frac{r_j-}{r_j^+}  = \frac{1}{r_k^+}$ when $k=j$.
 Note that the definition of $\G(\g)$ depends on the orientation of $\g$. Moreover, note that
  \begin{equation}\label{supersonico}\G (\bar \g) := 1+\sum_{i=1}^{n-1} 
  \frac{ r_{n-1}^+}{r_{n-1}^-} \frac{ r_{n-2}^+}{ r_{n-2}^-}  \cdots 
 \frac{ r _{n-i}^+}{r_{n-i}^-} = \G(\g) \Big( \frac{ r_1^-}{r_1^+} \frac{ r_2^-}{r_2^+}  \cdots\frac{ r_{n-1}^-}{r_{n-1}^+}\Big)^{-1}\,.
 \end{equation}
 
 \begin{figure}[!ht]
    \begin{center}
     \centering
  \mbox{\hbox{
  \includegraphics[width=0.5\textwidth]{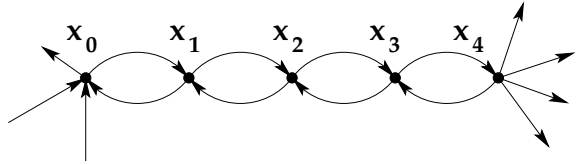}}}
            \end{center}
            \caption{A linear chain in $\tilde G$ with $n=4$.}\label{uovo}
  \end{figure}

  Fix a family $\cC$ of linear chains  and set $\bar \cC:=\{ \bar \g: \g \in \cC\}$. We assume that  (i) if $\g \in \cC$ then $\bar \g \not \in \cC$, (ii) given $\g \not= \g'$ in $\cC\cup \bar \cC$, the chain $\g$ is not a subchain of $\g'$ and vice versa (this second condition is to avoid 
 trivial steps in the algorithmic definition of $\bar G$ given below). 
  Consider the  new graph $\bar G:= (\bar V, \bar E)$ obtained from $\tilde G= ( \tilde V, \tilde E)$ 
  as follows. 
 Take any  pair  of  vertices  being the extremal vertices of some linear chain in $\cC\cup \bar \cC$. Order arbitrarily the two vertices: call $x$ the first one and $y$ the second one.
      Let $\g^{(1)}, \g^{(2)}, \dots , \g^{(k)}$ be the linear chains in $\cC \cup\bar \cC$ from $x$ to $y$.Write $\g^{(1)}=(x^{(1) }_0=x, x^{(1)}_1, \dots, x^{(1)} _{n_1}=y ) 
  $, $\g^{(2)}=(x^{(2) }_0=x, x^{(2)}_1, \dots, x^{(2)} _{n_2}=y ) 
  $, and so on. 
Then for each linear chain    $\g^{(i)}$ remove from $\tilde G$ all its intermediate vertices and its edges. Then, add the edges $(x,y)$, $(y,x)$ (if they are not present yet).  At this point define 
\begin{align}
&  \bar r(x,y)= r(x,y)\mathds{1}( (x,y) \in \tilde E)  + \sum _{s=1}^k \frac{ r(x, x^{(s)}_1) }{ \G\bigl( \g^{(s) }\bigr) }\,, \label{rocco_hunt_1}\\
&  \bar r(y,x)= r(y,x)\mathds{1}(( y,x) \in \tilde E)  + \sum _{s=1}^k \frac{ r(y, x^{(s)}_{n_s-1} ) }{ \G\bigl(\bar  \g^{(s) }\bigr) }\label{rocco_hunt_2}\,.
\end{align}  Above, the characteristic function $\mathds{1}( (x,y) \in \tilde E) $  equals $1$  if $(x,y) \in \tilde E$, otherwise it equals $0$. Similarly for $\mathds{1}(( y,x) \in \tilde E) $. It is simple to verify that the above definitions  \eqref{rocco_hunt_1} and \eqref{rocco_hunt_2} are well posed, since they do not depend on the chosen order between $x$ and $y$.
  After applying the above procedure for any   pair  of  vertices    being the extremal vertices of some linear chain in $\cC\cup \bar \cC$, call $\bar G= (\bar V, \bar E)$ the resulting graph and 
  set 
 $\bar r(x,y)= \tilde r (x,y)= r(x,y)$ for all edges  $(x,y)\in \bar E\cap \tilde E$  such that $x,y$ are not the extremal points of some linear chain in $\cC\cup \bar \cC$.

 \begin{Proposition}\label{virulino1}
 The family   of hitting probabilities $ \{ \phi(x) \}_{x \in \bar V}$  (cf. Section \ref{sanremo}) is the minimal nonnegative solution of the linear system
\begin{equation}\label{febbre1bis}
\begin{cases}
 \sum _{y \in  \bar V: (x,y) \in \bar E}\bar   r(x,y)\bigl[ \phi(x)-\phi(y)\bigr]=0\,, \qquad x \in \bar V\,,\\
\phi(-1_*)=0\,,\\
\phi(1_*)=1\,,
\end{cases}
\end{equation}
 \end{Proposition}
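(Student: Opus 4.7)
The plan is to view the passage from $\tilde G$ to $\bar G$ as a Schur--complement--type elimination of the intermediate vertices of the chosen linear chains, and to check that the effective rates $\bar r$ in \eqref{rocco_hunt_1}--\eqref{rocco_hunt_2} are exactly those for which the harmonicity condition at an extremal vertex in $\bar G$ reproduces the original harmonicity condition on $\tilde G$ coming from \eqref{febbre1}. The key analytic input is an explicit gambler's--ruin identity for $\phi$ along each linear chain.

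First I would fix a chain $\g=(x_0,x_1,\ldots,x_n)\in \cC\cup\bar\cC$ and set $\Delta_i:=\phi(x_i)-\phi(x_{i-1})$ for $1\leq i\leq n$. Since each intermediate vertex $x_i$ ($1\leq i\leq n-1$) has only $x_{i-1},x_{i+1}$ as neighbours in $\tilde G$, the harmonicity equation in \eqref{febbre1} at $x_i$ reduces to $r_i^-\Delta_i=r_i^+\Delta_{i+1}$. Iterating yields $\Delta_{i+1}=(r_1^-/r_1^+)\cdots(r_i^-/r_i^+)\,\Delta_1$, and summing telescopically together with \eqref{alexian} gives
\[
\phi(x_n)-\phi(x_0)=\G(\g)\,\Delta_1,\qquad \Delta_1=\frac{\phi(x_n)-\phi(x_0)}{\G(\g)}.
\]

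Next I would transfer the harmonicity at the extremal vertices to the reduced graph. Fix a pair $x,y$ of extremal vertices joined in $\cC\cup\bar\cC$ by chains $\g^{(1)},\ldots,\g^{(k)}$ oriented from $x$ to $y$, and split the sum in \eqref{febbre1} at $x$ into contributions from edges $(x,z)\in\tilde E$ with $z\in\bar V$ (which survive in $\bar E$ with unchanged rate) and contributions from first chain edges $(x,x_1^{(s)})$. For each $\g^{(s)}=(x_0^{(s)}=x,x_1^{(s)},\ldots,x_{n_s}^{(s)}=y)$, the previous step gives
\[
r(x,x_1^{(s)})\bigl[\phi(x)-\phi(x_1^{(s)})\bigr] = -r(x,x_1^{(s)})\,\Delta_1^{(s)} = \frac{r(x,x_1^{(s)})}{\G(\g^{(s)})}\bigl[\phi(x)-\phi(y)\bigr].
\]
Summing over $s$ and comparing with \eqref{rocco_hunt_1} shows that all chain contributions at $x$ collapse into the single term $\bar r(x,y)[\phi(x)-\phi(y)]$ appearing in \eqref{febbre1bis}. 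The symmetric computation at $y$, using the reversed chains $\bar\g^{(s)}$ and identity \eqref{supersonico}, produces the term $\bar r(y,x)[\phi(y)-\phi(x)]$ matching \eqref{rocco_hunt_2}. The boundary values $\phi(\pm 1_*)$ are preserved by construction, hence $\phi|_{\bar V}$ is a $[0,1]$--valued solution of \eqref{febbre1bis}.

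Finally I would identify $\phi|_{\bar V}$ with the minimal nonnegative solution via a coupling. The continuous--time Markov chain $\bar X$ on $\bar V$ with rates $\bar r$ can be realised as $\tilde X$ observed only at its successive visits to $\bar V$: any excursion of $\tilde X$ into a chain $\g^{(s)}$ from an extremal vertex $x$ exits at either $x$ or its partner $y$, and the exit probability at $y$ equals $1/\G(\g^{(s)})$ by the same gambler's--ruin computation (applied now with boundary values $0$ and $1$), which is precisely the branching probability encoded in $\bar r$ via \eqref{rocco_hunt_1}. Therefore $\bar X$ hits $1_*$ before $-1_*$ if and only if $\tilde X$ does, so $\phi|_{\bar V}$ coincides with the hitting probability of $1_*$ by $\bar X$, and this is the minimal nonnegative solution of \eqref{febbre1bis} by Theorem~3.3.1 of \cite{N} applied to $\bar X$. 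The main point to keep under control is the bookkeeping of several parallel chains sharing the same endpoints and the orientation--independence of $\bar r(x,y)$ (guaranteed by \eqref{supersonico} and observed immediately after \eqref{rocco_hunt_2}); the analytic content is fully captured by the one--dimensional harmonic computation above.
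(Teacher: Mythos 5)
Your analytic reduction (writing the harmonicity at intermediate vertices as a one–step gambler's–ruin recursion, telescoping to get $\phi(x_1)-\phi(x_0)=[\phi(y)-\phi(x)]/\G(\g)$, and substituting into the harmonicity at the extremal vertices to recover $\bar r$) is exactly the computation the paper carries out, so the core of your argument matches the paper's proof. One small imprecision worth flagging in your final step: the continuous--time chain on $\bar G$ with rates $\bar r$ is \emph{not} literally $\tilde X$ observed at its successive $\bar V$--visits, since the latter does not have exponential holding times (the time spent looping inside a chain and returning to the same extremal vertex is folded into the sojourn); what is true — and all that your argument actually uses — is that the two embedded jump chains agree, which suffices to identify the hitting probabilities and hence the minimal nonnegative solution via Theorem~3.3.1 of \cite{N}.
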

 Note that, by Theorem 3.3.1 of \cite{N}, given $x \in \bar V$ the  quantity $\phi(x)$ equals the hitting probability to $1_*$ for the random walk on $\bar G$ with jump rates $\bar r(\cdot, \cdot)$. 
  
  \begin{Proposition}\label{virulino2}
 The family   of expected hitting times $ \{ \psi(x) \}_{x \in \bar V}$  (cf.  Section \ref{sanremo})  is the minimal nonnegative solution of the linear system 
 \begin{equation}\label{febbre2bis}
\begin{cases}
\sum _{y \in  \bar V: (x,y) \in \bar E}\bar   r(x,y)\bigl[ \psi(x)- \psi(y)\bigr] = 1+ c(x) \,, \qquad x \in \bar V\,,\\
\psi(-1_*)=0\,,\\
\psi(1_*)=0\,,
\end{cases}
\end{equation}
 where  the constant $c(x) $ is computed as follows:  let $\g^{(1)}, \dots, \g^{(m)}$ be the linear chains in $\cC$ exiting from $x$ and let $x_1^{(s)}$ be the first intermediate point after $x$ in $\g^{(s)}$. Then 
\begin{equation}\label{torlototto}
  c(x)=  \sum_{s=1}^m \frac{\tilde r (x, x_1^{ (s)}) c\bigl(\g^{(s)} \bigr)}{\G(\g^{(s)})}\,,
  \end{equation}
 where $c\bigl(\g^{(s)} \bigr)$ is defined as in \eqref{rachele}.
 \end{Proposition}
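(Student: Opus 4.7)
The plan is to eliminate the intermediate vertices of each removed chain from \eqref{febbre2} by solving a simple one-dimensional recursion, and then to verify that the effective equations on $\bar V$ coincide with \eqref{febbre2bis}. Fix a single chain $\g=(x_0,x_1,\dots,x_n)\in\cC\cup\bar\cC$ with $x_0,x_n\in\bar V$, and write $\psi_i:=\psi(x_i)$, $D_i:=\psi_{i+1}-\psi_i$. The equations of \eqref{febbre2} at each intermediate $x_i$ become the first-order affine recursion $D_i=(r_i^-/r_i^+)\,D_{i-1}-1/r_i^+$. Unrolling and using the telescoping identity $\sum_{i=0}^{n-1}D_i=\psi_n-\psi_0$, the two families of products that appear in the resulting sums match exactly the definitions \eqref{alexian} and \eqref{rachele}, yielding $\psi_n-\psi_0=D_0\,\G(\g)-c(\g)$, hence
\[
\psi(x_0)-\psi(x_1)\;=\;-D_0\;=\;\frac{\psi(x_0)-\psi(x_n)-c(\g)}{\G(\g)}\,.
\]

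Substituting this back into the equation of \eqref{febbre2} at a kept vertex $x\in\bar V\setminus\{-1_*,1_*\}$ is the heart of the argument. Its neighbors in $\tilde E$ split into (a) those still in $\bar V$ and (b) first-intermediate vertices $x_1^{(s)}$ of chains $\g^{(s)}\in\cC\cup\bar\cC$ joining $x$ to some extremal $z^{(s)}\in\bar V$. For each type-(b) neighbor the above formula gives
\[
r(x,x_1^{(s)})\bigl[\psi(x)-\psi(x_1^{(s)})\bigr]\;=\;\frac{r(x,x_1^{(s)})}{\G(\g^{(s)})}\bigl[\psi(x)-\psi(z^{(s)})\bigr]\;-\;\frac{r(x,x_1^{(s)})\,c(\g^{(s)})}{\G(\g^{(s)})}\,.
\]
Summing over $s$ and regrouping by terminal vertex, the first terms assemble into the rates $\bar r(x,z^{(s)})$ of \eqref{rocco_hunt_1} (parallel chains between the same pair of extremal vertices being folded into the single sum there), while the second terms assemble into $c(x)$ of \eqref{torlototto}. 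Together with the type-(a) contributions, which are already of the form $\bar r(x,y)[\psi(x)-\psi(y)]$, this is precisely \eqref{febbre2bis}. The boundary conditions $\psi(\pm1_*)=0$ transfer trivially.

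For the minimality statement, given any nonnegative solution $\tilde\psi$ of \eqref{febbre2bis}, I would extend it to $\tilde V$ by solving on each removed chain the two-point boundary value problem with endpoint data $\tilde\psi(x_0^{(s)})$ and $\tilde\psi(x_{n_s}^{(s)})$. By the standard probabilistic representation, the extended value at an interior site equals $\bbE_i[\tau]+\tilde\psi(x_0^{(s)})\bbP_i(X_\tau=x_0^{(s)})+\tilde\psi(x_{n_s}^{(s)})\bbP_i(X_\tau=x_{n_s}^{(s)})$, which is nonnegative. Running the computations of the previous paragraph in reverse, the extended function solves \eqref{febbre2} on all of $\tilde V$, hence dominates pointwise the minimal nonnegative solution $\psi$ from Section \ref{sanremo}; restricting back yields $\tilde\psi\ge\psi|_{\bar V}$, as required.

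The main obstacle I anticipate is the bookkeeping in the middle step. One has to be careful about (i) the case of several distinct chains connecting the same pair of extremal vertices, whose contributions in \eqref{rocco_hunt_1} are packed into a single sum over $s$, and (ii) the fact that each chain $\g\in\cC$ contributes at both its initial and final extremal vertex (through $\g$ and $\bar\g$ respectively), so that the accounting of $c(\g)$ versus $c(\bar\g)$, related by \eqref{supersonico}, has to be done consistently in \eqref{torlototto}.
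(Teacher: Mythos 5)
Your proposal follows the same route as the paper's proof. You solve the first-order affine recursion $D_i=(r_i^-/r_i^+)D_{i-1}-1/r_i^+$ at the interior vertices of a removed chain, telescope to get $\psi(x_n)-\psi(x_0)=D_0\,\G(\g)-c(\g)$, rearrange to express $\psi(x_0)-\psi(x_1)$ in terms of the endpoint values plus the constant $c(\g)/\G(\g)$, and substitute into the equation of \eqref{febbre2} at each kept vertex so that the effective rates regroup into $\bar r$ and the extra constants into $c(x)$; this is exactly the paper's \eqref{pierre_bis1} and \eqref{libro1_bis}. The one place you go slightly beyond the paper's written proof is the minimality step: the paper only checks that $\psi|_{\bar V}$ satisfies \eqref{febbre2bis} and leaves minimality implicit, whereas you spell out the extension of an arbitrary nonnegative solution of \eqref{febbre2bis} back to $\tilde V$ via the chain boundary-value problems and its nonnegativity from the probabilistic representation, which is the correct way to close that gap. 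Your bookkeeping concerns (parallel chains between the same pair, and the $\g$ versus $\bar\g$ accounting at the two endpoints) are exactly the subtleties that need care, and your handling of them agrees with the paper.
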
 
 
\begin{Conclusion} Taking $\cC$ such that $0_* \in \bar V$, from Theorem \ref{zecchino} we get that the asymptotic velocity $v= \left(2 \phi(0_*) -1\right) / \psi(0_*)$ can be computed solving  the above linear systems \eqref{febbre1bis} and \eqref{febbre2bis} parametrized by $\bar V$.
\end{Conclusion}

 The proof of the above Propositions \ref{virulino1} and \ref{virulino2} is postponed to  Section \ref{EA}.


\section{Computability issues: cell reduction}\label{1prigione}
In the previous section 
 we have dealt with  the  random walk $\tilde{X}$ on   two fundamental cells suitably glued together and we have concentrated on  the velocity.  
 In general, both the velocity $v$ and  the diffusion coefficient $\s^2$   
 can be computed by means of linear systems. To this aim we recall some general fact in probability theory that extends Theorems 3.3.1 and 3.3.3 of \cite{N}.
 
Let $(W_t)_{t \in \bbR_+}$ be a Markov chain (with exponential holding times) on a finite state space $\aleph$  and let $c(x,y)$ denote the probability rate for a jump from $x$ to $y \not =x$, i.e. $\bbP( W_{t+dt}=y|X_t=x)= c(x,y) dt + o(dt)$. We denote by $\cL$ the Markov generator of the chain:
\[ \cL f(x) = \sum _{y \not =x} c(x,y) \left( f(y)-f(x) \right) \,, \qquad f : \aleph \mapsto \bbR\,.
\]
We write $c(x)$ for the holding parameter at $x$: $c(x) = \sum _{y \not =x} c(x,y)$. Given a subset $A \subset \aleph$, we write $\t_A$ for the hitting time to $A$, i.e. $\t_A:= \inf \{ t \geq 0 \,:\, W_t \in A\}$. We denote by $\bbP_x$ the law of the Markov chain starting at $x$ and by $\bbE_x$ the associated expectation. We fix disjoint subsets $A,D \subset \aleph$. Then,  given $\l \in \bbR$,  we consider the functions 
$$ h^{(\l)} _A (x) := \bbE_x ( e^{\l \t_A})\,, \qquad h^{(\l)} _{A,D} (x) := \bbE_x ( e^{\l \t_A} \mathds{1}( \t_A < \t_D) )\,.
$$
Then,
 by conditioning on the first jump time,  one easily obtains that $h_A^{(\l)}$ and $h_{A,D}^{(\l)} $ satisfy the following linear systems:
\begin{equation}
\begin{cases}
(\cL +\l )  h_A^{(\l)}(x)  =0 & x \not \in A\\
h_A^{(\l)} (x) =1 & x \in A
\end{cases}
\qquad \qquad ,\qquad \qquad
\begin{cases}
(\cL +\l )  h_{A,D}^{(\l)}(x)  =0 & x \not \in A\cup D\\
h_{A,D}^{(\l)} (x) =1 & x \in A\\
h_{A,D}^{(\l)} (x) =0 & x \in D
\end{cases}
\,.
\end{equation} Given $n=0,1 , \dots $ let  us now define
\[
w_A^{(n)} (x):= \bbE_x (\t_A ^n) \,, \qquad  w_{A,D}^{(n)} (x):= \bbE_x (\t_A ^n \mathds{1}( \t_A < \t_D)) \,.\]
Then, using that $w_A^{(n)} (x)= \frac{d ^n h_A^{(\l)} }{d\l ^n} (x, \l=0)$ and 
 $w_{A,D}^{(n)} (x) = \frac{d ^n h_{A,D}^{(\l)} }{d\l ^n} (x, \l=0)$, from the above systems one easily derives linear systems satisfied by $w_A^{(n)}$ and $w_{A,D}^{(n)}$. For example one gets
 \begin{equation}\label{orvieto1}
 \begin{cases}
 \cL  w_A^{(0)} (x) 
 =0 & x \not \in A\\
w_A^{(0)} (x) =1 & x \in A
\end{cases}\;,\;
\begin{cases}
 \cL  w_A^{(1)} (x)+1 
 =0 & x \not \in A\\
w_A^{(1)} (x) =0 & x \in A
\end{cases}\;,\;
\begin{cases}
 \cL  w_A^{(2)} (x)+2 w_A^{(1)}(x)
 =0 & x \not \in A\\
w_A^{(2)} (x) =0 & x \in A
\end{cases}
\end{equation}
(the first two systems  correspond to the ones in Theorems 3.3.1 and 3.3.3 of \cite{N}) and 
\begin{equation}\label{orvieto2}
\begin{cases}
 \cL  w_{A,D}^{(0)} (x)=0  & x \not \in A\cup D\\
 w_{A,D}^{(0)} (x)=1 & x \in A\\
w_{A,D}^{(0)} (x) =0 & x \in D
\end{cases}
\qquad , \qquad
\begin{cases}
 \cL  w_{A,D}^{(1)} (x)+ w_{A,D}^{(0)}(x)=0  & x \not \in A\cup D\\
 w_{A,D}^{(1)} (x)=0 & x \in A\cup D\,.
\end{cases}
\end{equation}

 Working with  the random walk $\tilde X$ introduced in Section \ref{sanremo}, to compute $v$ in \eqref{trenino} one can proceed as follows:  $\bbP( X_S= \pm 1_*)$ can be computed by the first system in \eqref{orvieto1} with $A= \{\pm 1_*\}$, $\bbE(S)$ can be computed with the second system in \eqref{orvieto1} with $A= \{-1_*, 1_*\}$. To compute $\s^2$ in \eqref{diffondo1} one can 
 proceed as follows. We have already treated $\bbE(S)$ (the denominator in \eqref{diffondo1}).  For the variance we write 
 \begin{equation}\label{treno}
 \begin{cases}
  {\rm Var} (X^*_S-v S)= {\rm Var} ( X_S^*)+ v^2 ( \bbE(S^2)- \bbE(S)^2) - 2 v {\rm Cov}( X_S^*, S)\,,\\
   {\rm Cov}( X_S^*, S)= \bbE(S \mathds{1}( X_S=1_*))- \bbE(S \mathds{1}( X_S=-1_*))- \bbE(X_{S_*}) \bbE(S)
  \end{cases}
 \end{equation}
 The term  $\bbE(S^2)$ can be computed by means of the third system in \eqref{orvieto1}. The variable   $X_S^*$ takes value $\pm 1$ with probability  $\bbP( X_S= \pm 1_*)$, that we have already computed, thus leading to ${\rm Var} ( X_S^*)$ and $\bbE(X_{S_*})$.  Finally, to compute $ \bbE(S \mathds{1}( X_S=\pm 1_*))$ we use the second system in \eqref{orvieto2} with $A= \{ \pm 1_*\}$ and $D= \{- 1_*\}$.

 \subsection{Reduction to a single cell}
 We now  explain how one can obtain all the interesting quantities, such as asymptotic velocity, diffusion coefficient and rate function,  
by only working with a fundamental cell. Again we restrict to Markovian random walks.
The methods outlined below will be applied in Sections \ref{esempi1} and \ref{esempi3} when considering specific examples. Trivially, the removal of linear chains discussed in the previous subsection is completely general, and can be applied also in the case of a random walk in a single cell with absorbing states. 

 \subsubsection{Asymptotic velocity}
We call $J_k$'s  the consecutive  times at which the Markov chain $\tilde X$  defined in Section \ref{sanremo} hits  the $*$--states:
 \begin{equation}\label{jumping}
	\begin{cases}
	J_0 := 0 \\
	J_k := \inf \{ t \geq J_{k-1}\,:\, X_t \in \{ -1_* , 0_* , 1_* \}\,,\; \exists s \in (J_{k-1},t)  \text{ with } X_s \not = X_{J_{k-1} } 
  \}  \quad k\geq 1 \, . 
	\end{cases} 
	\end{equation}

\begin{Lemma}\label{calcolo_v} Setting
$
\tilde{p} = \bbP (X_{J_1} = 1_* )$ and $ \tilde{q} = \bbP (X_{J_1} = -1_* )$, it holds
\begin{equation}\label{alba_bella1}
\bbP ( X_S = 1_* )=\frac{\tilde{p}}{\tilde{p} + \tilde{q}}\,,\qquad  \bbP ( X_S = -1_* )=\frac{\tilde{q}}{\tilde{p} + \tilde{q}}\,,\qquad \bbE(S) = \frac{ \bbE (J_1 )}{\tilde{p} + \tilde{q}}\,.
\end{equation}
In particular, 
the asymptotic velocity in $v$ in \eqref{trenino} can be written as 
$v= \frac{\tilde{p}- \tilde{q} }{\bbE(J_1)}$.
	 \end{Lemma}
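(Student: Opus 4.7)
The plan is to exploit the renewal (excursion) structure of $\tilde X$ at $0_*$. Starting from $0_*$, the walk performs a sequence of excursions that end each time it reaches a $*$-state; each excursion either returns to $0_*$ (with probability $1-\tilde p-\tilde q$) or hits $\pm 1_*$ (with probability $\tilde p+\tilde q$), at which point $S$ is realized.

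First I would note that, by the strong Markov property at the stopping time $J_1$, on the event $\{X_{J_1}=0_*\}$ the post-$J_1$ trajectory is an independent copy of the original process. This immediately gives one-step recursions. For the hitting probabilities, conditioning on $X_{J_1}$ yields
\begin{equation*}
\bbP(X_S=1_*)=\tilde p+(1-\tilde p-\tilde q)\,\bbP(X_S=1_*),
\end{equation*}
whence $\bbP(X_S=1_*)=\tilde p/(\tilde p+\tilde q)$; the formula for $\bbP(X_S=-1_*)$ is identical with $\tilde q$ in place of $\tilde p$. For the expected exit time, the same conditioning (using $S=J_1$ on $\{X_{J_1}\in\{-1_*,1_*\}\}$ and $S=J_1+S'$ with $S'$ an independent copy of $S$ on $\{X_{J_1}=0_*\}$) gives
\begin{equation*}
\bbE(S)=\bbE(J_1\mathds{1}(X_{J_1}\neq 0_*))+\bbE(J_1\mathds{1}(X_{J_1}=0_*))+(1-\tilde p-\tilde q)\,\bbE(S),
\end{equation*}
which after cancellation reduces to $(\tilde p+\tilde q)\bbE(S)=\bbE(J_1)$, i.e.\ $\bbE(S)=\bbE(J_1)/(\tilde p+\tilde q)$.

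The final identity $v=(\tilde p-\tilde q)/\bbE(J_1)$ is then a direct substitution into \eqref{trenino}: the numerator $\bbP(X_S=1_*)-\bbP(X_S=-1_*)$ equals $(\tilde p-\tilde q)/(\tilde p+\tilde q)$, and dividing by $\bbE(S)=\bbE(J_1)/(\tilde p+\tilde q)$ cancels the factor $\tilde p+\tilde q$.

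I do not anticipate a genuine obstacle here: all three identities follow from one application of the strong Markov property at $J_1$ and the fact that $\tilde X$ started at $0_*$ regenerates upon each return to $0_*$. The only mildly delicate point is checking that the excursion decomposition is legitimate, i.e.\ that $J_1<\infty$ almost surely and that the random variables involved have finite first moment; both are guaranteed by the standing hypothesis $\bbE(S^2)<\infty$ together with $J_1\leq S$ on $\{X_{J_1}\in\{-1_*,1_*\}\}$ and the geometric tail of the number of excursions.
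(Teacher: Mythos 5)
Your proof is correct and uses the same regenerative structure at the return times $J_i$ as the paper's argument. The difference is purely presentational: the paper expands $\bbP(X_S=\pm 1_*)$ and $\bbE(S)$ as explicit geometric series over the number of returns to $0_*$ and evaluates them, whereas you set up and solve the equivalent one-step renewal equation at $J_1$, which is a little cleaner and avoids the $\sum_k k\gamma^k$ manipulation at the cost of invoking $\bbE(S)<\infty$ up front (as you correctly note, this is covered by the standing hypothesis $\bbE(S^2)<\infty$).
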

	 The proof is given in Section \ref{montagnola}.
	 
	 \medskip

	The above relations show that, in order to compute the asymptotic velocity $v$ in \eqref{trenino},  it suffices to compute $\bbP ( X_{J_{1}} = \pm 1_* )$, $\bbE ( J_{1} )$. 
This can be done working with just one cell. To this end we let $\hat{X}_t := X_{J_1 \wedge t}$ and define $\bbP_{v_n} ( \cdot ) := \bbP (\cdot | X_0 = v_n )$ for $v \in V$ and $n\in \bbN$, with the convention that $\bbP(\cdot ) =  \bbP_{0_*} ( \cdot ) $.
Then
	\[ \tilde{p} = \bbP (X_{J_1} = 1_* ) = \sum_{ u \in V : (\underline{v} , u) \in E} \bbP (X_{j_1} = u_0 ) \bbP_{u_0} ( \hat{X}_t = 1_*  \mbox{ for some } t\in \bbR_+ ) \]
	where $j_1$ denotes the first jump time of the rw $X$, i.e.\ 
	$ j_1 := \inf\{t \in \bbR_+\,:\, X_t \not = 0_*\}$. We recall that $u_0$ is the site in the $0$--labelled cell of the quasi 1D lattice, corresponding to $u$. 
Again,  the hitting probability $  \bbP_{u_0} ( \hat{X}_t = 1_*  \mbox{ for some } t\in \bbR_+ )$ can be computed by solving the $|V| \times |V|$ linear system in Theorem 3.3.1 of \cite{N}. 
The same method can be applied to obtain $ \tilde{q}=\bbP (X_{J_1} = -1_* )$.

Moreover we can write
	\[ \bbE (J_1 ) = \bbE (j_1 ) + \sum_{ u \in V : (\underline{v} , u) \in E} \bbP (X_{j_1} = u_0 ) \bbE_{u_0} (J_1) 
	+ \sum_{ w \in V : (w, \overline{v}) \in E} \bbP (X_{j_1} = w_{-1} ) \bbE_{w_{-1}} (J_1) \, . \]
Note that $\bbE_{w_{-1}} (J_1) = \bbE_{w_{0}} (J_1)$
 and, being $J_1$ a hitting time for the process $\hat{X}$, $\bbE_{u_{0}} (J_1)$ and $\bbE_{w_{0}} (J_1)$ can be simultaneously computed by solving the $|V| \times |V|$ linear system in Theorem 3.3.3 of \cite{N}.
 
 \subsubsection{Diffusion coefficient}   
 \begin{Lemma}\label{calcolo_diff_coeff} The diffusion coefficient $\s^2$ in Theorem \ref{zecchino} can be written as
 \begin{equation}\label{crema}
 	\s^2  = 
	 \frac{\tilde{p} + \tilde{q}}{\bbE(J_1)} + v^2 \frac{ \bbE(J_1^2)}{\bbE(J_1)} 
	-2v \left( \frac{\bbE ( J_1 \mathds{1} ( X_{J_1} = 1_* ) ) - \bbE ( J_1 \mathds{1} ( X_{J_1} = -1_* ) ) }{\bbE(J_1)} \right) \, ,
 	\end{equation}
 where $\tilde{p} $ and $\tilde{q}$ are defined as in Lemma \ref{calcolo_v}.
 \end{Lemma}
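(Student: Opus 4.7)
The plan is to exploit the renewal structure of $X$ at the $*$--states $\{-1_*,0_*,1_*\}$ in order to write $X^*_S-vS$ as a random sum of i.i.d.\ mean--zero variables, and then apply Wald's second identity. For $k\geq 1$ set $\tau_k:=J_k-J_{k-1}$ and $\Delta_k:=X^*_{J_k}-X^*_{J_{k-1}}$, and let $N:=\min\{k\geq 1\,:\, X_{J_k}\neq 0_*\}$. Starting from $X_0=0_*$, every $k\leq N$ satisfies $X_{J_{k-1}}=0_*$, so the strong Markov property together with the cell--shift invariance \eqref{simm1} show that $(\tau_k,\Delta_k)_{1\leq k\leq N}$ are i.i.d.\ with the common law of $(J_1,X^*_{J_1})$ under $\bbP=\bbP_{0_*}$. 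Extending to an i.i.d.\ sequence $\{(\tau_k,\Delta_k)\}_{k\geq 1}$ by an independent copy past time $N$, $N$ becomes a stopping time for the natural filtration and is geometric with success parameter $\tilde p+\tilde q$. Since $\Delta_k=0$ for $k<N$ while $\Delta_N\in\{-1,1\}$, we have
$$ S=\sum_{k=1}^N \tau_k,\qquad X^*_S=\sum_{k=1}^N \Delta_k.$$

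Set $Z_k:=\Delta_k-v\tau_k$. By Lemma \ref{calcolo_v} one has $v\,\bbE(J_1)=\tilde p-\tilde q$, hence $\bbE(Z_1)=(\tilde p-\tilde q)-v\,\bbE(J_1)=0$. The hypothesis $\bbE(S^2)<\infty$ forces $\bbE(\tau_1^2)<\infty$, and $\bbE(N^2)<\infty$ is automatic from the geometric distribution, so Wald's second identity applies and yields
$$ \var\bigl(X^*_S-vS\bigr)=\var\!\left(\sum_{k=1}^N Z_k\right)=\bbE(N)\,\var(Z_1).$$
Combined with $\bbE(S)=\bbE(N)\,\bbE(\tau_1)=\bbE(J_1)/(\tilde p+\tilde q)$ (Lemma \ref{calcolo_v}), this gives the clean identity
$$ \s^2=\frac{\var(X^*_S-vS)}{\bbE(S)}=\frac{\var(Z_1)}{\bbE(J_1)}.$$

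It then suffices to expand $\var(Z_1)=\var(\Delta_1)+v^2\var(J_1)-2v\,{\rm Cov}(\Delta_1,J_1)$ and reorganise. Since $\Delta_1^2=\mathds{1}(X_{J_1}\neq 0_*)$, we have $\bbE(\Delta_1^2)=\tilde p+\tilde q$ and $\bbE(\Delta_1)=\tilde p-\tilde q$, while
$${\rm Cov}(\Delta_1,J_1)=\bbE(J_1\mathds{1}(X_{J_1}=1_*))-\bbE(J_1\mathds{1}(X_{J_1}=-1_*))-(\tilde p-\tilde q)\bbE(J_1).$$
Plugging these in and using $v=(\tilde p-\tilde q)/\bbE(J_1)$, the unwanted contributions of the form $(\tilde p-\tilde q)^2/\bbE(J_1)$, $v^2\bbE(J_1)$ and $2v(\tilde p-\tilde q)=2v^2\bbE(J_1)$ cancel pairwise, leaving exactly \eqref{crema}. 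The only nontrivial point is establishing the i.i.d.\ renewal representation with $N$ as a bona fide stopping time, after which the application of Wald's identity and the ensuing algebra are routine.
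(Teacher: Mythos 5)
Your proof is correct, and it takes a genuinely different (and arguably cleaner) route than the paper's. The paper's proof expands $\var(X^*_S-vS)=\var(X^*_S)+v^2\var(S)-2v\,{\rm Cov}(X^*_S,S)$ and then evaluates each of the three pieces separately by conditioning on the geometric number of returns of $\tilde X$ to $0_*$; in particular the computation of $\bbE(S^2)$ requires squaring the decomposition \eqref{Sdecomp} and summing series like $\sum_k k(k-1)\gamma^k$, which is laborious. You instead exhibit $X^*_S-vS$ as a stopped sum $\sum_{k=1}^N Z_k$ of i.i.d.\ centred increments $Z_k=\Delta_k-v\tau_k$ (after coupling with an i.i.d.\ extension past $N$, with $N$ a stopping time for the filtration generated by the pairs $(\hat\tau_k,\hat\Delta_k)$), and then a single application of Wald's second identity gives $\var(X^*_S-vS)=\bbE(N)\var(Z_1)$. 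Since $\bbE(Z_1)=0$, one actually has $\var(Z_1)=\bbE(Z_1^2)$, so the remaining algebra is immediate: $\s^2=\var(Z_1)/\bbE(J_1)=\big[(\tilde p+\tilde q)+v^2\bbE(J_1^2)-2v(\bbE(J_1\mathds{1}(X_{J_1}=1_*))-\bbE(J_1\mathds{1}(X_{J_1}=-1_*)))\big]/\bbE(J_1)$, which is \eqref{crema} (no real need to pass through the covariance decomposition you wrote). What your approach buys is a conceptual shortcut that hides the geometric-series bookkeeping inside Wald's identity, and as a side benefit Wald's first identity simultaneously re-derives the formulas $\bbE(S)=\bbE(J_1)/(\tilde p+\tilde q)$ and $v\bbE(J_1)=\tilde p-\tilde q$ from Lemma \ref{calcolo_v}. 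The one point worth making fully explicit is the coupling: the actual $(\tau_k,\Delta_k)$ for $k>N$ are identically $(0,0)$ (since $\pm1_*$ are absorbing for $\tilde X$), so the i.i.d.\ sequence you apply Wald to must be built by grafting fresh independent copies of $(J_1,X^*_{J_1})$ past $N$; the strong Markov property at the times $J_0,\dots,J_{N-1}$ guarantees the joint law of $\big((\tau_1,\Delta_1),\dots,(\tau_N,\Delta_N),N\big)$ matches that of the stopped i.i.d.\ sequence, which is what makes the substitution legitimate.
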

 The proof is given in Section \ref{montagnola}.  We have already explained how to compute the 
 above quantities apart from $\bbE(J_1^2), \bbE ( J_1 \mathds{1} ( X_{J_1} =\pm 1_* ) )$. 
 The idea is to consider the system after the first jump from $0_*$ and then using  the third system in \eqref{orvieto1} and the second system in \eqref{orvieto2} for a suitable random walk inside a single cell. In order to avoid heavy notation, we refer the reader to Sections \ref{esempi1} and \ref{esempi3} for examples.


\section{Examples: $N$--periodic linear model } \label{esempi1}

We now consider a continuous time nearest--neighbor random walk on  $\bbZ$ with $N$--periodic rates, where $N$ is any fixed positive integer, and exponential holding times.  The fundamental graph  $G=(V,E)$ is then given by $V=\{0,1, \dots, N\}$, $E=\{ (x,x+1) \,:\, 0 \leq x <N\} \cup \{ (x,x-1)\,:\, 0<x \leq N\}$.  To simplify the notation, 
we let  
\begin{equation*}
\xi_x^+ :=r(x,x+1)\,, \qquad \xi_x^- := r(x,x-1) \,, \qquad \rho_x =\xi_x^-/\xi_x^+\,, \qquad x \in \bbZ \,.
\end{equation*} 
Note that  $\xi_{x+N}^\pm = \xi_x^\pm$  and $\rho_{x+N}=\rho_x$.
 The LLN and CLT   have been derived  by Derrida in  \cite{D} on the basis of some Ansatz, while a more rigorous derivation of the LLN has been provided in \cite{TF}.

\begin{Proposition}\label{calzino} Set $\D= \rho_1 \cdots \rho_{N}$. Given $1\leq k \leq N$ let  
\begin{align}
r_k& := 
\frac{1}{\xi_k^+} \Big( 1+ \rho_{k+1} + \rho_{k+1} \rho_{k+2}+ \cdots + \rho_{k+1} \rho_{k+2} \cdots \rho_{k+N-1}\Big)\label{ronf} \\
 \L_k& := 1+ \rho_1+ \rho_1 \rho_2 + \cdots +\rho_1 \rho_2\cdots \rho_{k-1}\,,\\
 \Upsilon_k&:= \sum_{i=1}^{k-1} \sum_{n=1}^i \frac{1}{\xi_n^+} \Big( \prod_{j=n+1}^i \rho_j\Big)\,,
\end{align}
while, given $1\leq k \leq N-1$, let
\begin{equation}
  w_k:=\frac{1}{\xi_k^+}  \Big( \rho_{k+1} \cdots \rho_{N} + 
 \rho_{k+1} \cdots \rho_N \rho_{N+1}+   \cdots+ \rho_{k+1} \cdots \rho_{N+k-1}\Big)\,.\label{w100}
\end{equation}
Finally, given $1\leq k \leq N-1$,  set $h_k: = \L_k/\L_N$ and $k_k :=  \Lambda_{k} \Upsilon_{N}/\Lambda_{N} - \Upsilon_{k} $.
Then the asymptotic velocity $v$ and diffusion constant $\s^2$ of the skeleton process $X_*$ are given by
\begin{align}
v & =  (1-\D) / \Big(\sum _{k=1} ^N r_k\Big)\,, \label{razzo}\\
\s^2 & = 
	\frac{1}{ \sum_{k=1}^N r_k } \bigg[ 
	( 1+ \D) + v^2 \Big( \sum_{k=1}^{N-1} 2k_k r_k \Big) 
	+2v \Big( \sum_{k=1}^{N-1} (w_k-h_k r_k)\Big) 
	\bigg] \, . 
	\label{leopardi}
\end{align}
\end{Proposition}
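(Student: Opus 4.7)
The plan is to apply Lemmas \ref{calcolo_v} and \ref{calcolo_diff_coeff}, which reduce the computation of $v$ and $\s^2$ to the evaluation of $\tilde p,\tilde q, \bbE(J_1), \bbE(J_1^2)$ and $\bbE\bigl(J_1\mathds{1}(X_{J_1}=\pm 1_*)\bigr)$. For these, first--step analysis at time $0$ is the right tool: the walk waits an exponential time of rate $\xi_0^+ +\xi_0^-$, then jumps to $+1$ with probability $\xi_0^+/(\xi_0^++\xi_0^-)$ or to $-1$ with probability $\xi_0^-/(\xi_0^++\xi_0^-)$. After this first jump the process coincides until $J_1$ with a birth--death chain absorbed at $\{-N,0,N\}$. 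Crucially, the $N$--periodicity $\xi_{x+N}^\pm=\xi_x^\pm$ together with the relabeling $x\mapsto x+N$ show that the birth--death chain on $\{-N,\ldots,0\}$ is identical in law to the one on $\{0,\ldots,N\}$, so every computation reduces to a single auxiliary birth--death chain on $\{0,1,\ldots,N\}$ with rates $\xi_x^\pm$ and absorbing barriers at $0$ and $N$.

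For the hitting probabilities of this auxiliary chain, the classical formula gives $\bbP_x(\tau_N<\tau_0)=\Lambda_x/\Lambda_N$. Taking $x=1$ on the positive side and $x=N-1$ on the shifted negative side, and combining with the first--jump law from $0$, one finds $\tilde p= \xi_0^+/[(\xi_0^++\xi_0^-)\Lambda_N]$ and $\tilde q=\xi_0^-\rho_1\cdots\rho_{N-1}/[(\xi_0^++\xi_0^-)\Lambda_N]$. The periodicity identity $\xi_0^-\rho_1\cdots\rho_{N-1}=\xi_0^+\Delta$ (which follows from $\rho_N=\rho_0$) then yields $\tilde p-\tilde q=\xi_0^+(1-\Delta)/[(\xi_0^++\xi_0^-)\Lambda_N]$ and $\tilde p+\tilde q=\xi_0^+(1+\Delta)/[(\xi_0^++\xi_0^-)\Lambda_N]$. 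The expected hitting times are produced by the second system in \eqref{orvieto1} on $\{0,\ldots,N\}$; their standard solution involves $\Lambda_k$ and $\Upsilon_k$. Applying first--step analysis at $0$ and reorganising via periodicity, $\bbE(J_1)$ turns into $\xi_0^+\sum_{k=1}^N r_k/[(\xi_0^++\xi_0^-)\Lambda_N]$ with $r_k$ as in \eqref{ronf}, and the ratio $(\tilde p-\tilde q)/\bbE(J_1)$ simplifies to $(1-\Delta)/\sum_k r_k$, proving \eqref{razzo}.

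For $\s^2$ one uses \eqref{crema}, which also requires $\bbE(J_1^2)$ and $\bbE(J_1\mathds{1}(X_{J_1}=\pm 1_*))$. These are produced by the third system in \eqref{orvieto1} and the second in \eqref{orvieto2}, applied once again to the auxiliary single--cell chain; their explicit solutions are multi--index sums involving $\Lambda_k,\Upsilon_k$ and $1/\xi_k^+$. Plugging them into \eqref{crema}, dividing by $\bbE(J_1)$ and grouping by powers of $v$, the constant term collects into $(1+\Delta)/\sum_k r_k$ (just $(\tilde p+\tilde q)/\bbE(J_1)$ rewritten), the $v^2$--coefficient into $\sum_{k=1}^{N-1}2k_k r_k/\sum_k r_k$, and the $v$--coefficient into $2\sum_{k=1}^{N-1}(w_k-h_k r_k)/\sum_k r_k$, giving \eqref{leopardi}. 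The main obstacle is precisely this last regrouping: the birth--death solutions for $\bbE(J_1^2)$ and $\bbE(J_1\mathds{1}(X_{J_1}=\pm 1_*))$ produce nested double and triple sums whose identification with the combinations $h_k,k_k,w_k$ defined around \eqref{w100} requires careful use of the periodicity identities $\xi_{x+N}^\pm=\xi_x^\pm$, $\rho_{x+N}=\rho_x$ and $\Delta=\rho_0\rho_1\cdots\rho_{N-1}=\rho_1\cdots\rho_N$.
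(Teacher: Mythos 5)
Your plan is correct and coincides with the paper's own derivation: for the velocity it is exactly the paper's ``second method'' (Lemma \ref{calcolo_v}, first--step analysis at $0_*$, reduction to a single birth--death chain on $\{0,\dots,N\}$ absorbed at both ends, and periodicity to fold the negative side onto the positive one), and for $\sigma^2$ it is the only method the paper uses (Lemma \ref{calcolo_diff_coeff} together with the systems \eqref{orvieto1}--\eqref{orvieto2}). The ``main obstacle'' you flag — regrouping the nested sums into the $h_k,k_k,w_k$ combinations — is precisely where the paper invests its technical effort, isolating it into Lemma \ref{lemma_calcoli} (closed form for the tridiagonal absorbed birth--death systems in terms of $\Lambda_k,\Upsilon_k$) and Lemma \ref{7nani} (the periodicity resummation identity that converts the reversed--chain sums into $\sum x_k z_k$ and $\sum x_k w_k$), so a complete proof would need to make those two lemmas explicit rather than appeal to ``careful use of periodicity.''
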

Above, and in what follows, we use  the convention that the  sum of an empty set of  addenda is zero and the product of an empty set of factors is one. In particular, $\L_1= 1$, $\Upsilon_1=0$ and  $\prod_{j=i+1}^i \rho_j=1$.

Note that $v$ coincides with the expression obtained in \cite{D}. Moreover,  when $N=1$,  $X=(X_t)_{t\in\bbR_+}$ is the spatially homogeneous,   continuous time nearest neighbor random walk 
 on $\bbZ$ with rates $r(x,x+1)=\xi_x^+=:\alpha$, $r(x,x-1)=\xi_x^-=:\b$, where $\a,\b>0$.    The fundamental graph $G=(V,E)$  can  be defined  as $V= \{0,1\}$  and $E= \{ (0,1), (1,0) \}$,  and  the marked 
vertices can be taken as $\underline{v}=0, \overline{v}=1$. Note that in this case  the skeleton process $X^*$ coincides with the random walk,    the random time $S$ defined in \eqref{pietro} is an exponential variable of mean $1/(\a+\b)$, independent from $X_S$.
By \eqref{trenino} the asymptotic velocity of the process $X$ is $v = \a-\b $.
Moreover, 
the asymptotic diffusion coefficient is given by 
	\[ \s^2 = \frac{ {\rm Var} (X_S -v S)}{ \bbE (S) } = \frac{ {\rm Var} (X_S -(\a-\b) S)}{ \bbE (S) } 
	=(\a+\b)\left[ {\rm Var} (X_S) + (\a-\b)^2 {\rm Var} ( S)\right] =\a+\b   \, , \]
	since $ {\rm Var} (X_S)= 4\a\b /(\a+\b)^2$ and $ {\rm Var} ( S)= 1/(\a+\b)^2$.
	\medskip
	
	The rest of the section is devoted to the proof of Proposition \ref{calzino}.

\subsection{Asymptotic velocity}\label{mezzaluna}
The computation of the asymptotic velocity $v$  is obtained below  by using  two different methods. The first one is based on the removal of linear chains, as outlined in Section \ref{rimozione}. The second method   uses the reduction to a single cell as in Section \ref{1prigione} (one could as well reduce the problem to a 2--cells graph as in Section \ref{sanremo}). While being longer,  it will allow us to introduce some schemes useful also for the computation of the diffusion coefficient $\s^2$  of the limiting rescaled skeleton process.
%
%
\subsubsection{First method}
 To compute the asymptotic velocity we  remove linear chains as described in Section 
\ref{rimozione}. Note that the points $-N,0,N$ correspond to $-1_*,0_*, 1_*$, respectively.
 We consider the  family of linear chains $\cC = \{ \g , \g '\}$, where 
$$  \g =(0,1, \dots, N)  \,, \qquad  \g' =( -N,-N+1, \dots, 0) \, . 
$$
The resulting graph $\bar G$  defined in Section \ref{rimozione}  has vertices $ -N,0,N$ and edges $(-N,0)$, $(0,-N)$,  $ (0,N)$,  $ (N,0)$. Moreover, the weights $\bar r$ (cf. \eqref{rocco_hunt_1}, \eqref{rocco_hunt_2}) are given by
$$\begin{cases}
& \bar r (-N,0)= \bar r (0,N)= \xi_0^+ /\G \bigl( \g  \bigr) \,, \\
& \bar r (0,-N)= \bar r (N,0)= \xi_N^- /\G \bigl( \bar \g \bigr) = \xi_0^-/ \G \bigl( \bar \g \bigr)\,.
\end{cases}
$$
Recall (cf. \eqref{alexian} and \eqref{supersonico}) that
 \begin{equation} \label{sole}
 \G (\g)
  = 1+\sum_{i=1}^{N-1} \rho_1 \rho_2 \cdots \rho_i \,, \qquad 
 \G( \bar \g)   =\rho_0 \frac{\G(\g)}{\D} \,,\qquad
 \D=\rho_0 \rho_1 \cdots \rho_{N-1}\,.
   \end{equation}

The system \eqref{febbre1bis} trivially implies that
\begin{align}
& \bbP( X_S^*=N)= \frac{ \xi_0^+/ \G \bigl( \g  \bigr)}{  \xi_0^- /\G \bigl( \bar \g \bigr)+ \xi_0^+ /\G \bigl( \g  \bigr)} 
= \frac{1}{1+\Delta} \, , \label{salutare1}  \\
& \bbP( X_S^*= -N)=\frac{ \xi_0^-/ \G \bigl( \bar \g  \bigr)}{  \xi_0^- /\G \bigl( \bar \g \bigr)+ \xi_0^+/ \G \bigl( \g  \bigr)} = 
\frac{\Delta}{1+\Delta} \,.  \label{salutare11}\end{align}
Recall  (cf. \eqref{rachele}) that 
\begin{equation}\label{lego_movie}
  c(\g)  =  \sum_{1\leq k \leq i \leq N-1} \frac{1}{\xi_k^+} \rho_{k+1} \cdots \rho_i
  \,,\; \qquad 
   c(\bar \g)   =\sum_{1\leq k \leq i \leq N-1} \frac{1}{\xi_{-k}^-}
   \rho_{-(k+1)}^{-1} \cdots   \rho_{-i}^{-1} \,.
\end{equation}
 Then the constant $c(0)$ of Proposition \ref{virulino2} is given by
$c(0)= \xi_0^+ c(\g)/\G(\g) + \xi _0^- c(\bar \g)/\G(\bar \g)$.
In particular,  the
  system \eqref{febbre2bis} implies that 
\begin{equation}\label{salutare2}
 \bbE (S)= \psi (0)= \frac{1+  c(0)}{\bar r (0,N)+\bar r (0,-N)}= \frac{ 1+ \xi_0^+ c(\g)/\G(\g) + \xi _0^- c(\bar \g)/\G(\bar \g) }{  \xi_0^+/ \G \bigl( \g  \bigr) +  \xi_0^-/ \G \bigl( \bar \g \bigr)}\,.
 \end{equation}
 Combining \eqref{salutare1}, \eqref{salutare11} and \eqref{salutare2} we get that the asymptotic velocity $v$  of the skeleton process $X^*$ (cf. \eqref{trenino}) is given by
$\bigl[\xi_0^+ /\G \bigl( \g  \bigr) - \xi_0^-/ \G \bigl( \bar \g \bigr)    \bigr] /\bigl[1+ \xi_0^+ c(\g)/\G(\g) + \xi _0^- c(\bar \g)/\G(\bar \g) \bigr]$.
By using the relation  $ \D \xi_0^+ / \G(\g)= \xi_0^- / \G(\bar \g)$,
 the above expression can be simplified to
 \begin{equation}\label{scorpion100}
 v=\frac{1-\D   }{\frac{\G(\g)}{\xi_0^+}  + c(\g)+ \D c( \bar \g)}
 \,.
 \end{equation}
The asymptotic velocity of the rw $X$ is therefore given by $Nv$. 
It is simple to  check that our expression for $v$ is equivalent to  the ones  obtained in \cite{D}[Eq. (49)] and  \cite{TF}[Eq. (4.10)]. We restrict to  \cite{D}  where, under suitable Ansatz,  Derrida  derives   for the asymptotic velocity the value $(1-\D)/\bigl(\sum _{k=1} ^N r_k\bigr)$.
 To recover Derrida's result  we state a technical fact in full generality also for future applications:
 \begin{Lemma} \label{7nani}
Recall the definition of $w_k$  given in  \eqref{w100} and, for $1 \leq k \leq N-1$, set
\begin{equation}
 z_k := \frac{1}{\xi_k^+} \Big( 1+ \rho_{k+1} + \rho_{k+1} \rho_{k+2}+ \cdots + \rho_{k+1} \rho_{k+2} \cdots \rho_{N-1}\Big)  \label{z100}\,.
 \end{equation}
 Let $(x_i)_{i\in \bbZ}$ be a $N$-periodic sequence. Then it holds 
    \[ \sum_{i=1}^{N-1} \sum_{k=1}^i \frac{x_k}{\xi_k^+} \r_{k+1} \cdots \r_i = 
    \sum_{i=1}^{N-1} x_k z_k \, , \qquad 
     \D \sum_{i=1}^{N-1} \sum_{k=1}^i \frac{x_{-k}}{\xi_{-k}^-} \r_{-(k+1)}^{-1} 
     \cdots \r_{-i}^{-1} = 
    \sum_{i=1}^{N-1} x_k w_k \,.\]
     \end{Lemma}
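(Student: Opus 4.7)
My plan is to prove both identities by interchanging the order of summation; this is essentially the entire content of the first, and reduces the second to a bookkeeping exercise in $N$-periodicity.

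For the first identity, swap the $i$- and $k$-sums:
\begin{equation*}
\sum_{i=1}^{N-1}\sum_{k=1}^{i}\frac{x_k}{\xi_k^+}\r_{k+1}\cdots\r_i = \sum_{k=1}^{N-1}\frac{x_k}{\xi_k^+}\sum_{i=k}^{N-1}\r_{k+1}\cdots\r_i,
\end{equation*}
with the convention that the empty product ($i=k$) equals $1$. Comparison with \eqref{z100} identifies the inner $i$-sum as $\xi_k^+ z_k$, which gives the claim.

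For the second identity, after the analogous sum-swap I have
\begin{equation*}
\D\sum_{k=1}^{N-1}\frac{x_{-k}}{\xi_{-k}^-}\sum_{i=k}^{N-1}\prod_{j=k+1}^{i}\r_{-j}^{-1}.
\end{equation*}
I would then reindex via $a:=N-k$, $b:=N-i$, $m:=N-j$ and use the $N$-periodicity of $(x_k)$, $(\xi_k^-)$ and $(\r_k)$ to replace $x_{-k}$ by $x_a$, $\xi_{-k}^-$ by $\xi_a^-$, and $\r_{-j}^{-1}$ by $\r_m^{-1}$. The new ranges are $a\in\{1,\dots,N-1\}$, $b\in\{1,\dots,a\}$, $m\in\{b,\dots,a-1\}$, so the expression becomes
\begin{equation*}
\D\sum_{a=1}^{N-1}\frac{x_a}{\xi_a^-}\sum_{b=1}^{a}\prod_{m=b}^{a-1}\r_m^{-1}.
\end{equation*}

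To finish, I would rewrite $w_a$ from \eqref{w100} using $\r_{N+j}=\r_j$ and $\r_{a+1}\cdots\r_N=\D/(\r_1\cdots\r_a)$, obtaining $w_a=\frac{\D}{\xi_a^+}\sum_{\ell=1}^{a}\prod_{m=\ell}^{a}\r_m^{-1}$; then peeling off the factor $\r_a^{-1}=\xi_a^+/\xi_a^-$ yields $w_a=\frac{\D}{\xi_a^-}\sum_{\ell=1}^{a}\prod_{m=\ell}^{a-1}\r_m^{-1}$, which matches the coefficient of $x_a$ displayed above (with $\ell=b$). The only delicate step is the bookkeeping of index ranges and empty products under the reindexing and periodic substitutions; no conceptual difficulty is involved.
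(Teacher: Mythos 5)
Your proof is correct and takes essentially the same route as the paper: both proofs first swap the order of summation and then reindex via $a=N-k$, $b=N-i$ (the paper uses $v,u$) together with $N$-periodicity. The only cosmetic difference is the direction of the final algebraic step — you rewrite $w_a$ via $\rho_{a+1}\cdots\rho_N=\D/(\rho_1\cdots\rho_a)$ to match the reindexed coefficient, whereas the paper massages the reindexed sum forward into the displayed form of $w_v$ — but the underlying manipulation is identical.
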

\begin{proof}
By definition, 
	\[ \sum_{i=1}^{N-1} \sum_{k=1}^i \frac{x_k}{\xi_k^+} \r_{k+1} \cdots \r_i = 
	\sum_{k=1}^{N-1} x_k \sum_{i=k}^{N-1} \frac{1}{\xi_k^+} \r_{k+1} \cdots \r_i =
    \sum_{i=1}^{N-1} x_k z_k \, . \]
Moreover, 
we can write (see below for explanations)
\begin{equation*}
\begin{split} \D \sum_{i=1}^{N-1} & \sum_{k=1}^i \frac{x_{-k}}{\xi_{-k}^-} \r_{-(k+1)}^{-1} 
     \cdots \r_{-i}^{-1} 
     = \D \sum _{1\leq u \leq v \leq N-1} \frac{x_v}{\xi_v^-} \rho_u^{-1} \rho_{u+1}^{-1} \dots \rho_{v-1}^{-1}\\
& =
\sum _{1\leq u \leq v \leq N-1}  \frac{x_v}{\xi_v^-} (\rho_0 \rho_1 \cdots \rho_{u-1}) ( \rho_v \rho_{v+1} \cdots \rho_{N -1})
 = \sum_{1 \leq u \leq v \leq N-1} \frac{x_v}{\xi_v^-} \rho_v \rho_{v+1} \cdots \rho_{N+u-1}\\
& =
\sum_{v=1}^{N-1} \frac{x_v}{\xi_v^+}  \Big( \rho_{v+1} \cdots \rho_{N} + 
 \rho_{v+1} \cdots \rho_N \rho_{N+1}+   \cdots+ \rho_{v+1} \cdots \rho_{N+v-1}\Big)= \sum_{v=1}^{N-1} x_v w_v \, . 
\end{split}
\end{equation*}
The first equality follows by translating the indexes by $N$ and calling $v=-k+N, u=-i+N$,  the third equality uses that $\rho_0 \rho_1 \cdots \rho_{u-1}= \rho_N \rho_{N+1} \cdots \rho_{N+u-1}$.
\end{proof}
 
 As a consequence we obtain:
\begin{Corollary}\label{derry} It holds $\sum_{k=1}^N r_k = \frac{\G(\g)}{\xi_0^+}  + c(\g)+ \D c( \bar \g)$. Hence, $v =  \frac{1-\D}{ \sum _{k=1} ^N r_k}$ as in \cite{D}.\end{Corollary}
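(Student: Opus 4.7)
The plan is to recognize $\sum_{k=1}^N r_k$ as exactly the denominator appearing in \eqref{scorpion100}, and then to invoke Lemma \ref{7nani} to collapse the double sums defining $c(\g)$ and $c(\bar\g)$ into single sums of the $z_k$'s and $w_k$'s from \eqref{z100} and \eqref{w100}.

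First I would isolate the $k=N$ term in $\sum_{k=1}^N r_k$. By $N$--periodicity of the rates, $\xi_N^+=\xi_0^+$ and $\rho_{N+j}=\rho_j$, so
\[
r_N = \frac{1}{\xi_0^+}\bigl(1+\rho_1+\rho_1\rho_2+\cdots+\rho_1\rho_2\cdots\rho_{N-1}\bigr) = \frac{\G(\g)}{\xi_0^+},
\]
by the definition of $\G(\g)$ in \eqref{sole}. Next, for each $1\leq k\leq N-1$, I would split the sum defining $r_k$ in \eqref{ronf} into the first $N-k$ terms (products involving only $\rho_{k+1},\ldots,\rho_{N-1}$) and the remaining $k$ terms (products which wrap past index $N-1$). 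Comparing with \eqref{z100} and \eqref{w100} gives the decomposition $r_k = z_k + w_k$.

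Then I would apply Lemma \ref{7nani} with the $N$--periodic sequence $x_i\equiv 1$: its two identities read exactly
\[
\sum_{k=1}^{N-1} z_k = \sum_{1\leq k\leq i\leq N-1}\frac{1}{\xi_k^+}\rho_{k+1}\cdots\rho_i = c(\g),
\qquad
\sum_{k=1}^{N-1} w_k = \D\sum_{1\leq k\leq i\leq N-1}\frac{1}{\xi_{-k}^-}\rho_{-(k+1)}^{-1}\cdots\rho_{-i}^{-1} = \D c(\bar\g),
\]
using the expressions for $c(\g)$ and $c(\bar\g)$ in \eqref{lego_movie}. Summing $r_N + \sum_{k=1}^{N-1}(z_k+w_k)$ yields the first claim $\sum_{k=1}^N r_k = \G(\g)/\xi_0^+ + c(\g) + \D c(\bar\g)$; plugging this into \eqref{scorpion100} gives $v=(1-\D)/\sum_{k=1}^N r_k$, matching Derrida's formula \eqref{razzo}.

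The calculation is entirely bookkeeping, so the only mild obstacle is making the split $r_k=z_k+w_k$ cleanly and matching the wrap--around products $\rho_{k+1}\cdots\rho_{N+j}$ in $w_k$ against the backward products $\rho_{-(k+1)}^{-1}\cdots\rho_{-i}^{-1}$ appearing in $c(\bar\g)$; this matching is exactly what the second identity of Lemma \ref{7nani} does (via the index shift $v=N-k$, $u=N-i$ and the telescoping $\rho_0\rho_1\cdots\rho_{u-1}=\rho_N\rho_{N+1}\cdots\rho_{N+u-1}$ from $N$--periodicity), so no extra work beyond invoking that lemma is required.
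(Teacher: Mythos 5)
Your proof is correct and follows essentially the same path as the paper: isolating $r_N=\G(\g)/\xi_0^+$ by periodicity, splitting $r_k=z_k+w_k$ for $1\le k\le N-1$, and using Lemma \ref{7nani} with $x_i\equiv 1$ to obtain $\sum_k z_k=c(\g)$ and $\sum_k w_k=\D c(\bar\g)$. The only cosmetic difference is that the paper treats the identity $c(\g)=\sum_k z_k$ as immediate from the definitions (a swap of the order of summation) and invokes Lemma \ref{7nani} only for the $\D c(\bar\g)$ side, whereas you route both through the lemma; the substance is identical.
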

\begin{proof}
We can write $c(\g)= \sum _{k=1}^{N-1} z_k$. Applying Lemma \ref{7nani} with $x_i\equiv 1 $ we get that $\D c(\bar \g)=\sum _{k=1}^{N-1}w_k$.
The thesis then easily follows by observing that $r_k=z_k+w_k $ for $k=1, \dots, N-1$ and 
that $r_N=r_0 = \G(\g)/ \xi_0^+$.
\end{proof}
%
%
\subsubsection{Second method} Recall the notation introduced at the beginning of Section \ref{1prigione}. By Lemma \ref{calcolo_v}  we have
	$v = ( \tilde{p} - \tilde{q})/\bbE (J_1)$. 
Applying the Markov property at the time of first jump, we get  
$\tilde{p} =  \bbP (X_{J_1} = N) = \frac{\xi_0^+}{\xi_0^- + \xi_0^+} \bbP_1 (X_{J_1} = N) $.  Let $h_i := \bbP_i ( X_{J_1} =  N )$ for $i =  0  \ldots ,N$. Then the vector $( h_0  \ldots , h_N)$ is the minimal non--negative solution of the linear system 
	\begin{equation} \label{sabato}
	 \begin{cases}
	h_{0} = 0\,,\; h_N=1\,, \\
	h_i = \frac{\xi_i^-}{\xi_i^- + \xi_i^+} h_{i-1} + \frac{\xi_i^+}{\xi_i^- + \xi_i^+} h_{i+1} \, , \quad  1 \leq i \leq N-1
	\end{cases} 
	\end{equation}
(see \cite{N}, Theorem 3.3.1).	

\smallskip

In the computations below we shall encounter this type of systems several times, so the following result will reveal handy. To simplify the notation, let $\rho_k := \xi_k^- / \xi_k^+$ for all $k \in \bbZ$. 

\begin{Lemma} \label{lemma_calcoli}
Let $(x_0 , x_1  \ldots x_N)$ solve the linear system 
	\begin{equation}\label{rose} 
	\begin{cases}
	x_0 = 0\,, \;\;  
	x_N = a\,, \\
	x_{i+1} - x_i = \rho_i (x_i - x_{i-1} ) - \a_i \, , \quad i=1, \dots, N-1
	\end{cases} 
	\end{equation}
where $a $ and $\a_1 \ldots \a_N$ are given numbers. Then   $(x_0 , x_1  \ldots x_N)$  is univocally determined:
	\begin{equation*}
	x_1 = \frac{ a + \Upsilon^*_{N}}{\Lambda_{N}} \, , \qquad 
	x_k = \Lambda_{k} x_1 - \Upsilon^*_{k} = \frac{ \Lambda_{k}(a + \Upsilon^*_{N})}{\Lambda_{N}} - \Upsilon^*_{k} \, , 
	\end{equation*}
where we have defined for $k=1, \dots, N$
\begin{equation}\label{lambada}
	\begin{cases}
		\Lambda_k= 1 + \sum_{i=1}^{k-1} \left( \prod_{j=1}^i \rho_j \right)  \,,\\
		\Upsilon^*_k= \sum_{i=1}^{k-1} \sum_{n=1}^i \a_n \left( \prod_{j=n+1}^i \rho_j\right)\,. 
			\end{cases}  
	\end{equation}
\end{Lemma}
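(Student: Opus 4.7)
The plan is to solve the first-order recursion for the consecutive differences $d_i := x_{i+1}-x_i$ explicitly, then recover the $x_k$'s by telescoping, and finally use the boundary condition $x_N=a$ to identify $x_1$.

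First I would set $d_i := x_{i+1}-x_i$ for $i=0,1,\dots,N-1$, so in particular $d_0 = x_1 - x_0 = x_1$. The recurrence in \eqref{rose} then reads $d_i = \rho_i d_{i-1} - \alpha_i$ for $i=1,\dots,N-1$, a one-step linear recursion. I would solve it by straightforward iteration (easily checked by induction on $i$), obtaining
\begin{equation*}
d_i = x_1 \prod_{j=1}^i \rho_j \;-\; \sum_{n=1}^i \alpha_n \prod_{j=n+1}^i \rho_j, \qquad i=1,\dots,N-1,
\end{equation*}
with the usual conventions that empty products equal $1$ and empty sums equal $0$ (so this formula also covers $i=0$).

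Next I would telescope: since $x_0=0$,
\begin{equation*}
x_k = \sum_{i=0}^{k-1} d_i = x_1 \Big(1 + \sum_{i=1}^{k-1}\prod_{j=1}^i \rho_j\Big) \;-\; \sum_{i=1}^{k-1}\sum_{n=1}^i \alpha_n \prod_{j=n+1}^i \rho_j = \Lambda_k\, x_1 - \Upsilon^*_k,
\end{equation*}
by the definitions \eqref{lambada}. This already gives the second claimed formula for every $k=1,\dots,N$. Specializing to $k=N$ and using the boundary condition $x_N=a$ yields $\Lambda_N x_1 - \Upsilon^*_N = a$, which (since $\Lambda_N\geq 1>0$) can be solved for $x_1 = (a+\Upsilon^*_N)/\Lambda_N$, establishing the first claim. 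Substituting back gives the closed form $x_k = \Lambda_k(a+\Upsilon^*_N)/\Lambda_N - \Upsilon^*_k$, and uniqueness is automatic since each step of the argument was an equality.

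There is no real obstacle here: the only thing to watch is the bookkeeping of the indices in the nested products and sums and a careful use of the empty-product/empty-sum conventions (so that the formulas are valid also for $i=0$ and $k=1$). The linearity of the system in $x_1$ (once one realises that knowing $x_0$ and $x_1$ propagates everything through the recursion) is what makes the whole computation purely mechanical.
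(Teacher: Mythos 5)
Your proof is correct and follows essentially the same route as the paper: iterate the first-order recursion for the consecutive differences $x_{i+1}-x_i$, telescope to express $x_k$ as $\Lambda_k x_1 - \Upsilon^*_k$, and then use the boundary condition $x_N=a$ to solve for $x_1$. Introducing the shorthand $d_i$ is a cosmetic difference only; the argument matches the paper's.
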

 \begin{proof}
Iterating the last equation in \eqref{rose} gives for $i=1,\dots, N-1$
    \[x_{i+1} -x_i = \rho_i \rho_{i-1} \cdots \rho_{1}(x_{1} - x_{0}) -\sum_{n=1}^{i}\a_n \prod_{j=n+1}^{i} \rho_j =
    x_{1} \Big( \prod_{j=1}^i \rho_j  \Big)  - \sum_{n=1}^{i}\a_n \Big(\prod_{j=n+1}^{i} \rho_j \Big)\,, \]
    (with the convention that $  \prod_{j=i+1}^{i} \rho_j =1$). 
Now use the boundary conditions to get:
    \[
    x_N-x_{0}  = a =
     \sum_{i=0}^{N-1} (x_{i+1}-x_i) 
     = x_{1} \Big( 1+ \sum_{i=1}^{N-1} \prod_{j=1}^i \rho_j \Big) - \sum_{i=1}^{N-1} \sum_{n=1}^{i}\a_n \Big(\prod_{j=n+1}^{i} \rho_j \Big)
     = x_1 \Lambda_{N} -\Upsilon^*_{N} 
     \]
which gives $x_1 = (a +\Upsilon^*_{N})/\Lambda_{N}$ as claimed.
Finally, for $k =2,\dots,  N-1 $ we have
	\[ 
	x_k = x_0 + \sum_{i=0}^{k-1} (x_{i+1}-x_i) = 
	x_1\Big( 1+ \sum_{i=1}^{k-1} \prod_{j=1}^i \rho_j \Big) - \sum_{i=1}^{k-1} \sum_{n=1}^{i}\a_n \Big(\prod_{j=n+1}^{i} \rho_j \Big)
     = x_1 \Lambda_{k} - \Upsilon^*_{k}  \, , 
	\]
and this concludes the proof.
\end{proof}

Going back to the computation of $h_i = \bbP_i ( X_{J_1} =N)$, we observe that the last equation
in \eqref{sabato} is equivalent to $h_{i+1} -h_i = \rho_i (h_i - h_{i-1})$. 
Then by  Lemma \ref{lemma_calcoli} with $a=1$ and $\a_1 = \cdots = \a_{N-1}=0$, we  get that $h_i$ are as in Proposition \ref{calzino}, in particular $h_1= 1/\L_N$ and $h_{N-1}=  \Lambda_{N-1}/\Lambda_{N}$. 
From this we deduce  
	\begin{equation}\label{tindari}
	 \begin{cases}
	\tilde{p} =  \frac{\xi_0^+}{\xi_0^- + \xi_0^+} h_1 = \frac{\xi_0^+}{\xi_0^- + \xi_0^+}  \frac{1}{\Lambda_{N}} \\
	\tilde{q} = \frac{\xi_0^-}{\xi_0^- + \xi_0^+} (1-h_{N-1})  = \frac{\xi_0^-}{\xi_0^- + \xi_0^+} \bigg( \frac{ \Lambda_{N} - \Lambda_{N-1}}{\Lambda_{N}}\bigg)  = \frac{\xi_0^-}{\xi_0^- + \xi_0^+} \frac{ \D}{\rho_0\Lambda_{N}} = \D \tilde{p} 
	\end{cases}
	\end{equation}
where $\D=  \rho_1 \dots \rho_N$ as defined in \eqref{sole}.
Above we have used    that $\tilde q$ equals the probability, starting at $N$, to jump to $N-1$ and afterwards  to reach $0$ before $N-1$. 

\smallskip

It remains to compute $\bbE (J_1)$. Conditioning  to the first jump (taking place after an exponential time of mean $(\xi_0^- + \xi_0^+)^{-1}$), we get	\begin{equation} \label{pistacchio}
	\bbE (J_1) = \frac{1}{\xi_0^- + \xi_0^+}+
	\frac{\xi_0^+}{\xi_0^- + \xi_0^+} \bbE_1 (J_1) + \frac{\xi_0^-}{\xi_0^- + \xi_0^+} \bbE_{-1} (J_1) \, . 
	\end{equation}
Above $\bbE_i$ denotes the expectation for the random walk $X$ starting at $i$. Define $k_i=\bbE_i( \mbox{hitting time of } \{0,N\}) $. Note that $k_1= \bbE_1 (J_1) $ and $k_{N-1}=E_{-1} (J_1)$.
Then (cf. Theorem 3.3.3 of \cite{N}) $( k_0 \ldots  k_N)$ is the minimal non--negative solution of
    \begin{equation}\label{menta}
    \begin{cases}
    k_{0}=0 \,,\;\;    k_N=0\,,\\
        k_i=\frac{1}{ \xi_i^-+ \xi_i^+ } + \frac{\xi_i^+}{ \xi_i^- + \xi_i^+ } k_{i+1} + \frac{\xi_i^-}{ \xi_i^- + \xi_i^+ } k_{i-1}\,, \qquad
    1 \leq i \leq N-1 \, . 
    \end{cases}
    \end{equation}
Being the last equation equivalent to
    $k_{i+1} -k_i = \rho_i (k_i - k_{i-1}) -\frac{1}{\xi_i^+}$, we again apply Lemma \ref{lemma_calcoli}, now with $a=0$, $\a_i = 1/\xi_i^+$. In particular, $k_i$ coincides with the definition given in Proposition \ref{calzino}, moreover    $k_{1}   = \Upsilon_N/\Lambda_{N}$ where 
      \[  \Upsilon_{ N} := \sum_{i=1 }^{N-1} \sum_{n=1 }^i \frac{1}{\xi_n^+} \prod_{j=n+1}^i \rho_j
  = \sum_{1\leq n \leq i \leq N-1}  \frac{1}{\xi_n^+} \prod_{j=n+1}^i \rho_j\,.
  \]
    Note that $ \L_N= \G(\g)$ and $  \Upsilon_{ N}=c(\g)$, with $\G(\g)$, $c(\g)$ defined in \eqref{sole} and \eqref{lego_movie} respectively.  In conclusion, $k_1= \bbE_1(J_1)= c(\g)/\G(\g)$.
    Instead of expressing $k_{N-1}= \bbE_{-1} (J_1)$  by means of  Lemma \ref{lemma_calcoli}, we simply invoke symmetry (a  space inversion w.r.t. the origin)   to conclude that $k_{N-1}= c(\bar \g)/\G(\bar \g)$ (recall the notation of \eqref{sole} and \eqref{lego_movie}). Coming back to \eqref{pistacchio} and   using the relation $ \D \xi_0^+ / \G(\g)= \xi_0^- / \G(\bar \g)$
 we obtain
        \begin{equation*}
    \bbE (J_1)    = \frac{\xi_0^+}{\xi_0^- + \xi_0^+}   
    \left[ \frac{1}{\xi_0^+} +\frac{c(\g)}{\G(\g)}+\frac{\xi_0^-}{\xi_0^+}  \frac{c(\bar \g)}{\G(\bar \g)} 
  \right]= \frac{\xi_0^+}{\xi_0^- + \xi_0^+}   \frac{1}{ \G(\g)}
    \left[ \frac{\G(\g) }{\xi_0^+} +c(\g)+\D c(\bar \g)
  \right] \,.
  \end{equation*}
     Recalling that $v = ( \tilde{p} - \tilde{q})/\bbE (J_1)$ and using \eqref{tindari}, we recover \eqref{scorpion100} and therefore \eqref{razzo} by Corollary \ref{derry}.
     
     For later use we point out that the above equation for $\bbE(J_1)$, the identity $\G(\g)= \L_N$ and Corollary  \ref{derry} 
imply that
\begin{equation}\label{cipolline}
\bbE (J_1)    =\frac{\xi_0^+  }{\xi_0^+ + \xi_0^-} \frac{1}{\L_N}   \big( \sum_{n=1}^N r_n \big)
\,.
\end{equation}
    
    \subsection{Diffusion coefficient}
We   use Lemma \ref{calcolo_diff_coeff}. Due to the computations in the previous  subsection
it remains  to compute $\bbE(J_1^2) $ and $\bbE(J_1 \mathds{1}( X_{J_1}=\pm 1_*)) $.

    Given $i =1,2, \dots, N-1$ call $s_i= \bbE_i(J_1 \mathds{1}( X_{J_1}=\pm 1_*)) $ and recall that $h_i= \bbP_i( X_{J_1}= N)$. We set $s_0=s_N=0$, $h_0=0, h_N=1$. 
          Conditioning on the first jump (taking  place at the time $j_1$)  we get 
    \begin{equation}\label{birra}
    \begin{split}
    \bbE(J_1 \mathds{1}( X_{J_1}= 1_*)) & =    \bbE(j_1 \mathds{1}( X_{J_1}= 1_*)) + 
           \bbE((J_1-j_1) \mathds{1}( X_{J_1}= 1_*))\\
           &  =    \frac{\xi_0^+}{(\xi_0^-+\xi_0^+)^2}  h_1+   \frac{\xi_0^+}{\xi_0^-+\xi_0^+}  s_1\,.
           \end{split}
    \end{equation}
   By the second system in \eqref{orvieto2} with $ A = \{1_*\}$, $D=\{0_*\}$, we have
    \begin{equation} \label{violoncello}
    \begin{cases}
    s_{i+1}-s_{i}= \rho_i (s_{i}-s_{i-1} ) - \frac{ h_i }{\xi_i^+}
  \\
  s_0= s_N=0\,.
  \end{cases}
  \end{equation} 
 Applying  Lemma \ref{lemma_calcoli} we get 
  $s_k= \L_k \Psi _N/\L_N - \Psi _k$, where $\L_k= 1+ \sum _{i=1}^{k-1} \prod _{j=1}^i \rho_j$ and $ \Psi _k = \sum _{i=1}^{k-1} \sum _{n=1}^i \frac{h_n}{\xi_n^+} \prod _{j=n+1}^i \rho_j$. 
  In particular, $s_1 = \Psi_N/\Lambda_N$. 
  Coming back to \eqref{birra}, and recalling that $h_1 = 1/\L_N$ (cf. the discussion before \eqref{tindari}),  we get 
   \begin{equation}\label{wodka}
    \bbE(J_1 \mathds{1}( X_{J_1}= 1_*)) = 
    \frac{\xi_0^+}{( \xi_0^-+\xi_0^+) }   \frac{1}{\Lambda_N} \bigg[ 
           \frac{1}{\xi_0^-+\xi_0^+}  + \Psi_N \bigg] \,.
    \end{equation}
By symmetry, then, it holds 
   \begin{equation}\label{wodka2}
    \bbE(J_1 \mathds{1}( X_{J_1}= -1_*)) = 
    \frac{\xi_0^-}{( \xi_0^-+\xi_0^+) }   \frac{1}{\bar{\Lambda}_N} \bigg[ 
           \frac{1}{\xi_0^-+\xi_0^+}  + \bar{\Psi}_N \bigg] 
	= \frac{\xi_0^+}{( \xi_0^-+\xi_0^+) }   \frac{\D}{\Lambda_N} \bigg[ 
           \frac{1}{\xi_0^-+\xi_0^+}  + \bar{\Psi}_N \bigg] 
           \,
    \end{equation}
where $\bar{\L}_N= 1+ \sum _{i=1}^{N-1} \prod _{j=1}^i \rho_{-j}^{-1}$, $\bar{ \Psi} _N = \sum _{i=1}^{N-1} \sum _{n=1}^i \frac{\bar h_{-n}}{\xi_{-n}^-} \prod _{j=n+1}^i \rho_{-j}^{-1}$ and $\bar h_i= \bbP_i( X_{J_1}= -1_*)$. Note that in the second identity in \eqref{wodka2} we have used
 the relation $\bar{\L}_N = \r_0 \L_N / \D$.  
Since $\bar h_{-n}= 1- h_{-n}$,
 using      Lemma \ref{7nani}  with $x_k=1-h_k$, we  get
	\[ \Psi_N = \sum_{k=1}^{N-1} z_k h_k \, , \qquad
	 \D \bar{\Psi}_N = \sum_{k=1}^{N-1} w_k (1-h_k) \, .\]
Coming back to \eqref{wodka2}, using \eqref{cipolline}  and that $w_k+z_k=r_k$, we get 	\begin{equation} \label{parte1}
	 \frac{ \bbE(J_1 \mathds{1}( X_{J_1}= 1_*)) -\bbE(J_1 \mathds{1}( X_{J_1}= -1_*)) }{\bbE(J_1)} 	=
	\frac{1}{\sum_{n=1}^N r_n} \bigg[ \frac{1-\D}{\xi_0^- + \xi_0^+} + 
	\sum_{k=1}^{N-1} (h_k r_k-w_k) \bigg] \, , 
	\end{equation}
with the $r_n$'s defined as in \eqref{ronf}. \\

Let us now concentrate on $\bbE(J_1^2)$. 
As in the previous subsection we set $k_i = \bbE_i(J_1)$ for $1\leq i \leq N-1$ and $k_0=k_N=0$.
Using  the strong Markov property at the first jump  time $j_1$ (note that $\bbE(j_1)=1/(\xi_0^++\xi_0^-)$, $\bbE(j_1^2)=2/(\xi_0^++\xi_0^-)^2$ ), the periodicity and relation \eqref{pistacchio}, we  get
	\begin{equation} \label{chiave}
	 \begin{split} 
	\bbE(J_1^2) & =
	\bbE ( [j_1 + ( J_1 - j_1)]^2 | X_{j_1} = 1) \frac{\xi_0^+}{\xi_0^+ + \xi_0^-} 
	+ \bbE ( [j_1 + ( J_1 - j_1)]^2 | X_{j_1} = -1) \frac{\xi_0^-}{\xi_0^+ + \xi_0^-} \\
	& = \bbE(j_1^2) + \frac{\xi_0^+}{\xi_0^+ + \xi_0^-}  \bigg( \bbE_1 (J_1^2) + 2 \bbE(j_1) \bbE_1(J_1) \bigg) 
	+ \frac{\xi_0^-}{\xi_0^+ + \xi_0^-}  \bigg( \bbE_{-1} (J_1^2) + 2 \bbE(j_1) \bbE_{-1}(J_1) \bigg) 
	\\ & = \frac{2}{(\xi_0^+ + \xi_0^-)^2} + \frac{\xi_0^+}{\xi_0^+ + \xi_0^-} \bigg( \bbE_1(J_1^2)  + \frac{2k_1}{\xi_0^+ + \xi_0^-} \bigg)
	+ \frac{\xi_0^-}{\xi_0^+ + \xi_0^-} \bigg( \bbE_{N-1}(J_1^2) + \frac{2k_{N-1}}{\xi_0^+ + \xi_0^-} \bigg)
	\\ & = \frac{\xi_0^+ \bbE_1(J_1^2)  + \xi_0^- \bbE_{N-1}(J_1^2) }{\xi_0^+ + \xi_0^-} + \frac{2 \bbE (J_1)}{\xi_0^+ + \xi_0^-} \, ,
	\end{split} 
	\end{equation}
Note that $\bbE_1(J_1)$ has already been computed. It only remains to compute $\bbE_1(J_1^2)$, from which an expression for 	$\bbE_{N-1}(J_1^2)$ will follow by symmetry. 
To this aim, let $\g_i := \bbE_i (J_1^2 )$ for $i=1,2 \ldots N-1$, and set $\g_0 = \g_N = 0$. 
Then, by using the third linear system in \eqref{orvieto1} with $A=\{ 0_* , 1_* \}$, we 
have
	\[ \begin{cases}
	\g_i = \frac{\xi_i^+ \g_{i+1} + \xi_i^- \g_{i-1}}{\xi_i^+ + \xi_i^-} + \frac{2 k_i}{\xi_i^+ + \xi_i^-} \\ 
	\g_0 = \g_N = 0 \, . 
	\end{cases}\]
The second equation is equivalent to $\g_{i+1} - \g_i = \rho_i (\g_i - \g_{i-1} ) -2\frac{k_i}{\xi_i^+} $, so we can use Lemma \ref{lemma_calcoli} to get 
	\begin{equation} \label{conf1}
	\g_1 = \Phi_N / \L_N \, , \qquad 
	\Phi_N := \sum_{i=1}^{N-1} \sum_{n=1}^i \frac{2k_n}{\xi_n^+} \prod_{j=n+1}^{i} \r_j \, . 
	\end{equation}
By symmetry  and since $\bar{\L}_N = \r_0 \L_N / \D$ we then have 
	\begin{equation} \label{conf2}
	\g_{N-1} =  \bar{\Phi}_N / \bar{\L}_N = \frac{\D \bar{\Phi}_N}{\r_0 \L_N} \, , \qquad 
	\bar{\Phi}_N := \sum_{i=1}^{N-1} \sum_{n=1}^i \frac{2k_{-n}}{\xi_{-n}^-} \prod_{j=n+1}^{i} \r_{-j}^{-1} \, . 
	\end{equation}
Finally, we can  apply Lemma \ref{7nani} with $x_i = 2k_i$, to get
	\[ \Phi_N + \D \bar{\Phi}_N = \sum_{i=1}^{N-1} 2k_i ( z_i + w_i ) 
	= \sum_{i=1}^{N-1} 2k_i r_i \, , \]
from which, recalling \eqref{chiave}, we obtain
	\begin{equation} \label{parte2} 
	\begin{split}
	\bbE(J_1^2) & = 	\frac{\xi_0^+  }{\xi_0^+ + \xi_0^-} \frac{1}{\L_N} 
	\Big( \Phi_N + \D\bar \Phi_N \Big) + \frac{2 \bbE (J_1)}{\xi_0^+ + \xi_0^-} \\ &
	= 	\frac{\xi_0^+  }{\xi_0^+ + \xi_0^-} \frac{1}{\L_N}  
	\bigg(  \sum_{n=1}^{N-1} 2k_n r_n \bigg) + \frac{2 \bbE (J_1)}{\xi_0^+ + \xi_0^-} 
	\, .  \end{split} 
	\end{equation}
Let us come back to \eqref{crema}. Since $v= (\tilde p- \tilde q)/ \bbE(J_1)$ we can write (recall \eqref{tindari} and Corollary \ref{derry}) $(\tilde p+ \tilde q )/\bbE(J_1)= v(1+\D)/(1-\D)= (1+\D)/  \sum_{n=1}^N r_n$.
In conclusion, plugging  this last relation, \eqref{parte1} and \eqref{parte2} into \eqref{crema}, and recalling \eqref{cipolline}, we end up with the following expression for the diffusion coefficient in the $N$-periodic model:
\begin{equation*}
	\s^2 = \frac{2v^2}{\xi_0^- + \xi_0^+}  +
	\frac{1}{ \big( \sum_{n=1}^N r_n \big)} \bigg[ 
	( 1+ \D) + v^2 \Big( \sum_{n=1}^{N-1} 2k_n r_n \Big) 
	-2v \Big( \frac{1-\D}{\xi_0^- + \xi_0^+}  + \sum_{n=1}^{N-1} (h_n r_n - w_n ) \Big) 
	\bigg] \, . 
	\end{equation*}
Using now that $v= (1-\D)/ \sum_{n=1}^N r_n $ (cf.  Corollary \ref{derry}) we get \eqref{leopardi}.

 


\section{Examples: Parallel--chain model}\label{esempi3}
Let the fundamental graph $G = (V,E)$ be composed by two parallel chains as  in Figure \ref{parallel_fig} below, and $\cG = (\cV , \cE )$ be the correspondent quasi 1D lattice. Call $(a_i^k)_{i=1}^{N_1-1}$ and $(b_j^k)_{j=1}^{N_2-1}$ the intermediate vertices of the upper and lower chain respectively in the $k$th cell. Wherever no confusion occurs, we shall omit the cell index, so that $a_i = a_i^k$ and $b_j = b_j^k$.  The random walk $X$ has exponential holding times and its jump rates are indicated in Figure \ref{parallel_fig}. In addition, we set $\xi_0^+:= r(0_*, a_1^0)$,  $\xi_0^-:= r(0_*, a_{N_1-1}^{-1})$,  $\eta_0^+:= r(0_*, b_1^0)$,  $\eta_0^-:= r(0_*, b_{N-1}^{-1})$ and set  
   $\n_0 := \xi_0^+ + \xi_0^- + \eta_0^+ + \eta_0^-$ for brevity. 

\begin{figure}[!ht]
    \begin{center}
     \centering
  \mbox{\hbox{
  \includegraphics[width=.7\textwidth]{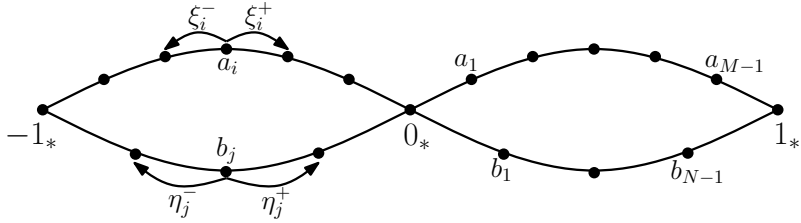}}}
            \end{center}
            \caption{Two cells in the parallel chains model. }\label{parallel_fig}
  \end{figure}

\subsection{Asymptotic velocity}
Call $\g^{(1)}$ and $\g^{(2)}$ the upper and lower chain respectively, and set 	\[ 
	\r^{(1)}_i  := \frac{\xi_i^-}{\xi_i^+}, \qquad  
	\r^{(2)}_j  := \frac{\eta_j^-}{\eta_j^+}, 
	\]
for $0 \leq i \leq N_1 -1$, $0 \leq j \leq N_2 -1$.
Whenever convenient, we think about the jump rates and the $\r_j^{(i)}$'s as to be extended by periodicity, that is: 
$\r^{(i)}_{j \pm N_i} = \r^{(i)}_{j}$, for $i = 1,2$. 
Set 
	\begin{equation}\label{pasqua1} \D^{(i)} := \r^{(i)}_0  \r^{(i)}_1  \cdots \r^{(i)}_{N_i-1} \, , \qquad i=1,2. \end{equation}

Given a linear chain $\g$, recall the definitions of $\G (\g)$ and $c(\g)$ given in \eqref{alexian} and \eqref{rachele}, that 
now read:
	\begin{equation}\label{pasqua2}\begin{split} 
		\G (\g^{(i)}) = 1 + \sum_{k=1}^{N_i -1} \prod_{j=1}^k \r_j^{(i)}  \, , \qquad \quad 
	& c (\g^{(i)}) =  \sum_{1 \leq k \leq j \leq N_i -1} \frac{1}{p_k^{(i)}} \prod_{n=k+1}^j \r_n^{(i)} \, , 
	\end{split} \end{equation}
for $i=1,2$, where for brevity we have set $p_k^{(1)} := \xi_k^+$ and $ p_k^{(2)} := \eta_k^+$. 

We apply the chain removal method presented in Section \ref{rimozione}. Recall the definition of $\tilde{G} = (\tilde{V} , \tilde{E})$ from Section \ref{marina}. In this context $\tilde{G}$ consists of $4$ linear chains, pairwise identical to $\g^{(1)}$ and $\g^{(2)}$. 
We reduce all these chains by defining, according to \eqref{rocco_hunt_1} and \eqref{rocco_hunt_2}, the new jump rates 
	\begin{equation} \label{c2}
	 r_+ := \frac{\xi_0^+}{\G(\g^{(1)})} + \frac{\eta_0^+}{\G(\g^{(2)})} \, , 
	\qquad \quad 
	r_- := \frac{\xi_0^-}{\G(\bar{\g}^{(1)})} + \frac{\eta_0^-}{\G(\bar{\g}^{(2)})}
	=  \frac{\xi_0^+ \D^{(1)} }{\G(\g^{(1)})} + \frac{\eta_0^+ \D^{(2)} }{\G(\g^{(2)})} \, , 
	\end{equation}
where the last equality follows from \eqref{supersonico}. 


 Proposition \ref{virulino1} gives 
$ \bbP( X_S = \pm 1_*)=  \frac{r_\pm }{r_+ + r_-} $, while 
 Proposition \ref{virulino2} gives $\ds \bbE(S) = \frac{1 + c(0_*)}{r_+ + r_-} $, where
	\begin{equation} \label{venerdi}
	 \begin{split} 
	c(0_*)  & = \frac{ \xi_0^+ c (\g^{(1)})}{\G(\g^{(1)})} + \frac{ \eta_0^+ c (\g^{(2)})}{\G(\g^{(2)})}  
	+ \frac{ \xi_0^- c (\bar{\g}^{(1)})}{\G(\bar{\g}^{(1)})} + \frac{ \eta_0^- c (\bar{\g}^{(2)})}{\G(\bar{\g}^{(2)})}  \\
	& = \frac{ \xi_0^+ }{\G(\g^{(1)})} \Big( c (\g^{(1)}) + \D^{(1)} c (\bar{\g} ^{(1)}) \Big) 
	+  \frac{ \eta_0^+ }{\G(\g^{(2)})} \Big( c (\g^{(2)}) + \D^{(2)} c (\bar{\g} ^{(2)}) \Big) \, . 
	\end{split} 
	\end{equation}
	By Theorem \ref{zecchino} it holds $v = \frac{r_+ - r_-}{1 + c(0_*)} $.  The above formula \eqref{venerdi} can be easily simplified. To this aim note that, by periodicity,
	$$
	 c (\bar \g^{(i)}) =  \sum_{1 \leq k \leq j \leq N_i -1} \frac{1}{\bar p_{N_i-k}^{(i)}} \prod_{n=k+1}^j \left(\r_{N_i-n}^{(i)}\right)^{-1}=  \sum_{1 \leq k \leq j \leq N_i -1} \frac{1}{\bar p_{-k}^{(i)}} \prod_{n=k+1}^j \left( \r_{-n}^{(i)}\right)^{-1}
	$$
	where $\bar p^{(1)}_i=\xi_i^-$, $ \bar p^{(2)}_i=\eta_i^-$.
Then, by Lemma \ref{7nani}	we have
	\[ c (\g^{(i)}) + \D^{(i)} c (\bar{\g} ^{(i)})  = \sum_{k=1}^{N_i -1} r_k^{(i)} \, \qquad i=1,2 \]
where, in analogy with \eqref{ronf},
 we have set
	\begin{equation}\label{pasqua3}
	r_k^{(1)} := \frac{1}{\xi_k^+} \Big( 1 +\sum _{i=1}^{N_1-1} \prod _{j=1}^i \rho^{(1)}_{k+j} \Big) \, , 
	\qquad 
	r_k^{(2)} := \frac{1}{\eta_k^+} \Big( 1 +\sum _{i=1}^{N_2-1} \prod _{j=1}^i \rho^{(2)}_{k+j} \Big) \, . \end{equation}

Noting that $\ds r_{N_1}^{(1)} = \frac{ \G (\g^{(1)})}{\xi_0^+}$ and $\ds r_{N_2}^{(2)} = \frac{ \G (\g^{(2)})}{\eta_0^+}$, 
equation \eqref{venerdi} becomes
	\begin{equation} \label{c1}
	 c (0_*) =\frac{1}{r_{N_1} ^{(1)} } \bigg( \sum_{k=1}^{N_1} r_k^{(1)} \bigg )+ \frac{1}{r_{N_2} ^{(2)} } \bigg( \sum_{k=1}^{N_2} r_k^{(2)}\bigg) - 2\, . 
	 \end{equation}
Putting all together,  since  $v = \frac{r_+ - r_-}{1 + c(0_*)} $, we get:
\begin{Proposition}\label{ovetto} The asymptotic velocity of the skeleton process $X_*$ is given by
	\begin{equation}\label{v_parallel_c} v =\frac{
	\frac{1}{r_{N_1} ^{(1)} }   (1-\D^{(1)}) +  \frac{1}{r_{N_2} ^{(2)} } (1-\D^{(2)}) }{  \frac{1}{r_{N_1} ^{(1)} } \sum_{k=1}^{N_1} r_k^{(1)} 
	+  \frac{1}{r_{N_2} ^{(2)} } \sum_{k=1}^{N_2} r_k^{(2)}  -1
	} \, ,
	\end{equation}
	where  $\D^{(i)}$, $\G(\g^{(i)})$ and $r_k^{(i)} $  are  defined in \eqref{pasqua1}, \eqref{pasqua2} and \eqref{pasqua3}, respectively.
	\end{Proposition}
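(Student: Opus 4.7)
My plan is to apply the chain removal machinery from Section \ref{rimozione} to $\tilde G$, which in the parallel-chain setting consists of exactly four linear chains (two copies of $\g^{(1)}$ and two copies of $\g^{(2)}$ glued at $-1_*,0_*,1_*$). After removing all intermediate vertices, only the three sites $-1_*,0_*,1_*$ survive, connected by the effective rates $r_+,r_-$ of \eqref{c2}, which are obtained directly from \eqref{rocco_hunt_1}, \eqref{rocco_hunt_2} and from the identity \eqref{supersonico} that relates $\G(\bar\g^{(i)})$ to $\G(\g^{(i)})$ via $\Delta^{(i)}$.

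Next I would invoke Propositions \ref{virulino1} and \ref{virulino2}. Proposition \ref{virulino1} gives $\bbP(X_S=\pm 1_*)=r_\pm/(r_++r_-)$ by inspection of the three-state reduced system \eqref{febbre1bis}. Proposition \ref{virulino2} gives $\bbE(S)=\psi(0_*)=(1+c(0_*))/(r_++r_-)$, where $c(0_*)$ aggregates the four exiting linear chains according to \eqref{torlototto}; this produces \eqref{venerdi}. Combining these with the formula for $v$ in Theorem \ref{zecchino} gives the preliminary expression $v=(r_+-r_-)/(1+c(0_*))$.

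The last step is the bookkeeping that turns this into \eqref{v_parallel_c}. Here I would apply Lemma \ref{7nani} with $x_i\equiv 1$ separately to the chains $\g^{(1)}$ and $\g^{(2)}$ to obtain
\[
c(\g^{(i)})+\Delta^{(i)}c(\bar\g^{(i)})=\sum_{k=1}^{N_i-1}r_k^{(i)},\qquad i=1,2,
\]
with $r_k^{(i)}$ as in \eqref{pasqua3}. Noting that $r_{N_i}^{(i)}=\G(\g^{(i)})/p_0^{(i)}$ (where $p_0^{(1)}=\xi_0^+$, $p_0^{(2)}=\eta_0^+$), I can factor $\xi_0^+/\G(\g^{(1)})$ and $\eta_0^+/\G(\g^{(2)})$ out of every term and rewrite them as $1/r_{N_1}^{(1)}$ and $1/r_{N_2}^{(2)}$. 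This converts \eqref{venerdi} into \eqref{c1} and, together with \eqref{c2} (also rewritten with the same factorization), turns $(r_+-r_-)/(1+c(0_*))$ into the announced ratio \eqref{v_parallel_c}.

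The only delicate point is the algebraic simplification: tracking which copy of the denominators $\G(\g^{(i)})$ cancels against which, and verifying that the $+1$ and the $-2$ appearing in \eqref{c1} combine with the factor $1$ in $1+c(0_*)$ to give the clean $-1$ in the denominator of \eqref{v_parallel_c}. Everything else is a mechanical application of Lemma \ref{7nani} and the chain-removal propositions already at our disposal.
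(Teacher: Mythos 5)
Your proposal follows essentially the same route as the paper's own proof: chain removal of the four linear chains in $\tilde G$ via Section \ref{rimozione}, Propositions \ref{virulino1} and \ref{virulino2} to get $v=(r_+-r_-)/(1+c(0_*))$, Lemma \ref{7nani} with $x_i\equiv 1$ to rewrite $c(\g^{(i)})+\Delta^{(i)}c(\bar\g^{(i)})=\sum_{k=1}^{N_i-1}r_k^{(i)}$, and the observation $r_{N_i}^{(i)}=\G(\g^{(i)})/p_0^{(i)}$ to land on \eqref{v_parallel_c}. The only point worth making explicit when you write it up is that the identity $r_{N_i}^{(i)}=\G(\g^{(i)})/p_0^{(i)}$ relies on periodicity of the rates, but this is immediate and the argument is correct as stated.
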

Notice that this formula matches equation	(6) of \cite{K1} (our $N_1$, $N_2$,  $r_{N_1} ^{(1)}$, $r_{N_2} ^{(2)}$, $\D^{(1)}$, $\D^{(2)}$,    $\sum_{k=1}^{N_1} r_k^{(1)} $,  $\sum_{k=1}^{N_2} r_k^{(2)} $     equal    $N$, $M$,
$r_0^{(0)}$, $r_0^{(1)}$, $\P_{(0)1}^{N}$,  $\P_{(1)1}^{M}$, $R_{N}$, $R_{M}$
  in \cite{K1}, respectively).

\subsection{Diffusion coefficient}
By using the reduction to a single cell introduced in Section \ref{1prigione}, together with the computations for the linear chain case, we are able to compute the diffusion coefficient in the parallel chains case. 

Recall that $\s^2$ is given by formula \eqref{crema} in Lemma \ref{calcolo_diff_coeff}.  
  By Lemma \ref{calcolo_v}   and since $\ds \bbE(S) = \frac{1 + c(0_*)}{r_+ + r_-} $ (see previous subsection), we get (recall  \eqref{alba_bella1} and \eqref{c1}):
  \begin{equation}\label{mattone1}
\boxed{\frac{\tilde p + \tilde q}{ \bbE(J_1)}= \frac{1}{\bbE(S)}= \frac{r_++r_-}{  \frac{1}{r_{N_1} ^{(1)} } \sum_{k=1}^{N_1} r_k^{(1)} 
	+  \frac{1}{r_{N_2} ^{(2)} } \sum_{k=1}^{N_2} r_k^{(2)}  -1}\,.}
\end{equation}
To compute $\bbE(J_1)$ we need to compute $\tilde{p}, \tilde{q}$.
To compute $\tilde{p}$, note that on the event $\{ X_{J_1} = 1_* \}$ the process must be either in $a_1$ or in $b_1$ after the first jump. By the strong Markov property, then, we have
	\[ \tilde{p} = \bbP (X_{J_1} = 1_*) = \frac{\xi_0^+}{\n_0} h_1^{(1)} + \frac{\eta_0^+}{\n_0} h_1^{(2)} \, , \]
where $ h_1^{(1)}  = \bbP_{a_1} (X_{J_1} = 1_*) $ and $  h_1^{(2)}  = \bbP_{b_1} (X_{J_1} = 1_*) $, in analogy with the notation introduced in \eqref{sabato}. 

Note that, by definition of the random time $J_1$, after the process $X$ picks a linear chain with the first jump, it does not leave it at least up to time $J_1$.
We can therefore apply the results in Section \ref{esempi1} (see the discussion after the proof of Lemma \ref{lemma_calcoli}) to find
    \begin{equation} \label{altri_h}
 	 h_1^{(1)} = \frac{1}{\G \big( \g^{(1)} \big)} , \qquad 
	h_1^{(2)} = \frac{1}{\G \big( \g^{(2)} \big)}  \, , 
	\end{equation}
where $\G \big( \g^{(i)} \big)$, $i=1,2$ are defined as in \eqref{pasqua2}. 

Hence we have (see \eqref{c2})
	\[ \tilde{p} =  \frac{\xi_0^+}{\n_0} \frac{1}{\G \big( \g^{(1)} \big)}
	+ \frac{\eta_0^+}{\n_0} \frac{1}{\G \big( \g^{(2)} \big)}  = \frac{r_+}{\n_0} \, , \]
and by symmetry
	\[ \tilde{q} =  \frac{\xi_0^-}{\n_0} \frac{1}{\G \big( \bar{\g}^{(1)} \big)}
	+ \frac{\eta_0^-}{\n_0} \frac{1}{\G \big( \bar{\g}^{(2)} \big)}  = \frac{r_-}{\n_0} \, .\]
	Due to the previous expressions for $\tilde p , \tilde q$ and due to \eqref{mattone1}, we get
	\begin{equation}\label{mattone2} 
	\boxed{\bbE (J_1) =  \frac{1}{\n_0} \Bigg(  \frac{1}{r_{N_1} ^{(1)} } \sum_{k=1}^{N_1} r_k^{(1)} 
	+  \frac{1}{r_{N_2} ^{(2)} } \sum_{k=1}^{N_2} r_k^{(2)}  -1
 \Bigg) \, . }
 \end{equation}
 It remains to compute $\bbE(J_1 \mathds{1} ( X_{J_1} = \pm 1_*))$ and $\bbE(J_1^2)$.
To this end, we condition on the position of the process after the first jump, and reduce to the linear chain case, for which all interesting quantities have already been computed.\\

Let us start with the computation of $\bbE(J_1 \mathds{1} ( X_{J_1} =  1_*))$, from which a formula for $\bbE(J_1 \mathds{1} ( X_{J_1} = - 1_*))$ will follow by symmetry.
In analogy with \eqref{birra}, if we let $j_1$ denote the first jump time of the process $X$, then we have
	\begin{equation}\label{vetro} 
	\begin{split}
	\bbE(J_1 \mathds{1} ( X_{J_1} =  1_*) )& = 
	\bbE(j_1 \mathds{1} ( X_{J_1} =  1_*)) + 
	\bbE\bigl((J_1-j_1) \mathds{1} ( X_{J_1} =  1_*,  X_{j_1} = a_1)\bigr ) \\ & + 
	 \bbE\bigl((J_1-j_1) \mathds{1} ( X_{J_1} =  1_*,  X_{j_1} = b_1)\bigr) \\
	 & = \frac{\xi_0^+  }{\n_0^2} \bbP_{a_1} (X_{J_1} = 1_*) + 
	 \frac{\eta_0^+  }{\n_0^2} \bbP_{b_1} (X_{J_1} = 1_*) \\ & + 
	 \frac{\xi_0^+}{\n_0} \bbE_{a_1} (J_1 \mathds{1} (X_{J_1} = 1_*)) +
	 \frac{\eta_0^+}{\n_0} \bbE_{b_1} (J_1 \mathds{1} (X_{J_1} = 1_*)) \\
	 & = \frac{\xi_0^+ h_1^{(1)} + \eta_0^+ h_1^{(2)}}{\n_0^2} 
	 + \frac{\xi_0^+ s_1^{(1)} + \eta_0^+ s_1^{(2)}}{\n_0} \, , 
	\end{split} 
	\end{equation}	 
where, similarly to the periodic linear case, we have called 
$s_1^{(i)}$ the quantities corresponding to 
$s_1$ in Section \ref{esempi1} where the linear chain is replaced by $\g^{(i)}$, $i=1,2$ (see \eqref{violoncello} and the discussion preceding it).


 Note that, using that $h_{k+1}^{(i)} = h_k^{(i)} + \r_k^{(i)}  ( h_{k}^{(i)}  - h_{k-1}^{(i)}  ) $ together with $h_0^{(i)} = 0$ for $i=1,2$, one can obtain $h_{2}^{(i)}  , \ldots , h_{N_i-1}^{(i)} $ from \eqref{altri_h}. 
Moreover, similarly to \eqref{violoncello}, we find $s_1^{(i)} = \Psi^{(i)} / \G \big( \g^{(i)} \big)$, $i=1,2$, where we have set
	\[ \Psi^{(1)} := \sum_{k=1}^{N_1 -1} \sum_{n=1}^k \frac{ h_n^{(1)} }{ \xi_n^+} 
	\prod_{j=n+1}^k \r_j^{(1)} \, , \qquad 
	 \Psi^{(2)} := \sum_{k=1}^{N_2 -1} \sum_{n=1}^k \frac{ h_n^{(2)} }{ \eta_n^+} 
	\prod_{j=n+1}^k \r_j^{(2)} \, . \]
	
Putting all together we obtain 
	\[ \begin{split}  
	\bbE (J_1 \mathds{1}(X_{J_1} = 1_*)) & = 
	\frac{ \xi_0^+ / \G \big( \g^{(1)} \big) + \eta_0^+ / \G \big( \g^{(2)} \big) }{\n_0^2}
	+ \frac{ \xi_0^+ \Psi^{(1)} / \G \big( \g^{(1)} \big) + \eta_0^+ \Psi^{(2)}/ \G \big( \g^{(2)} \big) }{\n_0} \\
	& = \frac{ \xi_0^+}{\n_0 \G \big( \g^{(1)} \big)} \bigg( \frac{1}{\n_0} + \Psi^{(1)} \bigg)
	+ \frac{ \eta_0^+}{\n_0 \G \big( \g^{(2)} \big)} \bigg( \frac{1}{\n_0} + \Psi^{(2)} \bigg) \, . 
	\end{split} \]

By symmetry, then, we also have
	\[  \begin{split}  
	\bbE (J_1 \mathds{1}(X_{J_1} =- 1_*)) & 
	 = \frac{ \xi_0^-}{\n_0 \G \big( \bar{\g}^{(1)} \big)} \bigg( \frac{1}{\n_0} + \bar{\Psi}^{(1)} \bigg)
	+ \frac{ \eta_0^-}{\n_0 \G \big( \bar{\g}^{(2)} \big)} \bigg( \frac{1}{\n_0} + \bar{\Psi}^{(2)} \bigg) \\
	& = 
	\frac{ \xi_0^+ \D^{(1)} }{\n_0 \G \big( \g^{(1)} \big)} \bigg( \frac{1}{\n_0} + \bar{\Psi}^{(1)} \bigg)
	+ \frac{ \eta_0^+ \D^{(2)} }{\n_0 \G \big( \g^{(2)} \big)} \bigg( \frac{1}{\n_0} + \bar{\Psi}^{(2)} \bigg) \, . 
	\end{split} \]
where $\G \big( \bar{\g}^{(i)} \big)$ and $\bar{\Psi}^{(i)}$ are defined as $\G \big( \g^{(i)} \big)$ and $\Psi^{(i)}$ but for the reversed chain $\bar{\g}^{(i)}$, and we have used the relation 
$\G \big( \bar{\g}^{(i)} \big) = \r_0^{(i)} \G \big( \g^{(i)} \big) / \Delta^{(i)}$ (cf. \eqref{supersonico}).

Going back to \eqref{crema}, then, we find
	\begin{equation}\label{pezzo1}
	\begin{split}
	\bbE (J_1 \mathds{1}(X_{J_1} = 1_*)) - \bbE(J_1 \mathds{1}(X_{J_1} = -1_*)) = &
	 \frac{ \xi_0^+}{\n_0 \G \big( \g^{(1)} \big)} \bigg( \frac{1 - \D^{(1)}}{\n_0} + \Psi^{(1)} - 
	 \D^{(1)} \bar{\Psi}^{(1)}\bigg) + \\
	 &  \frac{ \eta_0^+}{\n_0 \G \big( \g^{(2)} \big)} \bigg( \frac{1 - \D^{(2)}}{\n_0} + \Psi^{(2)} - 
	 \D^{(2)} \bar{\Psi}^{(2)}\bigg) \, . 
	 \end{split} 
	 \end{equation}
In order to simplify the above expression, we again use Lemma \ref{7nani} to get
	\begin{equation*}
	\Psi^{(i)} = \sum_{k=1}^{N_i-1} z_k^{(i)} h_k^{(i)} \, , 
	\qquad 
	\D^{(i)} \bar{\Psi}^{(i)} = \sum_{k=1}^{N_i-1} w_k^{(i)} (1-h_k^{(i)})\, ,
	\end{equation*}
where the $z_k^{(i)}$'s and the $w_k^{(i)}$'s have been defined in \eqref{z100}, \eqref{w100}, and here refer to the chain $\g^{(i)}$, $i=1,2$.

In conclusion, in light of the above expressions  and since $z_k^{(i)}+w_k^{(i)}=r_k^{(i)}$,  \eqref{pezzo1} becomes
	\begin{equation}\label{pezzo1bis}
\boxed{
	\begin{split}
	 \bbE (J_1 \mathds{1}(X_{J_1} = 1_*)) & - \bbE(J_1 \mathds{1}(X_{J_1} = -1_*))  \\
	& =\frac{ \xi_0^+}{\n_0 \G \big( \g^{(1)} \big)} \bigg( \frac{1 - \D^{(1)}}{\n_0} + 
	\sum_{k=1}^{N_1-1} ( h_k^{(1)}r_k^{(1)} -w_k^{(1)}) \bigg) \\
	&
	+\frac{ \eta_0^+}{\n_0 \G \big( \g^{(2)} \big)} \bigg( \frac{1 - \D^{(2)}}{\n_0} +
	\sum_{k=1}^{N_2-1}  ( h_k^{(2)}r_k^{(2)} -w_k^{(2)})  \bigg) \, . 
	\end{split} }
	\end{equation}
	We now focus on the computation of $\bbE(J_1^2)$. 
By conditioning on the position of the process after the first jump, and using the Markov property, we find, in analogy with \eqref{chiave},
	\begin{equation} \label{violino}
	\bbE(J_1^2) = \frac{ 1}{\n_0} \big[ \xi_0^+ \bbE_{a_1}(J_1^2) + \xi_0^- \bbE_{a_{N_1-1}}(J_1^2)  \big] 
	+ \frac{ 1}{\n_0} \big[ \eta_0^+ \bbE_{b_1}(J_1^2) + \eta_0^- \bbE_{b_{N_2-1}}(J_1^2)  \big] 
	+  \frac{2}{\n_0} \bbE(J_1) \, . 
	\end{equation}
Note that all expectations appearing in the above formula can be computed as in the linear chain case since, as already pointed out, once the process $X$ picks a chain with the first jump, it does not leave it at least up to time $J_1$.

Hence, in analogy with \eqref{conf1} and \eqref{conf2}, we find
	\begin{equation} \label{pezzo2}
	\begin{split}
	 \bbE_{a_1} (J_1^2) = \frac{\Phi^{(1)}}{	\G \big( \g^{(1)} \big) }, \qquad \qquad \qquad& \qquad 
	 \bbE_{b_1} (J_1^2) = \frac{\Phi^{(2)}}{	\G \big( \g^{(2)} \big) },  \\
	 \bbE_{a_{N_1-1}} (J_1^2) = \frac{\bar{\Phi}^{(1)}}{	\G \big( \bar{\g}^{(1)} \big) }
	= \frac{\D^{(1)} \bar{\Phi}^{(1)}}{	\r_0^{(1)} \G \big( {\g}^{(1)} \big) }\, , \quad & \quad 
	 \bbE_{b_{N_2-1}} (J_1^2) = \frac{\bar{\Phi}^{(2)}}{	\G \big( \bar{\g}^{(2)} \big) }
	= \frac{\D^{(2)} \bar{\Phi}^{(2)}}{	\r_0^{(2)} \G \big( {\g}^{(2)} \big) }\, ,
	\end{split}
	\end{equation}
where $\Phi^{(i)}$ is defined as in \eqref{conf1} but for the chain $\g^{(i)}$, $i=1,2$. 

Another application of Lemma \ref{7nani} yelds
	\begin{equation}\label{viola}
	 \Phi^{(i)} + \D^{(i)} \bar{\Phi}^{(i)} = 
	 \sum_{n=1}^{N_i -1} 2 k_n^{(i)} ( z_n^{(i)} + w_n^{(i)} ) = 
	 \sum_{n=1}^{N_i -1} 2 k_n^{(i)}  r_n^{(i)} \, , 
	 \end{equation}
for $i=1,2$, where $k_n^{(1)} := \bbE_{a_n}(J_1)$ and $k_n^{(2)} := \bbE_{b_n}(J_1)$ can be computed from $k_0^{(i)}:=0$, $k_1^{(i)}$ by mean of the relations
	\[ \begin{split} 
	& k_{n+1}^{(1)} = k_n^{(1)} + \r_n^{(1)} ( k_n^{(1)} - k_{n-1}^{(1)} ) - \frac{1}{\xi_n^+} \\
	& k_{n+1}^{(2)} = k_n^{(2)} + \r_n^{(2)} ( k_n^{(2)} - k_{n-1}^{(2)} ) - \frac{1}{\eta_n^+} \, .
	\end{split} \]
 In particular, $k_k^{(i)}$ can be taken as in Proposition \ref{calzino}, now referred to the chain $\g^{(i)}$.
Plugging \eqref{pezzo2} and \eqref{viola} into \eqref{violino} we obtain
\[
	\bbE(J_1^2)  = \frac{2 }{\n_0} \bbE(J_1) + 
	\frac{ \xi_0^+}{\n_0 \G \big( \g^{(1)} \big)} \Big( \Phi^{(1)} + \D^{(1)} \bar{\Phi}^{(1)} \Big) 
	+ \frac{ \eta_0^+}{\n_0 \G \big( \g^{(2)} \big)} \Big( \Phi^{(2)} + \D^{(2)} \bar{\Phi}^{(2)} \Big) \,.\]
Hence, we get	\begin{equation} \label{pezzo2bis}
\boxed{	\bbE(J_1^2) = \frac{2 }{\n_0} \bbE(J_1) + 
	\frac{ \xi_0^+}{\n_0 \G \big( \g^{(1)} \big)} \Big( \sum_{n=1}^{N_1 -1} 2 k_n^{(1)}  r_n^{(1)} \Big) 
	+ \frac{ \eta_0^+}{\n_0 \G \big( \g^{(2)} \big)} \Big(  \sum_{n=1}^{N_2 -1} 2 k_n^{(2)}  r_n^{(2)}\Big)\,.} 
	\end{equation}

\begin{Conclusion}\label{grifondoro}
The diffusion coefficient $\s^2$ is obtained by the general formula \eqref{crema} where  the addenda in the r.h.s. are specified by \eqref{v_parallel_c}, \eqref{mattone1}, \eqref{mattone2}, \eqref{pezzo1bis}, \eqref{pezzo2bis}, where 
$\D^{(i)}$, $\G(\g^{(i)})$ and $r_k^{(i)} $  are  defined in \eqref{pasqua1}, \eqref{pasqua2} and \eqref{pasqua3}, respectively; $r_+$ and $r_-$ are defined in \eqref{c2}; $\nu_0:= \xi_0^+ + \xi_0^-+\eta_0^++\eta_0^-$;
  $w_k^{(i)}$, $k_k^{(i)}$ and $h_k^{(i)}$ are defined as  in Proposition \ref{calzino}  now referred to the chain $\g^{(i)}$; 

\end{Conclusion}


\section{Examples and comparison with previous results}\label{confronto}

In this section we consider some cross--checks of our formulas (Subsections \ref{mirta1} and \ref{mirta2})  and compare our results with the ones in \cite{K1} for the parallel--chain model, showing that the diffusion coefficient derived in \cite{K1} agrees neither with our computations nor with some general principles. By correcting formula (26) and (28)  of \cite{K1} as explained in the introduction, at least in the special cases consider here we then recover the result of \cite{K1}.

\subsection{Homogeneous random walk on $\bbZ$ with $N$ periodic rates}\label{mirta1} As a  cross--check of Proposition \ref{calzino} we consider 
 the $N$-periodic linear model as in Section \ref{esempi1}, and assume that 
	\[ \xi_i^+ =: \a \, , \quad \xi_i^- =: \b \qquad \forall i=0,1\ldots N \, . \]
As a consequence, $\r_0 = \r_1 = \cdots = \r_N = \frac{\b}{\a} =:\r$. 
 We compute the asymptotic velocity and diffusivity from Proposition \ref{calzino}. 
When  $\a= \b$  it holds $r_k= N/\a$ and $\D=1$, thus implying that $v= 0$ and $\s^2= 2\a /N^2$. Let us now take $\a \not =\b$.
 We have
	\[ \D = \r^N , \quad
	r_k =\frac{1-\r^N}{\a (1-\r )} , \quad
	\Lambda_k = \frac{1-\r^k}{1-\r} , \quad
	\Upsilon_k = \frac{1}{\a (1-\r)} \bigg( k - \frac{1-\r^k}{1-\r} \bigg) \]
for $1 \leq k \leq N$. 
Moreover, we get
	\[ w_k = \frac{\r^{N-k} - \r^N}{ \a (1-\r )} , \quad
	h_k = \frac{1-\r^k}{1-\r^N}, \quad
	k_k = \frac{1}{\a (1-\r)} \bigg( N \frac{1-\r^k}{1-\r^N} -k\bigg) \]
for $1 \leq k \leq N-1$.
Following \eqref{razzo} and \eqref{leopardi}, then, we find
	\[ \begin{split} 
	& \sum_{k=1}^{N-1} 2k_k r_k = \frac{2 (1-\r^N )}{[ \a (1-\r )]^2 } \Big( \frac{N^2}{1 - \r^N} - \frac{N}{1-\r} - \frac{N(N-1)}{2} \Big) \, , 
	\\ &
	\sum_{k=1}^{N-1} (  w_k - h_k r_k ) = \frac{1}{\a (1-\r )} \Big[  \frac{2(1-\r^N)}{1-\r} -N -1 -(N-1)\r^N \Big] \, ,  
	\end{split} \]
from which \[ v = \frac{\a - \b}{N}\,,\;\;\;\;\; \s^2 = \frac{\a + \b}{N^2} \,,\] in agreement with our discussion preceding  Subsection \ref{mezzaluna}.

\subsection{Random walk on $\bbZ$ with $2$ periodic rates}\label{mirta2}
Consider the $N$ periodic linear model as in Section \ref{esempi1}, and set $N=2$. Let $\r_0 = \frac{\xi_0^-}{\xi_0^+}$ and $\r_1 = \frac{\xi_1^-}{\xi_1^+}$. We again compute asymptotic velocity and diffusivity from Proposition \ref{calzino}. 
We have:
	\[ \begin{split}
	& \D = \r_0 \r_1 , \quad
	r_1 = \frac{1}{\xi_1^+}(1+\r_0) , \quad
	r_2 = \frac{1}{\xi_0^+}(1+\r_1) , \quad
	\Lambda_1 = 1, \quad
	\Lambda_2 = 1 + \r_1 , \quad \\ &
	\Upsilon_1 =0, \qquad
	\Upsilon_2 = \frac{1}{\xi_1^+} , \qquad 
	 w_1 = \frac{\r_0}{\xi_1^+} , \qquad
	h_1 = \frac{1}{1+\r_1}, \qquad 
	k_1 = \frac{1}{\xi_1^+(1+\r_1)} \, . 
	\end{split} \] 
According to \eqref{razzo} and \eqref{leopardi}, then, we find 
	\[ \sum_{k=1}^{N-1} 2 k_k r_k = 2 k_1 r_1 = \frac{2}{(\xi_1^+)^2} \frac{1+\r_0 }{1+\r_1}, \qquad 
	\sum_{k=1}^{N-1} (w_k - h_k r_k) =   w_1-h_1 r_1 = \frac{1}{\xi_1^+} \Big(   \r_0 - \frac{1+\r_0}{1+\r_1}\Big) , \]
	from which we obtain
	\begin{align}
	 v& = \frac{ 1 - \r_0 \r_1}{ \frac{1+\r_0}{\xi_1^+} + \frac{1+\r_1}{\xi_0^+} } 	
	= \frac{ \xi_0^+ \xi_1^+ - \xi_0^- \xi_1^-}{\xi_0^+ + \xi_1^+ + \xi_0^- + \xi_1^-} \,,\\
	\s^2&  =  \frac{ 1 + \r_0 \r_1}{\frac{1+\r_0}{\xi_1^+} + \frac{1+\r_1}{\xi_0^+} } 	
	- \frac{ 2(1 - \r_0 \r_1)^2}{\big( \frac{1+\r_0}{\xi_1^+} + \frac{1+\r_1}{\xi_0^+} \big)^3 \xi_0^+ \xi_1^+}  \nonumber\\
	&= \frac{ \xi_0^+ \xi_1^+ +\xi_0^- \xi_1^-}{\xi_0^+ + \xi_1^+ + \xi_0^- + \xi_1^-}
	-2\frac{ (\xi_0^+ \xi_1^+ - \xi_0^- \xi_1^-)^2}{(\xi_0^+ + \xi_1^+ + \xi_0^- + \xi_1^-)^3}
	\, . \label{diff_2}
	\end{align}
The above expressions coincide with the correspondent ones in \cite{D}, equations  (49) and (47) there.

\subsection{Parallel chains with $N=M=2$}
Consider the parallel chains model analysed in Section \ref{esempi3}, and set $N=M=2$. Moreover, let
	\[ \xi_0^+ = \xi_{a_1}^+ = \eta_0^+ = \eta_{b_1}^+ =: \a , \qquad 
	\xi_0^- = \xi_{a_1}^- = \eta_0^- = \eta_{b_1}^- =: \b  \] 
and consequently $\r_0^{(1)} = \r_1^{(1)} = \r_0^{(2)} = \r_1^{(2)}=\b/\a =: \r$. Note that, with this choice, the upper and lower chains are identical. 
We compute the asymptotic velocity according to Proposition \ref{ovetto}.  When   $\a=\b$, it holds   $\rho=\D^{(1)}= \D^{(2)}=1$ and  we get $v=0$. 
When $\alpha \not = \beta$, note that\[\D^{(1)} = \D^{(2)} = \r^2\,,\qquad r_k^{(1)} = r_k^{(2)} = \frac{1}{\a} (1+\r )  \text{ for } k=1,2
\] (see \eqref{pasqua1} and \eqref{pasqua3}), which gives
	\begin{equation}\label{usignolo} v = \frac{2}{3} \a (1-\r ) = \frac{2}{3} (\a - \b ) \, . \end{equation}
We next compute the diffusion coefficient $\s^2$ according to Conclusion \ref{grifondoro}, i.e.  putting together  \eqref{crema} and \eqref{mattone1}, \eqref{mattone2}, \eqref{pezzo1bis}, \eqref{pezzo2bis}. Recall the definition of $r_\pm$ in \eqref{c2}.  First we consider the case $\a=\b$. Since $v=0$
 it holds  $\s^2=(\tilde p + \tilde q )/\bbE (J_1)$ by \eqref{crema} and  Conclusion \ref{grifondoro}. We have $r_+=r_-=\a$, $r^{(1)}_k= r^{(2)}_k=2/\a$ and by \eqref{mattone1} we conclude that $\s^2=2\a/3$. We now consider the case $\a \not = \b$ (i.e. $\r\not =1$).
 We have:
	\[  \begin{split} 
	& r_+ = \frac{2\a}{1+\r} , \qquad
	r_- =   \frac{2\a \r^2}{1+\r} , \qquad
	\G (\g^{(1)}) = \G (\g^{(2)}) = 1+\r , \qquad  \\
	& h_1^{(1)} = h_1^{(2)} = \frac{1}{1+\r }, \quad \qquad 
	w_1^{(1)} = w_1^{(2)} = \frac{\r}{\a} , \quad \qquad 
	k_1^{(1)} = k_1^{(2)} = \frac{1}{\a (1+\r )} \, .
	\end{split} \] 
Hence we can write down the different terms in \eqref{crema}, namely:
	\[ \begin{split} 
	& \bbE (J_1) =  \frac{3}{2\a (1+\r )} , \qquad \frac{ \tilde{p} + \tilde{q}}{\bbE (J_1) } = \frac{ 2\a (1+\r^2)}{3(1+\r )} 
	, \qquad
	\bbE (J_1^2) = \frac{\bbE (J_1)}{\a (1+\r )} + \frac{2}{[ \a (1+\r )]^2} ,\\
	&  \bbE (J_1 \mathds{1} (X_{J_1} = 1_*)) - \bbE (J_1 \mathds{1} (X_{J_1} = -1_*)) = 
	\frac{3(1-\r)}{2\a (1+\r )^2} , 
	\end{split} \]
from which we obtain
	\begin{equation}\label{falchetto}\s^2 = \frac{2\a}{27 (1+\r )} (5\r^2 + 8\r + 5)=\frac{10 \a^2 + 16 \a \b + 10 \b^2}{27(\a+\b)} \, . \end{equation}
	 Note that the above expression covers also the case $\a=\b$.

We point out that the case of identical upper and lower chains can be reduced to the linear case via a change of rates. We use a lumping procedure considering the map 
$f: \cV \mapsto \bbZ$ mapping points $n_*$ to $2n$ for $n \in \bbZ$, and mapping $a_1^{n
}$ and $b_1^{n}$ to $2n+1$.  
 In general, the image of a Markov chain is not a Markov chain.
 In our case , given $k \in \bbZ$ and given $ x \in f^{-1}(k)$, the sum  $\sum _{y \in f^{-1}(k\pm 1) }r(x,y)$ does not depend on $x$. It then follows  
(cf. e.g. \cite{TK}[Theorem 2.11]) that 
   $( f(X_t) )_{t \in \bbR_+}$ is indeed a continuous--time  Markov chain on $\bbZ$ with periodic rates of periodicity $2$, given by 
   $ \bar  r(k, k\pm 1):=\sum _{y \in f^{-1}(k\pm 1) }r(x,y)$ where $x$ is an arbitrary state in $f^{-1}(k)$. Using the notation for the random walks on $\bbZ$ with $2$ periodic rates, we have 
     $\xi_0^+ = 2\a $, $\xi_0^- = 2\b $, $\xi_1^+ = \a$, $\xi_1^- = \b$. Note that $\r_0 = \r_1 = \b / \a$.
      By applying \eqref{diff_2}, then, we find 
	\[\s^2 =  
	 \frac{2\a^2+2\b^2}{3\a+3\b} - 2\frac{(2 \a^2 - 2 \b^2)^2}{(3\a+3 \b)^3}=
	 \frac{10 \a^2 + 16 \a \b + 10 \b^2}{27(\a+\b)} \,,  \]
	 in agreement with our result  \eqref{falchetto} for the parallel-chain model.

We now compare our formula with the one obtained in \cite{K1}[Section II]. We take $d=2$ there, $d$ being the distance between  neighboring sites $n_*$  in \cite{K1}. The diffusion constant $D$ given in \cite{K1} is defined as $\frac{1}{2} \lim_{t \to \infty}
\frac{d}{dt} [ \langle x^2 (t) \rangle - \langle x(t) \rangle ^2 ]$,  where $x(t)$ corresponds to our $f(X_t)$. Our diffusion constant $\s^2$ formally corresponds to $\lim _{t \to \infty} \frac{1}{t} [ \langle (X_t^*)^2  \rangle - \langle X^*_t \rangle ^2] $. Since $|X^*_t- f(X_t)/2| \leq 1$, it must be $D= \frac{\s^2}{2}$. We now show that 
\begin{equation}\label{primula}
D\not = \frac{\s^2}{2}=  \frac{5 \a^2 +8 \a \b + 5\b^2}{27(\a+\b)} \,.
\end{equation}
Note that the last member is the correct value due to the lumping procedure.

\medskip

To check \eqref{primula} we compute $D$ using  the notation of  \cite{K1}[Section II]. We have  $D = D_0 + D_1 + D_2 + D_3$, and we now compute the constants $D_i$'s.  We treat the case $\a \not = \b$.
  Note that $N=M=2$, $ u_i=\a_i=\a$, $w_i= \b_i=\b$. In particular, equations (3) and (4) in \cite{K1} read
$$ \P_{(0)j}^k=\P_{(1)j}^k=\P_{(0)j}^{\dag,k}=\P_{(1)j}^{\dag,k}=\left( \frac{\b}{\a}\right)^{k -j+1} = \rho^{k -j+1} \,. $$
Formulas (9), (10) in \cite{K1} read
\[ R_N=R_M=\frac{2}{\a} (1+\rho) \,, \qquad r_j^{(0)}= r_j^{(1)}=\frac{1}{\a} (1+\rho)  \,.
\]
Formulas (7), (8) in \cite{K1} read
\[ V_0=V_1= \frac{2 \a}{1+\rho}(1-\rho^2)\frac{1}{3}=\frac{2\a(1-\rho)}{3}=\frac{2(\a-\b)}{3}\,,\]
thus implying that $V=V_0+V_1= (4/3) ( \a-\b)$ due to (6) in \cite{K1}. Recall that $V= \lim _{t \to \infty}\frac{d}{dt} \langle x(t) \rangle $ in \cite{K1}. 
Comparing with \eqref{usignolo} we get that $V= 2 v$  as it must be.

We now note that formulas (16),(17), (18), (19) in \cite{K1} read 
\[
s_j^{(0)}= \frac{1}{\a} (1+ \rho)=:s\,, \;\;\;\; b_j^{(0)}= b_j^{(1)}=\frac{1}{3}\,, \;\;\;\; U_N= \frac{2}{3}(1+\rho)\,, \;\;\;\; S_N= \frac{2}{\a}(1+\rho) \,.
\]
Formula (15) in \cite{K1} then becomes
\[
 J_0=\frac{1}{2} s 2 (\a-\b) \frac{1}{3}+\frac{2}{3}(1+\rho)+\frac{2}{3}\a (1-\rho) \frac{2}{\a}(1+\rho) -
 \frac{2}{3}\a (1-\rho)s\frac{1}{3}=\frac{13}{9} (1-\rho^2)+\frac{2}{3} (1+\rho)\,.
 \]
 We conclude that (14) in \cite{K1} reads
 \begin{equation}\label{dino0} D_0= V \frac{2}{3}- V \cdot 2 + V \frac{J_0}{1-\rho^2}=  (4/3)\a ( 1-\rho)\left[\frac{2}{3}-2  
 +\frac{13}{9}+ \frac{2}{3} \frac{1}{1-\rho}\right]=\frac{28}{27}\a-\frac{4}{27}\b \,.
 \end{equation}
 Since the lower and the upper chains are identical, the constant $D_1$ defined in (21) of \cite{K1}
equals $D_0$, while the constant $D_3$ defined in (27) of  \cite{K1} is zero. We then move to $D_2$ defined in (25) of \cite{K1}. To compute it we observe that  (26) in \cite{K1} reads
\[
J_2= \left[ \frac{1}{3} + \frac{V}{2\a}\right]+ \left[ \frac{\rho}{2\a} 2 (\a-\b)\frac{1}{3} + \frac{\rho}{3} + \frac{V \rho}{2 \a} \frac{1}{3}\right]= \frac{9+2 \rho - 5 \rho^2}{ 9}\,,
\]
where, in the second member,  the two addenda inside the  $[\cdot ]$--brackets correspond to $j=0$ and $j=1 $ in \cite{K1}, respectively.  We then get
\footnote{We point out that in the definition of $J_2$  (cf. (25) in \cite{K1}) the term $\P ^0_{(1)1}$ appears although not defined in  (4) of \cite{K1}. Indeed,  $\P ^0_{(1)1}=1$ (in agreement with the rule that a product on an empty set is one). This can be checked as follows. The terms $D_2$ comes from the last addendum in \cite{K1}[eq. (A44)] involving $T^{(1)}_0$ defined in \cite{K1}[eq. (A39)]. By comparing this term with $D_2$, and therefore $J_2$,   one gets that in \cite{K1}  $T_0^{(1)}$ can be thought as $-\frac{1}{\a_0} \sum _{k=0}^{M-1} y_k^{(1)} \P _{(1)1}^k/ [1-\P ^M_{(1)1}]$, $y_k^{(1)}$ being defined in \cite{K1}[eq. (A37)], with the convention that  $\P ^0_{(1)1}=1$.}, by (25) in \cite{K1},
\begin{equation}\label{dino2}D_2=- \frac{V J_2}{1-\rho^2}=   -\frac{4}{3} \a \frac{J_2}{1+\rho}=
\frac{4\a}{27} \frac{ 5 \rho^2-2 \rho -9}{1+\rho}\,.
\end{equation}

Finally, by (13) in \cite{K1}, we have (recall \eqref{dino0} and \eqref{dino2})
\begin{equation}\label{dino3}
\begin{split} D&= D_0+D_1+D_2+D_3= 2 D_0+ D_2= \frac{4\a}{27}\left[ (14-2\rho)+\frac{ 5 \rho^2-2 \rho -9}{1+\rho}\right]\\&= \frac{4\a}{27} \frac{3 \rho^2+4\rho+5}{1+\rho}
 =\frac{20 \a^2+16 \a \b + 12 \b^2}{27(\a+\b)}\,,
\end{split}
\end{equation}
thus proving \eqref{primula}.

We point out that when $\b \to 0$, we get $D= (20/27) \a$ in agreement with (31) in \cite{K1}. Due to \eqref{falchetto} in this special case the relation $D= \s^2/2$ is verified.

On the other hand,  replace   in (26) of \cite{K1}  the term    $ \sum _{i=0}^{N-1} b_i^{(0)}$ with 
$ \sum _{i=1}^{N-1} b_i^{(0)}$. Then one would get
\[
J_2= \left[ \frac{1}{3} + \frac{V}{2\a}\right]+ \left[ \frac{\rho}{2\a} 2 (\a-\b)\frac{1}{3} + \frac{\rho}{3} + \frac{V \rho}{2 \a} \frac{2}{3}\right]= \frac{9+4 \rho - 7 \rho^2}{ 9}\,.
\]By correcting \eqref{dino2} and \eqref{dino3} as consequence, one gets  an identity in \eqref{primula}. We have indeed checked that our general formulas confirm the corrected version of \cite{K1} in the case $M=N=2$ and generic rates.

\section{Proof of Propositions \ref{virulino1} and \ref{virulino2}} \label{EA}
In this section we collect the proofs of Propositions \ref{virulino1} and \ref{virulino2}.

\subsection{Proof of Proposition \ref{virulino1}} Take  a pair $x,y$ in $\tilde V$ and   and let  $\g=(x_0,x_1, \dots, x_n)$   be a linear chain in $\cC\cup \bar \cC$  from $x=x_0$ to $y=x_n$, if it exists. Recalling \eqref{aereo}, 
by the linear system \eqref{febbre1} it holds
\[  r_i^- \bigl
[ 
\phi(x_i) -\phi(x_{i-1}) \bigr]+  r_i^+\bigl[\phi(x_i)- \phi (x_{i+1})\bigr]=0 \,,  \qquad  1\leq i \leq n-1\,.
\]
The above system of equations trivially leads to the identity $\phi(x_{i+1})-\phi(x_i)= c_i \bigl(\phi(x_1)- \phi(x_0)\bigr)$ for all  $i : 1\leq i \leq n-1$, where 
\begin{equation}\label{cipcip}
 c_i:= \begin{cases}
1 & \text{ if } i=0\,,\\
\frac{r_1^-}{r_1^+}\frac{r_2^-}{r_2^+} \cdots \frac{r_i^-}{r_i^+} & \text{ if } 1\leq i \leq n-1\,.
\end{cases}
\end{equation}
Note that $\sum_{i=0}^{n-1} c_i = \G(\g)$ defined in \eqref{alexian}.
 By a telescoping argument, one gets that 
\begin{equation}\label{film_epic}\phi(x_i) =\phi(x_0)+\bigl( \sum _{j=0}^{i-1} c_j\bigr)\bigl ( \phi(x_1)- \phi(x_0)\bigr)\,, \qquad 1\leq i \leq n \,.
\end{equation}
Taking $i=n$ in \eqref{film_epic}  one  obtains $\phi(x_1)- \phi(x_0)$ in terms of $\phi(x_n)- \phi(x_0)=\phi(y)-\phi(x)$ and therefore
\begin{equation}\label{pierre} \phi (x_i)= \phi (x) + \frac{\sum_{j=0}^{i-1}c_i}{\G(\g)} \bigl[ \phi (y)- \phi(x) \bigr]\,.
\end{equation}
In general, if there are more linear chains $\g^{(1)}, \g^{(2)}, \dots, \g^{(k)}$ from $x $ to $y$  in $\cC \cup \bar \cC$, then (using the  notation 
introduced before Prop. \ref{virulino1}) it must be
\begin{equation}\label{pierreZZZ} \phi (x^{(s)} _i )= \phi (x) + \frac{\sum_{j=0}^{i-1}c_i^{(s)} }{\G\bigl(\g^{(s)} \bigr)} \bigl[ \phi (y)- \phi(x) \bigr]\,,
\end{equation}
where $c_i^{(s)}$ is the above defined constant $c_i$ referred to $\g= \g^{(s)}$.
By the linear system \eqref{febbre1} it holds
\[ \bigl[   \sum _{z: (x,z) \in \tilde E } \tilde r (x,z)\bigr]\phi(x)=  \sum _{\substack{ z \in \tilde V\setminus \{x_1^{(1)}, \dots,x_1^{(k)}  \} \,:\\ (x,z) \in \tilde E}}\tilde r (x, z) \phi(z) +\sum _{s=1}^k  \tilde r(x,x_1^{(s) }) \phi(x_1^{(s)})\,.
\]
Due to  \eqref{pierreZZZ}    the above identity is equivalent to 
\begin{equation}\label{libro1}
\begin{split}
\a  \phi(x)& = \sum _{\substack{ z \in \tilde V\setminus \{x_1^{(1)}, \dots, x_1^{(k)} \} \,:\\ (x,z) \in \tilde E}}\tilde  r (x, z) \phi(z) +\left[\sum _{s=1}^k  \frac{\tilde r(x,x_1^{(s) })}{ \G\bigl( \g^{(s)} \bigr)} \right] \phi(y) \\
&=  \sum _{\substack{ z \in \tilde V\setminus \{x_1^{(1)}, \dots, x_1^{(k)} , y\} \,:\\ (x,z) \in \tilde E}}\tilde  r (x, z) \phi(z) +\bar r(x,y) \phi(y)\,,
\end{split}
\end{equation}
where $\a $ equals the sum of the coefficients in the linear combinations of $\phi$ in the r.h.s. Repeating the above procedure   as $y$ varies in $\tilde V$ we get the first equation in \eqref{febbre1bis}. The remaining equations $\phi(-1_*)=0$ and $\phi(1_*)=1$  are trivially satisfied.

\subsection{Proof of Proposition  \ref{virulino2}} Take  a pair $x,y$ in $\tilde V$ and   and let  $\g=(x_0,x_1, \dots, x_n)$  be a linear chain in $\cC\cup \bar \cC$ from $x$ to $y$, if it exists. 
  Recalling \eqref{aereo}, 
by the linear system \eqref{febbre2} it holds
\[  r_i^- \bigl[ \psi(x_i)-
\psi(x_{i-1})\bigr] + r_i^+\bigl[\psi(x_i)-\psi (x_{i+1})\bigr]  =1 \,, 
\]
for any $ i : 1\leq i \leq n-1$.
Recall \eqref{cipcip}. The above system of equations trivially leads to the identity 
\[ \psi(x_{i+1})-\psi(x_i)=c_i \bigl[ \psi(x_{1})-\psi(x_{0})\bigr]-
 \sum _{j=1} ^i \frac{1}{r_j^+} \frac{r_{j+1}^-}{r_{j+1}^+} \cdots \frac{r_i^-}{r_i^+}
\]
for $1\leq i \leq n-1$, where by convention    the last term  equals $1/r_1^+$ when $j=1$.
 By a telescoping argument, one gets that 
\begin{equation*}
\psi(x_i) =\bigl( \sum _{m=0}^{i-1} c_m\bigr)\bigl [\psi(x_1)- \psi(x_0)\bigr]+\psi(x_0)- \sum_{m=1}^{i-1} \sum _{j=1} ^m \frac{1}{r_j^+} \frac{r_{j+1}^-}{r_{j+1}^+} \cdots \frac{r_m^-}{r_m^+}  \,, \qquad 1\leq i \leq n \,,
\end{equation*}
where by convention  the sum $\sum_{m=1}^{i-1}$ is set equal to zero if   $i=1$.
Taking $i=n$ in the above identity and recalling \eqref{alexian}, one  obtains 
\begin{equation}\label{pierre_bis} 
\psi (x_1)- \psi (x) = \frac{1}{\G(\g)} \bigl[ \psi (y)- \psi(x) \bigr]  + \frac{1}{\G(\g)}  \sum_{1\leq j \leq m \leq n-1} \frac{1}{r_j^+} \frac{r_{j+1}^-}{r_{j+1}^+} \cdots \frac{r_m^-}{r_m^+}   \,.
\end{equation}
Recalling the definition of $c(\g)$ given in \eqref{rachele}
we get 
\begin{equation}\label{pierre_bis1} \psi (x_1)= \psi (x) + \frac{1}{\G(\g)} \bigl[ \psi (y)- \psi(x) \bigr]  + \frac{c(\g) }{\G(\g)}  \,.
\end{equation}

Let  $\g^{(1)}, \g^{(2)}, \dots, \g^{(k)}$ be the linear chains in  $\cC\cup \bar \cC$ from  $x$ to $y$. Recall the notation introduced before Proposition \ref{virulino1}. By the linear system \eqref{febbre2} it holds
\[ \bigl[   \sum _{z: (x,z) \in \tilde E } \tilde r (x,z)\bigr]  \psi(x)=  1+\sum _{\substack{ z \in \tilde V\setminus \{x_1^{(1)}, \dots,x_1^{(k)}  \} \,:\\ (x,z) \in \tilde E}}\tilde r (x, z) \psi(z) +\sum _{s=1}^k  \tilde r(x,x_1^{(s) }) \psi(x_1^{(s)})\,.
\]
Due to  \eqref{pierre_bis1}    the above identity is equivalent to 
\begin{equation}\label{libro1_bis}
\begin{split}
\a  \psi(x)& = 1+\sum _{\substack{ z \in \tilde V\setminus \{x_1^{(1)}, \dots, x_1^{(k)} \} \,:\\ (x,z) \in \tilde E}}\tilde  r (x, z) \psi(z) +\left[\sum _{s=1}^k  \frac{\tilde r(x,x_1^{(s) })}{ \G\bigl( \g^{(s)} \bigr)} \right] \psi(y)+ \sum _{s=1}^k \frac{ c(\g^{(s)})}{\G( \g^{(s)} )}  \\
&=  1+\sum _{\substack{ z \in \tilde V\setminus \{x_1^{(1)}, \dots, x_1^{(k)} , y\} \,:\\ (x,z) \in \tilde E}}\tilde  r (x, z) \psi(z) +\bar r(x,y) \psi(y)+ \sum _{s=1}^k \tilde r (x, x_1^{(s)} ) \frac{ c(\g^{(s)})}{\G( \g^{(s)} )}  \,,
\end{split}
\end{equation}
where $\a $ equals the sum of the coefficients in the linear combinations of $\psi$ in the r.h.s.
Repeating the above procedure   as $y$ varies in $\tilde V$ we get the first equation in \eqref{febbre2bis}, the remaining equations $\psi (1_*)=0$ and $\psi(-1_*)=0 $  are trivial.

\section{Proof of Lemmata  \ref{calcolo_v} and \ref{calcolo_diff_coeff} }\label{montagnola}

\subsection{Proof of Lemma \ref{calcolo_v}}
Trivially  the expression $v= (\tilde{p}- \tilde{q} )/\bbE(J_1)$ follows from \eqref{trenino} and 
 \eqref{alba_bella1}.
Writing  the event $ \{ X_S = \pm 1_* \}  $ as
	$\bigcup_{k=0}^\infty \{ X_{J_0} = \ldots = X_{J_k} = 0_* , X_{J_{k+1}} = \pm 1_* \}$, 
	using the strong Markov property at times $J_i$ 
we get
	\begin{equation*} 
	\bbP ( X_S = \pm 1_* ) = 
	\sum_{k=0}^\infty \bbP (X_{J_1} = 0_* )^k \bbP ( X_{J_1} = \pm 1_* ) 
	= \frac{\bbP ( X_{J_1} = \pm 1_* ) }{\bbP ( X_{J_1} =  1_* ) + \bbP ( X_{J_1} = -1_* ) } \, . 
	\end{equation*}
This proves the first two identities in \eqref{alba_bella1}.
Similarly, we can write
	\begin{equation} \label{Sdecomp} 
	S = \sum_{k=0}^\infty \mathds{1} ( X_{J_0} = \ldots = X_{J_k} = 0_* , X_{J_{k+1}} \in \{ -1_* , 1_* \}  ) J_{k+1} \, .
	\end{equation}
Taking expectations  and  using the strong Markov property at times $J_i$ 
we get
	\begin{equation} \label{pepe}
	\begin{split}
	\bbE (S)  & = \bbE ( S \mathds{1} ( X_S = - 1_* ) ) + \bbE ( S \mathds{1} ( X_S = 1_* ) ) 
	\\ & 
	= \sum_{k=0}^\infty \bbP ( X_{J_1} =  0_* )^k  \bbP ( X_{J_{1}} = -1_*   ) 
	\big[ k \bbE ( J_{1} | X_{J_1} = 0_* ) +  \bbE ( J_{1} | X_{J_1} = -1_*  ) \big] \\ & \quad 
	+ 
	\sum_{k=0}^\infty \bbP ( X_{J_1} =  0_* )^k  \bbP ( X_{J_{1}} = 1_*   ) 
	\big[ k \bbE ( J_{1} | X_{J_1} = 0_* ) +  \bbE ( J_{1} | X_{J_1} = 1_*  ) \big] 
	\\ & 
	 =  \frac{ \bbE (J_1 \mathds{1} ( X_{J_1} = 0_* )) (\tilde{p} + \tilde{q})}{(\tilde{p} + \tilde{q})^2}+\frac{ \bbE (J_1 \mathds{1} ( X_{J_1} = 1_* ))}{\tilde{p} + \tilde{q}}  + \frac{ \bbE (J_1 \mathds{1} ( X_{J_1} = -1_* ) )}{\tilde{p} + \tilde{q}} 
	  \\ & 
	 = \frac{ \bbE (J_1 )}{\tilde{p} + \tilde{q}} \, ,
	\end{split}
\end{equation}
thus concluding the proof of \eqref{alba_bella1} (above we used that $\sum_{k=0} ^\infty k\g^k= \g/(1-\g)^2$ for $\g \in [0,1)$).
\subsection{Proof of Lemma \ref{calcolo_diff_coeff} } By Theorem \ref{zecchino} it holds
 	\begin{equation}\label{gelato} 
 	\s^2 = \frac{{\rm Var}(X_S^* -vS)}{\bbE (S)} 
 	= \frac{ {\rm Var}(X_S^*) + v^2 {\rm Var}(S) - 2v {\rm Cov}(X_S^*, S)}{\bbE(S)} \, . 
 	\end{equation}
Recall \eqref{alba_bella1}.
 Since $X_S^* = \mathds{1}(X_S = 1_*) - \mathds{1}(X_S = -1_*)$,  we get 
 $ {\rm Var}(X_S^*)  = 
1 - (\tilde{p} - \tilde{q})^2 /(\tilde{p} + \tilde{q})^2$.
 For the covariance observe that  $\bbE (X_S^*) \bbE (S) = \frac{(\tilde{p} - \tilde{q})\bbE(J_1)}{(\tilde{p} + \tilde{q})^2}$,  and by \eqref{pepe} it holds
	\begin{equation*}
	 \begin{split}
	\bbE (X_S^*S) & = \bbE ( S \mathds{1} ( X_S =  1_* ) ) - \bbE ( S \mathds{1} ( X_S = -1_* ) ) \\
	&
	= \frac{\bbE ( J_1 \mathds{1} ( X_{J_1} =  1_* ) ) - \bbE ( J_1 \mathds{1} ( X_{J_1} = -1_* ) )  }{\tilde{p} + \tilde{q}} 
	+ 	\frac{\bbE ( J_1 \mathds{1} ( X_{J_1} =  0_* ) ) (\tilde{p} - \tilde{q}) }{(\tilde{p} + \tilde{q})^2} 
	\, . 
	\end{split}
	\end{equation*}
Hence 
\begin{equation}\label{faticoso2}
\begin{split}
 {\rm Cov}(X_S^* , S)& =\frac{\bbE ( J_1 \mathds{1} ( X_{J_1} =  1_* ) ) - \bbE ( J_1 \mathds{1} ( X_{J_1} = -1_* ) )  }{\tilde{p} + \tilde{q}} 
	\\ &+ 	\frac{\bbE ( J_1 \mathds{1} ( X_{J_1} =  0_* ) ) (\tilde{p} - \tilde{q}) }{(\tilde{p} + \tilde{q})^2}   -\frac{(\tilde{p} - \tilde{q})\bbE(J_1)}{(\tilde{p} + \tilde{q})^2}
\,.
\end{split}
\end{equation}
\medskip

We now concentrate on   ${\rm Var}(S) = \bbE(S^2) - \bbE(S)^2 = \bbE(S^2) - \frac{ \bbE(J_1)^2}{(\tilde{p} + \tilde{q})^2}$. To compute 
  $\bbE(S^2)$ we start with the decomposition $S = S \mathds{1}(X_S = 1_* ) + S  \mathds{1}(X_S = -1_* ) $.
Recall equation \eqref{Sdecomp}. Multiplying everything by $\mathds{1}(X_S = 1_* ) $ and squaring both sides one has: 
	\[ S^2 \mathds{1}(X_S = 1_* ) 
	= \sum_{k=0}^\infty \mathds{1} ( X_{J_0} = \ldots = X_{J_k} = 0_* , X_{J_{k+1}} =1_*  ) J^2_{k+1} \, . \]
Taking the expectation both sides, and using the strong Markov property at times $J_i$  and the fact that
$ \sum_{k=1}^\infty k(k-1) \g^k = 2 \g^2/ (1-\g)^3$ for $\g \in [0,1)$, we get 
	\begin{align*}
	\bbE (S^2  & \mathds{1}(X_S = 1_* ) )   = \sum_{k=0}^\infty \bbP ( X_{J_1} = 0_* )^k  \bbP ( X_{J_1} = 1_* ) 
	\bigg[ k \bbE ( J_1^2 | X_{J_1} = 0_* ) + \bbE ( J_1^2 | X_{J_1} = 1_* ) \\  &
	+ k(k-1) \bbE ( J_1 | X_{J_1} = 0_* ) ^2 + 2k \bbE ( J_1 | X_{J_1} = 0_* ) \bbE ( J_1 | X_{J_1} = 1_* ) \bigg] 
	\\ & 
	 =\frac{\tilde{p} ( 1 - \tilde{p} - \tilde{q} ) }{(\tilde{p} + \tilde{q})^2} \bbE ( J_1^2 | X_{J_1} = 0_* ) 
	+ \frac{\tilde{p} }{\tilde{p} + \tilde{q}} \bbE ( J_1^2 | X_{J_1} = 1_* ) 
	\\
	& + \frac{ 2\tilde{p} ( 1 - \tilde{p} - \tilde{q} )^2 }{(\tilde{p} + \tilde{q})^3} \bbE ( J_1 | X_{J_1} = 0_* ) ^2 
	+  \frac{ 2\tilde{p} ( 1-\tilde{p}-\tilde{q}) \bbE ( J_1 | X_{J_1} = 0_* ) \bbE ( J_1 | X_{J_1} = 1_* ) }{(\tilde{p} + \tilde{q})^2}\\
	& =\frac{\tilde{p}  }{(\tilde{p} + \tilde{q})^2} \bbE ( J_1^2\mathds{1} (X_{J_1}=0_*) ) 
	+ \frac{1 }{\tilde{p} + \tilde{q}} \bbE ( J_1^2 \mathds{1} (X_{J_1}=1_*)  ) \\
	& + \frac{ 2\tilde{p} }{(\tilde{p} + \tilde{q})^3} \bbE ( J_1 \mathds{1}(X_{J_1} = 0_*) ) ^2 +  \frac{ 2 \bbE ( J_1 \mathds{1}( X_{J_1} = 0_*) ) \bbE ( J_1 \mathds{1}(X_{J_1} = 1_* ) )}{(\tilde{p} + \tilde{q})^2}	\, . 
	\end{align*} 
Similarly (exchanging $1_*$ with $-1_*$, and $\tilde{p}$ with $\tilde{q}$) we get
\begin{align*}
	\bbE (S^2  & \mathds{1}(X_S =- 1_* ) )   =
\\
& =\frac{\tilde{q}  }{(\tilde{p} + \tilde{q})^2} \bbE ( J_1^2\mathds{1} (X_{J_1}=0_*) ) 
	+ \frac{1 }{\tilde{p} + \tilde{q}} \bbE ( J_1^2 \mathds{1} (X_{J_1}=- 1_*)  ) \\
	& + \frac{ 2\tilde{q} }{(\tilde{p} + \tilde{q})^3} \bbE ( J_1 \mathds{1}(X_{J_1} = 0_*) ) ^2 +  \frac{ 2 \bbE ( J_1 \mathds{1}( X_{J_1} = 0_*) ) \bbE ( J_1 \mathds{1}(X_{J_1} = -1_* ) )}{(\tilde{p} + \tilde{q})^2}	\, . 
	\end{align*} 
Hence we get
	\begin{equation*}
	\bbE (S^2) = \frac{ \bbE (J_1^2 )}{\tilde{p} + \tilde{q} } 
	+ \frac{ 2  \bbE (J_1 \mathds{1} (  X_{J_1} = 0_* ))  \bbE(J_1) }{(\tilde{p} + \tilde{q})^2  } \, , 
	\end{equation*}
	and
	\begin{equation}\label{faticoso3}
	{\rm Var} (S) = \frac{ \bbE (J_1^2 )}{\tilde{p} + \tilde{q} } 
	+ \frac{ 2  \bbE (J_1 \mathds{1} (  X_{J_1} = 0_* ))  \bbE(J_1) }{(\tilde{p} + \tilde{q})^2  } -
	\frac{ \bbE(J_1)^2}{(\tilde{p} + \tilde{q})^2}\,.
	\end{equation}
	Coming back to \eqref{gelato} and 
using   Lemma \ref{calcolo_v}, the above expression for
$ {\rm Var}(X_S^*) $, 
  \eqref{faticoso2} and \eqref{faticoso3}, after some  simple  calculations  one gets \eqref{crema}.

\appendix

\section{Random time change of cumulative processes}\label{codina}

Consider a sequence $( w_i, \t_i )_{i \geq 1}$ of i.i.d.   2d vectors with values in
$\bbR \times (0,+\infty)$.  For each 
integer $ m \geq 1$ we define 
\begin{align}
&W_m:= w_1+w_2 + \cdots + w_m\,,\\
&\cT_m:= \t_1+\t_2 + \cdots+ \t_m\,.
\end{align}
We set $W_0=\cT_0=0$.
Note that 
$ \lim _{m \to \infty} \cT_m= +\infty$ a.s.  As a consequence, we can univocally define a.s. a random process $\left\{\nu(t)   \right\}_{t \in \bbR_+}$ with values in $\{0,1,2,3, \dots\}$ such that
\begin{equation}\label{warwick}
\cT_{\nu(t)}\leq t < \cT_{\nu(t)+1 }\,, \qquad t \geq 0 
\,.
\end{equation}
Note that $\nu(t)= \max \{ m\in \bbN :\, \cT_m \leq t \}$.
Finally, we define the process $Z:[0,\infty) \to \bbR$ as
\begin{equation}\label{zorro} Z_t:= W_{\nu(t) }\,.
\end{equation}
Note that $Z_0=0$. 
The resulting process $Z= (Z_t)_{t \in \bbR_+}$   is therefore obtained from the cumulative process $(W_m)_{m \geq 0}$ by  a random time change, and  generalizes the concept of (time--homogeneous)  random walk on $\bbR$. For example, if $w_i $ and $\t_i$ are independent and $\t_i$ is an exponential variable of parameter $\l$, then the process $Z$ is a continuous time random walk with exponential holding times of parameter $\l$ and 
with jump probability given by the law of $w_i$. If $\t_i \equiv 1$ for all $i$, then $Z_t= W_{\lfloor t \rfloor }$ ($\lfloor \cdot \rfloor$ denoting the integer part) and  $(Z_n)_{n \in \bbN}$ 
 is a discrete time  random walk on $\bbR$ with jump probability given by the law of $w_i$.

\smallskip

The skeleton process $X^*$ is indeed a special case of  process $Z$ (recall the definition of the random time $S$ given in \eqref{pietro}):
\begin{Lemma}\label{torlonia}
Consider a sequence $( w_i, \t_i )_{i \geq 1}$ of i.i.d.   vectors, with the same law of  the random  vector $\bigl(X^*_S,S\bigr)\in \{-1,1\} \times (0,+\infty)$   when the process   $(X_t)_{t \in \bbR_+}$ 
starts at $0_*$. We define $(Z_t )_{t \in \bbR_+}$ as the stochastic process built from  $( w_i, \t_i )_{i \geq 1}$ according to \eqref{zorro}. Then 
$(Z_t)_{t \in \bbR_+}$ has the same law of $( X_t^*)_{t \in \bbR_+} $ with $X^*_0= 0$. \end{Lemma}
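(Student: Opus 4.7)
\smallskip

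\noindent\textbf{Proof plan for Lemma \ref{torlonia}.} The plan is to build a coupling between the skeleton process $(X^*_t)_{t\in\bbR_+}$ and the random time change $(Z_t)_{t\in\bbR_+}$ by identifying the renewal structure of $X^*$ at the successive hitting times of $*$--states in neighbouring cells. Concretely, starting from $X_0=0_*$, I define recursively $S_0:=0$ and, for $k\geq 0$,
\[
S_{k+1}:=\inf\bigl\{t>S_k \,:\, X_t\in \{(X^*_{S_k}-1)_*,\,(X^*_{S_k}+1)_*\}\bigr\}\,.
\]
By construction $S_1=S$, and $(X^*_{S_k})_{k\geq 0}$ is a nearest--neighbour trajectory on $\bbZ$ with $X^*_{S_0}=0$.

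\smallskip

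\noindent\textbf{Step 1 (renewal at $*$--states).} First I verify that for every $k\geq 1$ the pair $\bigl(X^*_{S_k}-X^*_{S_{k-1}},\,S_k-S_{k-1}\bigr)$ has the same law as $(X^*_S,S)$ under the process started at $0_*$, and that these pairs are independent in $k$. This is exactly the content of the assumption in \cite{FS}[Def.~2.1] that the dynamical rules are $\cT$--invariant and that the process loses memory of its past when it arrives at any $n_*$. Indeed, at time $S_{k-1}$ the process sits at the $*$--state $(X^*_{S_{k-1}})_*$; by the loss--of--memory assumption the shifted process $(X_{S_{k-1}+t})_{t\geq 0}$ is, conditionally on $X^*_{S_{k-1}}$, independent of $\cF_{S_{k-1}}$ and distributed as the process started at $(X^*_{S_{k-1}})_*$; by $\cT$--invariance, translating back to the origin this has the same law as $(X_t)_{t\geq 0}$ started at $0_*$, so that $S_k-S_{k-1}$ and $X^*_{S_k}-X^*_{S_{k-1}}$ coincide in law with $S$ and $X^*_S$. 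Iterating gives the claimed i.i.d.\ structure.

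\smallskip

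\noindent\textbf{Step 2 (piecewise constancy of $X^*$).} Next I check that on each interval $[S_k,S_{k+1})$ the skeleton process $X^*_t$ is identically equal to $X^*_{S_k}$. By definition of $S_{k+1}$, on $[S_k,S_{k+1})$ the trajectory $X$ does not visit any $*$--state outside the single $*$--state $(X^*_{S_k})_*$; possible returns to $(X^*_{S_k})_*$ itself are allowed but do not change $\Phi(X_\tau)$ in the definition of $X^*$. Hence the last $*$--state visited up to any $t\in [S_k,S_{k+1})$ is $(X^*_{S_k})_*$, and so $X^*_t=X^*_{S_k}$.

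\smallskip

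\noindent\textbf{Step 3 (matching with the random time change).} Set $\tilde w_k:=X^*_{S_k}-X^*_{S_{k-1}}$ and $\tilde\tau_k:=S_k-S_{k-1}$. By Step~1 the sequence $(\tilde w_k,\tilde\tau_k)_{k\geq 1}$ is i.i.d.\ with the same distribution as the driving sequence $(w_i,\tau_i)_{i\geq 1}$. Writing $\widetilde W_m=X^*_{S_m}$ and $\widetilde\cT_m=S_m$, Step~2 gives, for each $t\geq 0$,
\[
X^*_t \;=\; X^*_{S_{\tilde\nu(t)}} \;=\; \widetilde W_{\tilde\nu(t)}\,,\qquad \tilde\nu(t):=\max\{m\geq 0\,:\,\widetilde\cT_m\leq t\}\,,
\]
which is precisely the construction \eqref{warwick}--\eqref{zorro} applied to $(\tilde w_k,\tilde\tau_k)$. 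Since $(\tilde w_k,\tilde\tau_k)_{k\geq 1}$ has the same law as $(w_k,\tau_k)_{k\geq 1}$, the full processes $(X^*_t)_{t\in\bbR_+}$ and $(Z_t)_{t\in\bbR_+}$ have the same law.

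\smallskip

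\noindent\textbf{Main obstacle.} The delicate point is Step~1: one must carefully exploit the \emph{loss of memory at $*$--states} postulated in \cite{FS}[Def.~2.1] together with $\cT$--invariance to get a genuine strong Markov/regenerative structure at the stopping times $S_k$, even though the underlying process $X$ need not be Markov at general times (because of possibly non--exponential holding times at non--$*$ states). Once this renewal property is in hand, Steps~2 and 3 are essentially bookkeeping.
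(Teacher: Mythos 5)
Your proof is correct and is exactly the argument the authors have in mind; they state that the proof ``is very simple and therefore omitted,'' and the regeneration--at--$*$--states decomposition you give, together with the cut--vertex observation in Step 2 that between successive $S_k$'s the only $*$--state visited is $(X^*_{S_k})_*$, is precisely what makes it simple. Your identification of Step 1 as the only point where one must genuinely invoke the hypotheses of \cite{FS}[Def.~2.1] (loss of memory at $*$--states plus $\cT$--invariance, applied at the stopping times $S_{k-1}$) is also the right thing to flag, since $X$ need not be Markov at arbitrary times.
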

The proof of the above lemma is  very simple and therefore omitted.

\smallskip

We state  our main results for $(Z_t)_{t \in \bbR_+}$:
\begin{TheoremA}\label{teo1}  The stochastic process $(Z_t)_{t \in \bbR_+} $ satisfies:

\begin{itemize}
\item[(i)] [LLN]  Assume that  $ \bbE( \t_i) < \infty$. Then almost 
 surely $ \lim_{t \to \infty} \frac{Z_t}{t} = v:= \frac{ \bbE (w_i) }{\bbE (\t_i)}$.

\item[(ii)] [Invariance Principle]   Assume that $\bbE( w_i^2), \bbE( \t_i^2) < \infty$.  
Given $n\in\{1,2, \dots\}$ define the rescaled process 
$$ B^{(n)} _t := \frac{1}{\sqrt{n}}  \left\{ Z_{ nt} - vnt  \right\} 
$$ in the Skohorod space  $ D( \bbR_+; \bbR)$.  Then as $n\to \infty$ the rescaled process $B^{(n)}$ weakly converges to a Brownian motion on $\bbR$ with diffusion constant
\begin{equation}\label{diffondo}
\s^2:=\frac{ {\rm Var}(w_1-v\tau_1)}{ \bbE(\t_1) }\,.
\end{equation}

\end{itemize}
\end{TheoremA}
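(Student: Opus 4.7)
\medskip

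\noindent\textbf{Plan of proof.} For part (i), the key observation is that $\nu(t)\to\infty$ almost surely as $t\to\infty$: indeed, by the strong law of large numbers applied to the i.i.d.\ sequence $(\tau_i)$ with $\bbE(\tau_i)\in(0,\infty)$, we have $\cT_m/m\to\bbE(\tau_1)$ a.s., hence $\cT_m\to\infty$ a.s. The inequalities \eqref{warwick} then give the sandwich
\[
\frac{\cT_{\nu(t)}}{\nu(t)}\;\le\;\frac{t}{\nu(t)}\;<\;\frac{\cT_{\nu(t)+1}}{\nu(t)+1}\cdot\frac{\nu(t)+1}{\nu(t)},
\]
from which $t/\nu(t)\to\bbE(\tau_1)$ a.s., equivalently $\nu(t)/t\to 1/\bbE(\tau_1)$ a.s. Assuming also $\bbE(|w_1|)<\infty$ (automatically true in the skeleton application, where $|w_i|=1$), the SLLN gives $W_m/m\to\bbE(w_1)$ a.s., and composing with the divergent index $\nu(t)$ yields $W_{\nu(t)}/\nu(t)\to\bbE(w_1)$ a.s. Multiplying the two limits gives $Z_t/t=(W_{\nu(t)}/\nu(t))\cdot(\nu(t)/t)\to v$.

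\medskip

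\noindent For part (ii) the plan is to reduce to the classical Donsker invariance principle applied to a centered random walk, composed with a deterministic time change. Define the centered increments $\tilde w_i:=w_i-v\tau_i$, which satisfy $\bbE(\tilde w_i)=0$ and $\mathrm{Var}(\tilde w_i)=\mathrm{Var}(w_1-v\tau_1)$, and set $\tilde W_m:=\sum_{i=1}^m\tilde w_i=W_m-v\cT_m$. The starting identity is the decomposition
\[
Z_t-vt\;=\;W_{\nu(t)}-v\cT_{\nu(t)}+v\bigl(\cT_{\nu(t)}-t\bigr)\;=\;\tilde W_{\nu(t)}+v\bigl(\cT_{\nu(t)}-t\bigr).
\]
After rescaling, I will therefore study separately the two processes $A^{(n)}_t:=\tilde W_{\nu(nt)}/\sqrt n$ and $R^{(n)}_t:=v(\cT_{\nu(nt)}-nt)/\sqrt n$.

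\medskip

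\noindent For the main term $A^{(n)}$, Donsker's theorem applied to the i.i.d.\ sequence $(\tilde w_i)$ (which has finite variance since $\bbE(w_1^2),\bbE(\tau_1^2)<\infty$) gives that $(\tilde W_{\lfloor ns\rfloor}/\sqrt n)_{s\ge 0}$ converges weakly in $D(\bbR_+;\bbR)$ to $\sqrt{\mathrm{Var}(w_1-v\tau_1)}\,B_s$. By the argument of part (i), $\nu(nt)/n\to t/\bbE(\tau_1)$ a.s.\ uniformly on compact intervals in $t$ (uniformity follows from pointwise convergence plus monotonicity of $\nu$ and continuity of the limit, a standard Dini-type argument). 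A random time change theorem for Skorohod convergence (see e.g.\ Billingsley or Whitt) then yields
\[
A^{(n)}_t\;\Longrightarrow\;\sqrt{\mathrm{Var}(w_1-v\tau_1)}\;B_{t/\bbE(\tau_1)},
\]
and the right-hand side is a Brownian motion on $\bbR$ with diffusion constant $\sigma^2=\mathrm{Var}(w_1-v\tau_1)/\bbE(\tau_1)$, matching \eqref{diffondo}.

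\medskip

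\noindent It remains to control the remainder $R^{(n)}$. By \eqref{warwick}, $|\cT_{\nu(nt)}-nt|\le\tau_{\nu(nt)+1}$, so on any compact time interval $[0,T]$ we have
\[
\sup_{t\in[0,T]}\bigl|R^{(n)}_t\bigr|\;\le\;\frac{|v|}{\sqrt n}\max_{1\le i\le \nu(nT)+1}\tau_i.
\]
Since $\nu(nT)\le nT/\bbE(\tau_1)+o_P(n)$ and $\bbE(\tau_i^2)<\infty$, the classical fact that $\max_{1\le i\le Cn}\tau_i/\sqrt n\to 0$ in probability (a consequence of finite second moment via a union bound and Chebyshev) shows that $R^{(n)}\to 0$ uniformly on compacts in probability. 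Combining with the weak convergence of $A^{(n)}$ via Slutsky in $D([0,T];\bbR)$ yields the claimed invariance principle.

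\medskip

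\noindent The delicate point, and where I expect the main work to lie, is the justification of the random time change step: one must verify that the time-change map $(\xi,\phi)\mapsto \xi\circ\phi$ is continuous (at pairs with continuous $\phi$) in the Skorohod topology, and that the joint convergence of $(\tilde W_{\lfloor n\cdot\rfloor}/\sqrt n,\nu(n\cdot)/n)$ to $(\sqrt{\mathrm{Var}(\tilde w_1)}B,\cdot/\bbE(\tau_1))$ holds in the product Skorohod topology. Joint convergence is automatic here because the second coordinate converges a.s.\ to a deterministic limit, so Slutsky-type arguments apply once the marginal convergence of each coordinate is established.
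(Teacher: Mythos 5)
Your proof is correct and follows essentially the same route as the paper: the same sandwich argument via \eqref{warwick} for the LLN, the same algebraic decomposition of $Z_t-vt$ into a centered walk evaluated at $\nu(t)$ plus the overshoot remainder $v(\cT_{\nu(t)}-t)$, the same control of the overshoot via $|\cT_{\nu(t)}-t|\le\tau_{\nu(t)+1}$ and the finite-second-moment fact $\max_{i\le Cn}\tau_i/\sqrt n\to 0$, and the same random-time-change theorem combined with a.s.\ (hence joint) convergence of $\nu(n\cdot)/n$ to the deterministic path $t/\bbE(\tau_1)$. The only cosmetic difference is that you apply a one-dimensional Donsker theorem directly to $\tilde W_m=W_m-v\cT_m$, whereas the paper keeps the two centered walks $A^{(n)}$ and $D^{(n)}$ separate, invokes a bivariate invariance principle for the pair, and only at the very end forms the linear combination $A^{(n)}-vD^{(n)}$; since $\bbE(w_1)=v\,\bbE(\tau_1)$, this combination is algebraically identical to your $\tilde W_{\lfloor n\cdot\rfloor}/\sqrt n$, so the two writings are equivalent and yours is marginally leaner.
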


\subsection{Proof of the Law of Large Numbers in Theorem \ref{teo1}} The proof is rather standard, we give it for completeness as  short. Since $\lim _{m \to \infty} \cT_m= \infty$ a.s., we have $\lim_{t \to \infty} \nu(t)= \infty$ a.s. Hence from the LLN for $(W_n)_{n \geq 1}$ and $(\cT_n )_{ n \geq 1}$  we deduce that
$\lim _{t\to \infty} \frac{ W_{\nu(t) }}{\nu(t)} =\bbE (w_i)$  and  $ \lim _{t \to \infty} \frac{\cT_{\nu(t)}}{ \nu(t) }  = \bbE (\t_i)$ a.s.  From the last limit and the bounds (recall \eqref{warwick})
\begin{equation*}
\frac{\cT_{\nu(t)}}{ \nu(t) } \leq \frac{t}{\nu(t)} < \frac{\cT_{\nu(t)+1 }}{\nu(t)+1}\cdot \frac{ \nu(t)+1}{\nu(t)}  
\end{equation*} 
we get $ \lim _{t \to \infty} \nu(t)/t = 1/ \bbE (\t_i)$ a.s.  Since $Z_t/t= [W_{\nu(t) }/\nu(t)]\cdot[\nu(t)/t]$ we get the thesis.

\subsection{Proof of the Invariance Principle in Theorem \ref{teo1}}. For each $n\geq 1$ and $t\in \bbR_+$  let 
\[
 A^{(n)}_t := \frac{1}{\sqrt{n} } \left\{ W_{\lfloor nt \rfloor}- \bbE( w_i) nt \right\}\,,\qquad  D^{(n)}_t:= \frac{1}{\sqrt{n} }  \left\{ \cT_{\lfloor nt \rfloor}- \bbE( \t_i) nt \right\}\, .
\]
Then, since $v= \bbE(w_i)/ \bbE(\t_i)$,  the following identity holds:
  \begin{equation} \label{siriano2}
 \begin{split}  B^{(n)}_{t}& =\frac{1}{\sqrt{n}} \left\{
 W_{\nu(nt)}- \bbE(w_1) \nu(nt) 
 \right\}+\frac{1}{\sqrt{n}} \left\{
 \bbE(w_1) \nu(nt) - v \cT_{\nu(nt)}
 \right\}
 + \frac{v}{\sqrt{n}} \left\{
  \cT_{\nu(nt)}- nt 
  \right\} \\
  &
 =
 A^{(n)}_{\nu(nt)/n }- v D^{(n)} _{\nu(nt)/n}
 + \frac{v}{\sqrt{n}} \left\{
  \cT_{\nu(nt)}- nt 
  \right\} \, . 
  \end{split}
 \end{equation}
 \begin{Lemma}\label{pecorino}
 Given $\e>0$ and $s>0$ we have
 \begin{equation}\label{stellina}
\lim_{n\to \infty} \bbP \left( \sup_{0\leq t \leq s}  \left| \frac{1}{\sqrt{n}} 
 \left\{\cT_{ \nu( n t ) } -nt 
   \right\}
\right|>\e \right)=0\,.
 \end{equation}
 \end{Lemma}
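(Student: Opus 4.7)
The plan is to bound the supremum pointwise by the single ``overshoot'' increment and then argue that even the worst $\tau_i$ seen up to time $ns$ is $o(\sqrt n)$. By definition of $\nu$ in \eqref{warwick}, for every $t \ge 0$ we have $\cT_{\nu(nt)} \le nt < \cT_{\nu(nt)+1}$, hence
\[
\bigl|\cT_{\nu(nt)} - nt\bigr| \le \cT_{\nu(nt)+1} - \cT_{\nu(nt)} = \tau_{\nu(nt)+1}.
\]
Since $\nu(\cdot)$ is nondecreasing, for all $t\in[0,s]$ the index $\nu(nt)+1$ lies in $\{1,\dots,\nu(ns)+1\}$, so
\[
\sup_{0 \le t \le s} \frac{|\cT_{\nu(nt)} - nt|}{\sqrt n} \;\le\; \frac{1}{\sqrt n}\max_{1 \le i \le \nu(ns)+1} \tau_i.
\]

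The next step is to replace the random upper bound $\nu(ns)+1$ by a deterministic one. From part (i) (already proved above), $\nu(ns)/n \to s/\bbE(\tau_1)$ almost surely, so for $M := 2s/\bbE(\tau_1)$ the event $E_n := \{\nu(ns)+1 \le Mn\}$ satisfies $\bbP(E_n) \to 1$. It therefore suffices to show
\[
\lim_{n\to\infty}\bbP\Bigl(\max_{1\le i \le Mn} \tau_i > \e\sqrt n\Bigr) = 0.
\]
By a union bound this probability is at most $Mn\, \bbP(\tau_1 > \e\sqrt n)$. Here I use the standard fact that the finite second moment assumption $\bbE(\tau_1^2)<\infty$ implies $x^2\bbP(\tau_1>x)\to 0$ as $x\to\infty$ (apply Markov's inequality to $\tau_1^2\mathds{1}(\tau_1>x)$ and dominated convergence). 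Hence
\[
Mn\,\bbP(\tau_1>\e\sqrt n) \;=\; \frac{M}{\e^2}\,(\e\sqrt n)^2\bbP(\tau_1>\e\sqrt n) \;\longrightarrow\; 0,
\]
which combined with $\bbP(E_n^c)\to 0$ yields \eqref{stellina}.

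I expect the only subtlety to be the observation that $\bbE(\tau_1^2)<\infty$ gives $n\bbP(\tau_1>\e\sqrt n)\to 0$ (a classical but easy-to-overlook improvement over the naive Chebyshev bound $n\bbP(\tau_1>\e\sqrt n)\le \bbE(\tau_1^2)/\e^2$, which does not decay). Everything else reduces to the elementary ``overshoot'' inequality and the already-established law of large numbers for $\nu(nt)$.
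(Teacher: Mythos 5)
Your proof is correct and follows essentially the same route as the paper's: bound the overshoot $|\cT_{\nu(nt)} - nt| \le \tau_{\nu(nt)+1}$, use monotonicity of $\nu$ and the already-established law of large numbers for $\nu$ to pass to a deterministic index bound $Mn$, then show $\max_{1\le i\le Mn}\tau_i/\sqrt{n}\to 0$ in probability via the fact that $\bbE(\tau_1^2)<\infty$ forces $N\,\bbP(\tau_1>a\sqrt{N})\to 0$. The only (cosmetic) difference is in that last step: you invoke a direct union bound, whereas the paper computes $\bbP(\max_{i\le N}X_i\le a\sqrt N)=[1-\bbP(X_1>a\sqrt N)]^N\sim e^{-N\bbP(X_1>a\sqrt N)}$ and concludes from the same moment estimate; both are standard and equivalent, with your union bound being marginally more elementary.
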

 \begin{proof} 
 As proven in the previous subsection $\lim _{t \to \infty} \nu(t)/t= \theta:= 1/ \bbE(\t_i) $ a.s. Hence, fixed $\delta >0$, it holds $\bbP( E_n^c) \leq \d$ for $n$ large enough as we assume, where $E_n$ is the event $\{\nu( n s ) \leq 2\theta n s \}$. Trivially the event $E_n$ implies that 
$ \nu(nt)\leq  2\theta n s $ for each $t \in [0,s]$. Now we observe that, due to \eqref{warwick},   $\cT_{\nu(nt)}\leq nt < \cT_{\nu(nt)+1}$, hence $0\leq nt -\cT_{\nu(nt)} \leq \t_{\nu(nt)+1}$. 
Due to the above considerations,
calling $F_n$ the event in 
\eqref{stellina} we conclude that
\begin{equation}\label{luna}
\bbP(F_n) 
 \leq 
 \bbP(E_n^c)+ \bbP \left( \max _{ 1 \leq i \leq 2\theta n s+1 } \tau_i  > \e \sqrt{n}\right)
\end{equation}
Since $\bbP(E_n^c) \leq \d$ eventually,  by the arbitrariness of $\delta$ we only need to show that the last probability in \eqref{luna} goes to zero as $n \to \infty$. This is a general  fact. Let $(X_i)_{i \geq 1}$ be i.i.d. positive random variables with $ \bbE(X_i^2) < \infty$ (in our case $X_i= \t_i$). Then, given $a >0$, 
\begin{multline*}
\bbP(\max _{1\leq i \leq N} X_i \leq a \sqrt{N})=
 \bbP( X_1\leq    a \sqrt{N})^N =[ 1- \bbP( X_1>   a \sqrt{N})]^N \\
 =  e^{N \ln[ 1- \bbP( X_1>   a \sqrt{N})] } \sim e^{- N \bbP( X_1>   a\sqrt{ N})}\,. $$
 \end{multline*}
 Note that 
 the last equivalence holds since $\lim_{N \to \infty}\bbP( X_1>   a\sqrt{ N})=0$. At this point, in order to  prove that  $\max _{1\leq i \leq N} X_i / \sqrt{N}$ weakly converges to zero we only need to show   that $\lim _{N \to \infty} N \bbP( X_1>   a\sqrt{ N}) =0$, or equivalently that
 $ \lim _{t \to \infty} t^2 P(X_1>t)=0$. This follows form the fact that $\bbE(X_i^2) < \infty$ (see 
 Exercise 3.5, page 15 of \cite{Du}).
 \end{proof}
Due to Lemma  \ref{pecorino} we can disregard  the last addendum in \eqref{siriano2}   in order to prove the invariance principle for $ B^{(n)}$.  
Let us now consider the random path $\Gamma ^{(n)}$  in $D(\bbR_+; \bbR\times \bbR \times \bbR_+)$ defined as
$$  \Gamma^{(n)}: \bbR_+ \ni t \rightarrow 
\left( A^{(n)}_t ,D^{(n)}_t ,  \nu( n t )/n\right) \in  \bbR\times \bbR \times \bbR_+\,. $$
\begin{Lemma}
The random path $\G^{(n)}$ weakly converges to the random path
$$ \left(\,\left( B_1(t), B_2(t) , \theta t \right) \,\right)_{t \in \bbR_+}
 $$ 
 where $ \left( \,\left( B_1(t), B_2(t)\right ) \,\right)_{t \in \bbR_+}
$ is a zero mean bidimensional Brownian motion   such that  \begin{equation*}
 \begin{cases}
 {\rm Var}\bigl(B_1(1) \bigr) ={\rm Var} (w_1) \,,\\
 {\rm Var} \bigl( B_2(1) \bigr)={\rm Var}(\t_1)  \,,\\
  {\rm Cov}\bigl( B_1(1), B_2(1)  \bigr)={\rm Cov}(w_1,\t_1). \end{cases}
 \end{equation*}
\end{Lemma}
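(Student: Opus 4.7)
The plan is to derive the joint weak convergence $\G^{(n)}\Rightarrow (B_1,B_2,\theta\,\mathrm{id})$ from two largely independent ingredients: a multidimensional functional CLT that handles the first two coordinates, and a uniform strengthening of the pointwise limit $\nu(nt)/n\to\theta t$ (with $\theta:=1/\bbE(\t_1)$) already obtained in the proof of the LLN. The two are then glued together by a Slutsky-type perturbation argument that exploits the continuity and determinism of the limit path of the third coordinate.

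First I would apply the multidimensional functional central limit theorem (Donsker's invariance principle, see e.g.\ \cite{B}) to the i.i.d.\ centered 2d sequence $(w_i-\bbE(w_1),\t_i-\bbE(\t_1))$, for which the hypothesis $\bbE(w_1^2),\bbE(\t_1^2)<\infty$ supplies the required second-moment condition. This directly yields
\begin{equation*}
(A^{(n)},D^{(n)})\Longrightarrow (B_1,B_2)\quad\text{in } D(\bbR_+;\bbR^2),
\end{equation*}
with $(B_1,B_2)$ a mean-zero bidimensional Brownian motion whose covariance matrix at time $1$ is that of $(w_1,\t_1)$, matching the three identities in the statement; the replacement of $\lfloor nt\rfloor$ by $nt$ inside $A^{(n)}_t$ and $D^{(n)}_t$ introduces only an $O(1/\sqrt n)$ discrepancy, uniform on compact time intervals.

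Next I would upgrade the a.s.\ pointwise convergence $\nu(nt)/n\to \theta t$, already derived in the LLN proof, to a.s.\ uniform convergence on every compact interval $[0,s]$. This is a Dini--P\'olya argument: the functions $t\mapsto\nu(nt)/n$ are non-decreasing and the limit $t\mapsto\theta t$ is continuous and non-decreasing on $[0,s]$, so pointwise convergence automatically upgrades to uniform convergence. Concretely, fix $\e>0$, pick a finite grid of mesh $<\e/(2\theta)$, apply the pointwise limit at each grid point, and sandwich the difference at intermediate times using monotonicity.

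Finally I would assemble the two ingredients. Using the continuous embedding $\eta\mapsto(\eta,\theta\,\mathrm{id})$ from $D(\bbR_+;\bbR^2)$ into $D(\bbR_+;\bbR^2\times\bbR_+)$ and the continuous mapping theorem, Step 1 gives
\begin{equation*}
(A^{(n)},D^{(n)},\theta\,\mathrm{id})\Longrightarrow (B_1,B_2,\theta\,\mathrm{id}).
\end{equation*}
Since the third coordinate $\nu(n\cdot)/n$ differs from $\theta\,\mathrm{id}$ by a quantity that converges a.s.\ (hence in probability) to $0$ uniformly on compacts, a standard perturbation lemma for the Skorohod topology (cf.\ \cite{B}) lets us replace $\theta\,\mathrm{id}$ by $\nu(n\cdot)/n$ without altering the weak limit, thereby giving the claim. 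The main obstacle is precisely this last step: the Skorohod topology on a product space is not the product of Skorohod topologies, so passing from a deterministic to a random third coordinate needs justification. The continuity of the limit path $t\mapsto\theta t$ is what defuses the issue, since at continuous points the $J_1$ topology locally reduces to uniform-on-compacts convergence, under which additive perturbations behave well.
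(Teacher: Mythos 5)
Your proof is correct and follows essentially the same route as the paper's: Donsker's invariance principle for the pair $(A^{(n)},D^{(n)})$, uniform-on-compacts convergence of $\nu(n\cdot)/n$ to $\theta\,\mathrm{id}$ obtained by the same monotonicity sandwich argument you call Dini--P\'olya, and a Slutsky-type joint-convergence result to attach the deterministic third coordinate to the Brownian limit of the first two (the paper invokes Theorem~3.9 of \cite{B} for this last step). The only minor difference is that you upgrade the third-coordinate convergence to almost sure uniform convergence while the paper settles for convergence in probability, which is all Billingsley's theorem needs; the underlying estimate is identical, so the two proofs are interchangeable.
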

\begin{proof} 
We first  show that the random path  $ \bigl(\nu(n t )/n\bigr)_{t \in \bbR_+}$ weakly converges to the deterministic path $(\theta t )_{t \in \bbR_+}$. To this aim it is enough to show the convergence in probability w.r.t. the uniform distance on finite intervals (this implies the convergence in the Skohorod topology).
In particular, we claim that   for any $s , \delta >0$ it holds
\begin{equation}\label{colazione}
\lim _{n \to \infty} \bbP\left( \sup_{0 \leq t \leq s} \Big| \frac{  \nu(nt)}{n} - \theta t \Big| > \delta 
\right)=0\,.
\end{equation}
By monotonicity $\nu(n u)\leq \nu (n t) \leq \nu(n v)$ if $ u \leq t \leq v$, thus implying that 
\[ \Big| \frac{  \nu(nt)}{n} - \theta t \Big|\leq 
\max \left\{ \Big| \frac{  \nu(nu )}{n} - \theta u \Big|+\theta |u-t|\,, \, 
\Big| \frac{  \nu(nv )}{n} - \theta v\Big|+ \theta |v-t|\right\}\,.
\]
At this point the convergence  \eqref{colazione}  follows easily  from the 
 convergence in probability for fixed times, i.e. from the fact that $\nu(n r)/n \to \theta r$ a.s. for each fixed $r$.

Since by the invariance principle for sums of independent vectors it holds 
$$  \Big( A^{(n)}_t ,D^{(n)}_t  \Big ) _{t \in \bbR_+}\Longrightarrow \left( \, \bigl( B_1(t), B_2(t)\bigr ) \,\right)_{t \in \bbR_+}
$$
and since we have just proved that  $ \bigl(\nu(nt)/n\bigr)_{t \in \bbR_+} \Longrightarrow\bigl( \theta t \bigr)_{t \in \bbR_+}$ (where the r.h.s. is a deterministic path), the thesis follows from Theorem 3.9 of \cite{B}.

\end{proof}
We can now conclude the proof of the invariance principle. 
Since the map $ \bbR_+ \ni t \to A^{(n)}_{ \nu(nt)/n } \in \bbR$ if simply the composition $f \circ g (t)$ where $f(u)= A^{(n)}_{u } $ and $g(t)= \nu(nt)/n$, and similarly for  $ \bbR_+ \ni t \to D^{(n)}_{ \nu(nt)/n } \in \bbR$, we can simply combine the above lemma with   the Lemma in \cite{B}[page 151]  to derive the weak convergence
\begin{equation}
\Big( \,A^{(n)}_{ \nu(nt)/n } - v D^{(n)}_{ \nu(nt)/n } \,\Big) _{t \in \bbR_+} \; \Longrightarrow 
\; \Big( \,B_1(\theta t) -v B_2(\theta t) \, \Big)_{t \in \bbR_+}\,.
\end{equation}
Since the last process is a Brownian motion with diffusion constant \eqref{diffondo}, combining the above convergence with \eqref{siriano2} and Lemma \ref{pecorino} we get the invariance principle for $ B^{(n)}$.


\medskip

\medskip

\noindent {\bf Acknowledgements.}   V. Silvestri thanks the Department of Mathematics in University ``La Sapienza''
for the hospitality and  acknowledges the support of the UK Engineering and Physical Sciences Research Council (EPSRC) grant EP/H023348/1 for the University of Cambridge Centre for Doctoral Training, the Cambridge Centre for Analysis.

\medskip

\medskip

\end{document}